\numberwithin{equation}{section}
\numberwithin{figure}{section}
\theoremstyle{plain}
\newtheorem{thm}{\protect\theoremname}
  \theoremstyle{plain}
  \newtheorem{lem}[thm]{\protect\lemmaname}
  \theoremstyle{remark}
  \newtheorem{rem}[thm]{\protect\remarkname}
  \theoremstyle{definition}
  \newtheorem{defn}[thm]{\protect\definitionname}
  \theoremstyle{plain}
  \newtheorem{prop}[thm]{\protect\propositionname}
\newcounter{cprop}[section]
\numberwithin{equation}{section}
\numberwithin{cprop}{section}
 \newcounter{thm}
\numberwithin{thm}{section}
\newcommand{\mcB}{\mathcal{B}}
\newcommand{\E}{\mathbb{E}}
\renewcommand{\d}{\delta}
\newcommand{\z}{\zeta}
\renewcommand{\and}{\quad\textrm{ and }\quad}
\renewcommand{\P}{\mathbb{P}}
\renewcommand{\a}{\alpha}
\renewcommand{\b}{\beta}
\renewcommand{\o}{\omega}
\renewcommand{\O}{\Omega}
\renewcommand{\t}{\theta}
\renewcommand{\div}{\textnormal{div}}
\newcommand{\supp}{\;\textrm{supp}\;}
\newcommand{\mcT}{\mathcal{T}}
\newcommand{\mcG}{\mathcal{G}}
\newcommand{\mcO}{\mathcal{O}}
\newcommand{\mcF}{\mathcal{F}}
\newcommand{\mcM}{\mathcal{M}}
\newcommand{\F}{\mathcal F}
\newcommand{\mcH}{\mathcal H}
\renewcommand{\>}{\rangle}
\newcommand{\D}{\Delta}
\newcommand{\g}{\gamma}
\newcommand{\vp}{\varphi}
\newcommand{\ve}{\varepsilon}
\newcommand{\norm}[1]{\left\lVert#1\right\rVert}
\newcommand{\abs}[1]{\lvert#1\rvert}
\newcommand{\lrabs}[1]{\left\lvert#1\right\rvert}
\newcommand{\Bcal}{{\mathcal B}}
\newcommand{\Kcal}{{\mathcal K}}
\newcommand{\Ocal}{{\mathcal O}}
\renewcommand{\a}{\alpha}
\renewcommand{\b}{\beta}
\newcommand{\eps}{{\varepsilon}}
\newcommand{\R}{\mathbbm{R}}
\newcommand{\N}{\mathbbm{N}}
\renewcommand{\P}{\mathbbm{P}}
\newcommand{\esssup}{\operatorname{ess\;sup}}
\renewcommand{\div}{\operatorname{div}}
\newcommand{\Lip}{\operatorname{Lip}}
\newcommand{\sgn}{\operatorname{sgn}}
\renewcommand{\le}{\leq}
\renewcommand{\ge}{\geq}
\newcommand{\BIGOP}[1]{\mathop{\mathchoice%
{\raise-0.22em\hbox{\huge $#1$}}%
{\raise-0.05em\hbox{\Large $#1$}}{\hbox{\large $#1$}}{#1}}}
\newcommand{\BIGboxplus}{\mathop{\mathchoice%
{\raise-0.35em\hbox{\huge $\boxplus$}}%
{\raise-0.15em\hbox{\Large $\boxplus$}}{\hbox{\large $\boxplus$}}{\boxplus}}}
\def\leq {\leqslant}
\def\geq {\geqslant}
\date{\today}
\subjclass[2010]{35K55, 35K92, 60H15; 37L15, 45E10}
  \providecommand{\definitionname}{Definition}
  \providecommand{\lemmaname}{Lemma}
  \providecommand{\propositionname}{Proposition}
  \providecommand{\remarkname}{Remark}
\providecommand{\theoremname}{Theorem}
\begin{document}

\title[Ergodicity and local limits for stochastic $p$-Laplace equations]{Ergodicity and local limits for stochastic local and nonlocal \texorpdfstring{\MakeLowercase{$p$}}{\MakeLowercase{p}}-Laplace
equations}
\begin{abstract}
Ergodicity for local and nonlocal stochastic singular $p$-Laplace
equations is proven, without restriction on the spatial dimension
and for all $p\in[1,2)$. This generalizes previous results from {[}Gess,
T\"{o}lle; J. Math. Pures Appl., 2014{]}, {[}Liu, T\"{o}lle; Electron.
Commun. Probab., 2011{]}, {[}Liu; J. Evol. Equations, 2009{]}. In
particular, the results include the multivalued case of the stochastic
(nonlocal) total variation flow, which solves an open problem raised
in {[}Barbu, Da Prato, R\"{o}ckner; SIAM J. Math. Anal., 2009{]}.
Moreover, under appropriate rescaling, the convergence of the unique
invariant measure for the nonlocal stochastic $p$-Laplace equation
to the unique invariant measure of the local stochastic $p$-Laplace
equation is proven.
\end{abstract}

\author{Benjamin Gess}

\address{Max-Planck Institute for Mathematics in the Sciences\\
Inselstra\ss e 22\\
04103 Leipzig\\
Germany}

\email{bgess@mis.mpg.de}

\author{Jonas M. T\"{o}lle }

\address{Aalto University\\
School of Science\\
Department of Mathematics and Systems Analysis\\
PO Box 11100\\
FI-00076 Aalto\\
Finland}

\email{jonas.tolle@aalto.fi}

\keywords{Stochastic variational inequality, nonlocal stochastic partial differential
equations, singular-degenerate SPDE, stochastic $p$-Laplace equation,
ergodicity}

\thanks{J.M.T. gratefully acknowledges travel funds granted by the CRC 701
``Spectral Structures and Topological Methods in Mathematics'' of
the DFG (German Research Foundation)}
\maketitle

\section{Introduction}

We consider stochastic nonlocal singular $p$-Laplace equations of
the type 
\begin{align}
dX_{t} & \in\left(\int_{\mcO}J(\cdot-\xi)|X_{t}(\xi)-X_{t}(\cdot)|^{p-2}(X_{t}(\xi)-X_{t}(\cdot))\,d\xi\right)dt+BdW_{t}\label{eq:intro_nonlocal_plp}\\
X_{0} & =x_{0},\nonumber 
\end{align}
and stochastic (local) singular $p$-Laplace equations of the type
\begin{align}
dX_{t} & \in\div\left(|\nabla X_{t}|^{p-2}\nabla X_{t}\right)\,dt+BdW_{t}\label{eq:intro_plp}\\
X_{0} & =x_{0},\nonumber 
\end{align}
with zero Neumann boundary conditions on bounded, convex domains $\mcO\subseteq\R^{d}$
with smooth boundary $\partial\mcO$, mean zero initial conditions
$x_{0}\in H:=L_{\operatorname{av}}^{2}(\Ocal)$ and $p\in[1,2)$.
Here, $W$ is a cylindrical Wiener process on $H$, $B\in L_{2}(H)$
is a symmetric Hilbert-Schmidt operator and $J:\R^{d}\to\R$ is a
nonnegative, continuous, radial kernel with compact support and $J(0)>0$.
In particular, this includes the multi-valued case of the stochastic
total variation flow ($p=1$) recently studied in \cite{BDPR09-4,BR13}.
We note that for $p=1$ the equations \eqref{eq:intro_nonlocal_plp}
and \eqref{eq:intro_plp} become evolution inclusions.

Our results are twofold: First, we prove the existence and uniqueness
of an invariant probability measure to \eqref{eq:intro_nonlocal_plp}
and \eqref{eq:intro_plp}. Second, the convergence of the respective
invariant probability measures for \eqref{eq:intro_nonlocal_plp}
to the invariant probability measure for \eqref{eq:intro_plp} is
shown, under appropriate rescaling of the kernel $J$.

Uniqueness of invariant probability measures to \eqref{eq:intro_plp}
has been previously considered in \cite{GT11,LT11,L09-1}. The difficulties
arising in proving uniqueness of invariant probability measures for
\eqref{eq:intro_plp} are due to the singular nature of the drift
and the resulting low regularity properties of the solutions. More
precisely, the energy space associated to \eqref{eq:intro_plp} is
given by $W_{\operatorname{av}}^{1,p}$, which is compactly embedded
into $L_{\operatorname{av}}^{2}$ only if 
\begin{equation}
d<\frac{2p}{2-p}.\label{eq:intro_dim_restr}
\end{equation}
The validity of this embedding is crucial for previously established
methods and thus \eqref{eq:intro_dim_restr} had to be assumed in
all of the works \cite{GT11,LT11,L09-1}, which led to stringent restrictions
on the spatial dimension $d$, e.g.~$d\le2$ for $p\approx1$. For
the case of nonlocal stochastic $p$-Laplace equations, the situation
is even worse, since the energy associated to \eqref{eq:intro_nonlocal_plp}
is given by\footnote{\label{fn:1}For the sake of notational simplicity we drop normalization
constants in the introduction.} 
\[
\vp(u)=\frac{1}{2p}\int_{\mcO}\int_{\mcO}J(\z-\xi)|u(\xi)-u(\z)|^{p}\,d\xi d\z
\]
which is equivalent to the $L^{p}$ norm. Hence, based on this no
compactness and thus tightness for the laws of the solutions in $L_{\operatorname{av}}^{2}$
can be expected. These obstacles are overcome in the present work,
by establishing a cascade of energy inequalities for $L^{m}$ norms
of the solutions to \eqref{eq:intro_nonlocal_plp} and \eqref{eq:intro_plp}
for all $m\ge2$. These new estimates are then used in order to prove
concentration of mass of the solutions around zero, which in turn
allows the application of results developed in \cite{KPS10}, based
on coupling techniques. In conclusion, we prove the existence and
uniqueness of an invariant probability measure for \eqref{eq:intro_nonlocal_plp}
and \eqref{eq:intro_plp} without any restriction on the dimension
$d\in\N$ and for all $p\in[1,2)$. In particular, this solves the
open question raised in \cite{BDPR09-4} of uniqueness of invariant
measures for the stochastic total variation flow. 

In the second part of this paper, we consider the convergence of invariant
probability measures under rescaling of the kernel $J$ in \eqref{eq:intro_nonlocal_plp}.
More precisely, we consider
\begin{align}
dX_{t}^{\ve} & =\left(\int_{\mcO}J^{\ve}\left(\cdot-\xi\right)|X_{t}^{\ve}(\xi)-X_{t}^{\ve}(\cdot)|^{p-2}(X_{t}^{\ve}(\xi)-X_{t}^{\ve}(\cdot))d\xi\right)dt+BdW_{t},\label{eq:intro_nonlocal_conv}
\end{align}
where $p\in(1,2)$ and\textsuperscript{\ref{fn:1}}
\[
J^{\ve}(\xi)=\frac{1}{\ve^{d+p}}J\left(\frac{\xi}{\ve}\right),\quad\xi\in\R^{d}
\]
and prove that the corresponding invariant measures $\mu^{\ve}$ converge
weakly$^{*}$ to the invariant measure $\mu$ corresponding to \eqref{eq:intro_plp}.
Somewhat related questions of convergence of invariant measures of
\eqref{eq:intro_plp} with respect to perturbations in $p$ have been
considered in \cite{CiotToe2,CT12}, under stringent restrictions
on the spatial dimension, i.e.~assuming \eqref{eq:intro_dim_restr}.
Again, such dimensional restrictions are crucial to the approach developed
in \cite{CiotToe2,CT12}, since the argument relies on tightness of
the respective sequence of invariant probability measures $\mu^{p}$,
which in turn is verified using the compactness of the embedding $W^{1,p}\hookrightarrow L^{2}$. 

In the setting of local limits for \eqref{eq:intro_nonlocal_conv}
for general dimension $d$, this leads to two fundamental problems:
First, no concentration of the invariant probability measures on some
uniform, compactly embedded space can be expected. Second, as observed
in \cite{GT15}, only weak convergence of the solutions to \eqref{eq:intro_nonlocal_plp}
to the solution to \eqref{eq:intro_plp} is available, that is, $X_{t}^{\ve}\rightharpoonup X_{t}$
in $H$ for $\ve\to0$. Hence, we do not have the convergence of the
associated Markovian semigroups $P_{t}^{\ve}F$ for all $F\in\Lip_{b}(H)$,
a crucial ingredient in previously developed methods such as in \cite{CiotToe2,CT12}.
These problems are overcome in the present work and we prove that
$\mu^{\ve}$ converges to $\mu$ in the topology of weak$^{*}$ convergence
of measures on $L_{\operatorname{av}}^{p}$, without any restriction
on the spatial dimension $d$.

We note that, in general, the invariant measures $\mu^{\ve}$ to \eqref{eq:intro_nonlocal_conv}
will only be concentrated on the domains of the corresponding energy
functionals\textsuperscript{\ref{fn:1}}
\[
\vp^{\ve}(u):=\frac{1}{2p}\int_{\Ocal}\int_{\Ocal}J^{\ve}\left(\zeta-\xi\right)\lrabs{u(\xi)-u(\zeta)}^{p}\,d\xi d\zeta,
\]
rather than on $W_{\operatorname{av}}^{1,p}$ as for \eqref{eq:intro_plp}.
Roughly speaking, one has $p\vp^{\ve}(u)\uparrow\|u\|_{W^{1,p}}^{p}$.
In this sense, at least \emph{asymptotic} concentration on $W_{\operatorname{av}}^{1,p}$
is still satisfied. This is reflected in our proof by working with
\emph{asymptotic} tightness rather than tightness. Non-compactness
of $W_{\operatorname{av}}^{1,p}$ in $L_{\operatorname{av}}^{2}$
is dealt with by considering weak$^{*}$ convergence of measures on
$L_{\operatorname{av}}^{p}$ rather than on $L_{\operatorname{av}}^{2}$.
However, this leads to the further difficulty of working with two
topologies: weak$^{*}$ convergence of $\mu^{\ve}$ on $L_{\operatorname{av}}^{p}$
and weak convergence of $X_{t}^{\ve}$ on $L_{\operatorname{av}}^{2}$.
These issues are resolved by a careful treatment in Section \ref{secc:convergence_measures}
below.

For simplicity, we restrict to the case of zero Neumann boundary conditions.
In the nonlocal form \eqref{eq:intro_nonlocal_plp} the choice of
zero Neumann boundary conditions is reflected by the choice of the
domain of integration as $\Ocal$, rather than, for example, $\Ocal+\operatorname{supp}J$.
Under appropriate rescaling of $J$ it is known (cf.~\cite{AVMRT10})
that the solutions to the nonlocal deterministic equations, that is
\eqref{eq:intro_nonlocal_plp} with $B\equiv0$, converge to the solution
of the local $p$-Laplace equation with zero Neumann boundary conditions,
that is, to \eqref{eq:intro_plp} with $B\equiv0$. The nonlocal analogue
to homogeneous Dirichlet boundary conditions involves a penalizing
term (cf.~\cite{AMRT09-2,AVMRT10}) which can be viewed as a nonlocal
analogue of the boundary trace. While we focus on Neumann boundary
conditions, we expect that the case of Dirichlet boundary conditions
can be treated by similar methods.

The case of a degenerate drift, that is, $p\ge2$ in \eqref{eq:intro_nonlocal_plp},
\eqref{eq:intro_plp} can be treated by rather different and somewhat
more simple methods. More precisely, for $p\ge2$ the dissipativity
method (cf.~e.g.~\cite[Theorem 3.7]{G13}) can be applied to obtain
the ergodicity and strong mixing property for both \eqref{eq:intro_nonlocal_plp}
and \eqref{eq:intro_plp}. Concerning the convergence of the invariant
measures, in contrast to the singular case $p<2$, in the degenerate
case the embedding $W_{\operatorname{av}}^{1,p}\hookrightarrow L_{\operatorname{av}}^{2}$
is always compact. Hence, this compactness may be used to deduce (asymptotic)
tightness of the invariant measures without any restriction on the
dimension, thus allowing for a rather direct argument similar to the
one given in \cite{CT12}.

The problems of existence, uniqueness and stability with respect to
parameters of invariant measures for SPDE are classical and a review
of the available results would exceed the scope of this paper. Thus,
we shall restrict to mention some exemplary works in this direction
and the references therein. A typical approach to the uniqueness of
invariant measures is given by the Doob-Khasminskii Theorem \cite{D48,DPZ96,H60}
and its more recent generalization \cite{HM06,HM08}. In both cases,
this strategy requires smoothing properties of the associated Markov
semigroup and its irreducibility, which have been successfully verified
for many semilinear SPDE with degenerate noise (e.g. \cite{HM08,FGRT15,Romito:2011fc,Albeverio:2012fh}
and the references therein). The route followed in this paper is different
and relies on a contractivity ($e$-property) of the Markov semigroup,
rather than on a smoothing property (asymptotic strong Feller property),
as suggested in the abstract framework of \cite{KPS10}. Some details
on the relation of the asymptotic strong Feller property and the $e$-property
can be found in \cite{Kapica:2011hj,SzWo12,J13}. We note that this
type of argument shows resemblance to arguments used to prove ergodicity
of stochastic scalar conservation laws \cite{DV10,B16,S16}. Concerning
the stability of invariant measures for stochastic Navier-Stokes and
stochastic Burgers equations with respect to parameters we refer to
\cite{K04,Shirikyan:2007ch,Kuksin:2008go} and the references therein.

A detailed treatment of deterministic nonlocal $p$-Laplace equations
may be found in \cite{AMRT08,AMRT09,AVMRT10,AMRT11} and the references
therein. Decay estimates and extinction results for solutions of deterministic
nonlocal $p$-Laplace equations have been considered e.g. in \cite{BelaudDiaz2010,FerreiraRossi2015,HerreroVazquez1981,IgnatPinascoRossiAntolin2014,IgnatRossi2009,Porzio2009,Porzio2011}.
Relying on non-degeneracy assumptions on the noise, gradient estimates,
Harnack inequalities and exponential convergence rates for stochastic
$p$-Laplace equations and stochastic porous media equations have
been obtained in \cite{W15,W15-2,L09-1} and the references therein.
A stochastic variational inequality (SVI) approach to stochastic fast
diffusion equations has been developed in \cite{GR15} and their ergodicity
has been considered in \cite{BDP10,LW08,Liu11,GT11,W15,W15-2,LT11}.
The case of stochastic degenerate $p$-Laplace equations, that is
for $p>2$, has been investigated in \cite{MZ10,BDP06,PR07,L09,W13,W15,W15-2,GessRoe2014}
and ergodicity for stochastic porous media equations has been obtained
in \cite{GZ14,K06,O11,W07,BBDPR06,DPRRW06,BDP06,PR07,W13,L09,DPR04-2}.

\subsection{Structure of the paper}

In Section \ref{sec:ergodicity_nonlocal} ergodicity for the stochastic
nonlocal $p$-Laplace equation is proven. The case of the stochastic
local $p$-Laplace equation is treated in Section \ref{sec:ergodicity_local}.
Convergence of the solutions of the nonlocal stochastic $p$-Laplace
equation to its local version is shown in Section \ref{sec:to_local_sol}.
The respective convergence of invariant probability measures is shown
in Section \ref{secc:convergence_measures}. For notations see Appendix
\ref{sec:Notation}.

\section{Ergodicity for stochastic nonlocal \texorpdfstring{$p$}{p}-Laplace
equations\label{sec:ergodicity_nonlocal}}

In this section we derive a stochastic variational inequality (SVI)
formulation for stochastic singular nonlocal $p$-Laplace equations
with homogeneous Neumann boundary condition of the type
\begin{align}
dX_{t} & \in\left(\int_{\mcO}J(\cdot-\xi)|X_{t}(\xi)-X_{t}(\cdot)|^{p-2}(X_{t}(\xi)-X_{t}(\cdot))d\xi\right)dt+BdW_{t}\label{eq:nonlocal_plp-1-1}\\
X_{0} & =x_{0}\in L^{2}(\O,\mcF_{0};L_{\operatorname{av}}^{2}(\mcO)),\nonumber 
\end{align}
where $p\in[1,2)$ and $\mcO$ is a bounded, smooth domain in $\R^{d}$.
The kernel $J:\R^{d}\to\R$ is supposed to be a nonnegative, continuous,
radial function with compact support, $J(0)>0$ and $\int_{\R^{d}}J(z)\,dz=1$.
Furthermore, $W$ is a cylindrical Wiener process on $H$ and $B\in L_{2}(H)$
symmetric with $H=L_{\operatorname{av}}^{2}(\mcO)$. Hence, 
\[
W_{t}^{B}:=BW_{t}
\]
is a trace-class Wiener process in $H$. We further assume that there
is an orthonormal basis $e_{k}$ of $H$ such that 
\begin{equation}
\sum_{k=1}^{\infty}\|Be_{k}\|_{\infty}^{2}<\infty,\label{eq:noise_bound}
\end{equation}
cf.~e.g.~\cite{BDPR09-4} where similar conditions on \textbf{$B$
}have been used in the case of the stochastic total variation flow.
For $u\in L^{p}(\mcO)$ we set 
\[
\vp(u):=\frac{1}{2p}\int_{\mcO}\int_{\mcO}J(\z-\xi)|u(\xi)-u(\z)|^{p}\,d\xi d\z
\]
and obtain, if $p>1$, 
\[
A(u):=-\partial_{L^{2}}\vp(u)=\int_{\mcO}J(\cdot-\xi)|u(\xi)-u(\cdot)|^{p-2}(u(\xi)-u(\cdot))\,d\xi
\]
and, if $p=1$, 
\begin{align*}
A(u):= & -\partial_{L^{2}}\vp(u)\\
= & \Big\{\int_{\mcO}J(\cdot-\xi)\eta(\xi,\cdot)\,d\xi:\,\|\eta\|_{L^{\infty}}\le1,\,\eta(\xi,\z)=-\eta(\z,\xi)\text{ and }\\
 & \hskip20ptJ(\z-\xi)\eta(\xi,\z)\in J(\z-\xi)\sgn(u(\xi)-u(\z))\text{ for a.e. }(\xi,\z)\in\mcO\times\mcO\Big\},
\end{align*}
where $\partial_{L^{2}}\vp$ denotes the $L^{2}$ subgradient of $\vp$
restricted to $L^{2}$. We note that $A$ defines a continuous, monotone
operator on $H$, satisfying 
\begin{equation}
\|A(u)\|_{H}^{2}\lesssim1+\|u\|_{H}^{2}\quad\forall u\in H.\label{eq:A_growth}
\end{equation}
Hence, we can write \eqref{eq:nonlocal_plp-1-1} in its relaxed form
\[
dX_{t}\in-\partial\vp(X_{t})dt+BdW_{t}.
\]
Existence and uniqueness of an SVI solution $X=X^{x_{0}}\in L^{2}(\O;C([0,T];L^{2}(\mcO)))$
to \eqref{eq:nonlocal_plp-1-1} has been proven in \cite[Section 4]{GT15}
and
\begin{equation}
\E\|X_{t}^{x}-X_{t}^{y}\|_{L^{2}}^{2}\lesssim\|x-y\|_{L^{2}}^{2}.\label{eq:l2-contraction-2}
\end{equation}
Since $\int_{\mcO}\eta d\z=0$ for all $\eta\in A(u)$, $u\in H$,
from the construction of SVI solutions (see \cite[Definition 2.1]{GT15}
for the definition) presented in \cite[Section 4]{GT15} it easily
follows that the average value is preserved, that is, 
\begin{equation}
X_{t}\in L_{\operatorname{av}}^{2}(\mcO)\quad\forall t\ge0,\P\text{-a.s.}\label{eq:av-zero}
\end{equation}
if $x_{0}\in L^{2}(\O;L_{\operatorname{av}}^{2}(\mcO))$. Furthermore,
analogously to \cite[proof of Proposition 5.2]{GT11}, it follows
that 
\[
P_{t}F(x):=\E F(X_{t}^{x})\quad\text{for }F\in\mcB_{b}(H)
\]
defines a Feller semigroup on $\mcB_{b}(H)$.  As a main result in
this Section we obtain
\begin{thm}
\label{thm:nonlocal_ergodicity}There is a unique invariant measure
$\mu$ for $P_{t}$ satisfying\footnote{Cf.~Appendix \ref{sec:Notation} for notation.\label{fn:2}}
\[
0\in\operatorname{supp}(\mu)\subseteq\mcT(\mu)
\]
and 
\begin{equation}
\int_{H}\vp(x)d\mu(x)\lesssim\|B\|_{L_{^{2}}(H)}^{2}.\label{eq:ipm_mass_bound}
\end{equation}
\end{thm}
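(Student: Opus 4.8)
The plan is to verify the hypotheses of the abstract ergodicity criterion of \cite{KPS10}: for a Feller semigroup enjoying the \emph{$e$-property}, the existence of a point $z$ at which the time-averaged transition probabilities charge every neighbourhood with positive lower density forces both existence and uniqueness of the invariant measure and places $z$ in its support. I would take $z=0$. Recall $P_t$ is Feller. The $e$-property is immediate from the $L^2$-contraction \eqref{eq:l2-contraction-2}: for $F\in\Lip_b(H)$,
$|P_tF(x)-P_tF(y)|\le\Lip(F)\,\E\|X_t^x-X_t^y\|_{L^2}\le\Lip(F)(\E\|X_t^x-X_t^y\|_{L^2}^2)^{1/2}\lesssim\Lip(F)\|x-y\|_{L^2}$,
which is equicontinuous uniformly in $t\ge0$, so the $e$-property holds with room to spare. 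It then remains to produce (i) the energy bound \eqref{eq:ipm_mass_bound} and (ii) the concentration of the dynamics near $0$.

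For the energy estimate I would apply Itô's formula (in the SVI sense of \cite[Def.~2.1]{GT15}, which also covers the multivalued case $p=1$) to $\|X_t\|_{L^2}^2$. Since $\vp$ is positively homogeneous of degree $p$, Euler's relation gives $\langle\eta,u\rangle=p\vp(u)$ for every $\eta\in\partial\vp(u)$ (for $p=1$ this is the $1$-homogeneity identity $\langle\eta,u\rangle=\vp(u)$), whence $\frac{d}{dt}\E\|X_t\|_{L^2}^2=-2p\,\E\vp(X_t)+\|B\|_{L_2(H)}^2$. Integrating yields the uniform-in-$T$ bound $\frac1T\int_0^T\E\vp(X_t^x)\,dt\le\frac{\|x\|_{L^2}^2}{2pT}+\frac{1}{2p}\|B\|_{L_2(H)}^2$. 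Evaluating the same identity in the stationary regime (under $\mu$, using a localizing stopping time and monotone convergence to justify finiteness) the time-derivative vanishes and one reads off $\int_H\vp\,d\mu=\frac{1}{2p}\|B\|_{L_2(H)}^2\lesssim\|B\|_{L_2(H)}^2$, which is \eqref{eq:ipm_mass_bound}.

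The heart of the matter — and the main obstacle, since $\vp$ is merely equivalent to the $L^p$-norm and provides no compactness in $H$ — is the concentration near $0$: I must show $\liminf_{T\to\infty}\frac1T\int_0^T P_t(x,B_{L^2}(0,\eps))\,dt>0$ for every $\eps>0$ and every $x$. The trouble is that the energy bound only controls $\E\vp(X_t)\approx\E\|X_t\|_{L^p}^p$ on average, and small $L^p$-mass does not prevent large $L^2$-mass concentrated on thin spikes. This is exactly what the cascade of higher $L^m$-estimates is designed to rule out. Applying Itô's formula to $\|X_t\|_{L^m}^m$, the drift term is bounded above, after symmetrization and the elementary inequality $(b-a)(|b|^{m-2}b-|a|^{m-2}a)\gtrsim|b-a|^m$, by a negative dissipation comparable to $-\int\!\int J(\z-\xi)|X_t(\xi)-X_t(\z)|^{m+p-2}\,d\xi d\z\lesssim-\|X_t\|_{L^{m+p-2}}^{m+p-2}$, while the Itô correction is controlled by $\big(\sum_k\|Be_k\|_\infty^2\big)\|X_t\|_{L^{m-2}}^{m-2}$ via \eqref{eq:noise_bound}. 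Since $m+p-2<m$, iterating over $m\ge2$ yields uniform-in-$T$ control of $\frac1T\int_0^T\E\|X_t\|_{L^r}^r\,dt$ for every $r\ge2$.

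With the cascade in hand spikes are excluded: on the event where $\|X_t\|_{L^m}\lesssim\|X_t\|_{L^2}$, interpolation between $L^p$ and $L^m$ forces $\|X_t\|_{L^p}\gtrsim\|X_t\|_{L^2}$, hence $\vp(X_t)\gtrsim\|X_t\|_{L^2}^p$, and since $p<2$ this is a super-dissipative bound giving finite-time-extinction-type decay of $\|X_t\|_{L^2}^2$ toward $0$. Combining this strong pull toward $0$ — which first confines the dynamics, via the $L^2$-energy balance, to a fixed ball $B_{L^2}(0,R)$ a definite fraction of the time — with the nondegeneracy of the noise on short time intervals, I would deduce the required lower bound at $0$ for every $\eps>0$. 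Feeding the $e$-property and this lower bound into \cite{KPS10} then yields the unique invariant measure $\mu$ with $0\in\operatorname{supp}(\mu)$, the inclusion $\operatorname{supp}(\mu)\subseteq\mcT(\mu)$ following from the corresponding structural statement in that framework. The step I expect to be most delicate is precisely this passage from the averaged, in-expectation $L^m$-dissipation to a genuine pathwise small-ball lower bound at $0$: one must convert a time-averaged estimate into a recurrence statement while handling the possibly degenerate operator $B$, and it is here that the cascade of $L^m$-bounds, rather than any compact embedding, does the essential work.
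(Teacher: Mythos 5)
Your skeleton coincides with the paper's: uniqueness via \cite[Theorem 1]{KPS10} using the Feller property, the $e$-property from the $L^2$-contraction, a cascade of $L^m$-estimates via It\^{o}'s formula with dissipation $\|X_r\|_{p+m-2}^{p+m-2}$ and noise term controlled through \eqref{eq:noise_bound}, and the bound \eqref{eq:ipm_mass_bound} from the $p$-homogeneity identity $2(A(v),v)_H=-p\vp(v)$ (your stationary-regime variant of this last step is workable modulo moment/localization care; the paper instead integrates the estimate started at $x=0$ and uses $0\in\mcT(\mu)$ together with lower semicontinuity of $\vp$ under weak$^{*}$ convergence, which avoids any a priori moments of $\mu$).

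The genuine gap is exactly the step you flag as ``most delicate'' and then do not carry out: the lower bound $\liminf_{T\to\infty}Q_T(x,B_{\d}(0))>0$. Two parts of your sketch would fail. First, ``nondegeneracy of the noise on short time intervals'' is neither assumed ($B$ is an arbitrary symmetric Hilbert--Schmidt operator satisfying \eqref{eq:noise_bound}, possibly highly degenerate) nor what is needed; what is needed is the opposite property, namely that the noise is \emph{small} with positive probability on a fixed window, so that the stochastic solution shadows the \emph{deterministic} flow --- it is the deterministic flow, not the stochastic one, that actually approaches $0$. Second, your ``super-dissipative'' interpolation bound is asserted ``on the event where $\|X_t\|_{L^m}\lesssim\|X_t\|_{L^2}$''; this is a random, time-dependent event, and one cannot run a differential inequality for $\|X_t\|_{L^2}^2$ conditioned on it. The paper's resolution is concrete and avoids both problems: take $m_0=4-p$, so that for the noiseless equation the cascade reads $\frac{1}{m_0}\|u_t\|_{m_0}^{m_0}\le\frac{1}{m_0}\|x_0\|_{m_0}^{m_0}-c\int_0^t\|u_r\|_2^2\,dr$, whence monotonicity gives the decay $\|u_t\|_2^2\le Ct^{-1}\|x_0\|_{m_0}^{m_0}$ (Lemma \ref{lem:det_decay}); take $m_1=m_0+2-p$, so that the stochastic cascade shows $Q_T(x,B_R^{m_0}(0))\ge1-\ve$ for $x\in L_{\operatorname{av}}^{m_1}$ (Lemma \ref{lem:concentration}); prove the small-noise comparison $\inf_{x\in B}\P(\sup_{t\in[0,T]}\|X_t^{x}-u_t^{x}\|_H^2\le\eta)>0$, valid for any trace-class noise (Lemma \ref{lem:compare_det}); and combine the three through the Markov property, $Q_T(x,B_{\d}(0))\ge\g\liminf_T Q_T(x,B_R^{m_0}(0))$ (Lemma \ref{lem:mass_at_0}). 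Note also that the cascade only applies to initial data with matching integrability, so this chain only yields the lower bound for $x$ in the dense set $L_{\operatorname{av}}^{m_1}$; the extension to all $x\in H$, which \cite{KPS10} requires, is then done via the contraction property (in the paper, the pathwise bound of Lemma \ref{lem:e-property}) --- a step your proposal omits entirely, though it is easily repaired.
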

We first need to derive suitable a-priori bounds on general $L^{m}$
norms of the solutions. 
\begin{lem}
\label{lem:general_lp_bound}Let $x_{0}\in L^{m}(\O,\mcF_{0};L_{\operatorname{av}}^{m}(\mcO))$,
$m\in[2,\infty)$ and let $X$ be the corresponding SVI solution to
\eqref{eq:nonlocal_plp-1-1}. Then there are $c=c(m),C=C(m)>0$ such
that
\begin{align}
\frac{1}{m}\E\|X_{t}\|_{m}^{m}+c\E\int_{0}^{t}\|X_{r}\|_{p+m-2}^{p+m-2}dr & \le\frac{1}{m}\E\|x_{0}\|_{m}^{m}+tC\quad\forall t\ge0.\label{eq:nonlocal_energy_bound}
\end{align}
If $B\equiv0$ then we can choose $C=0$. 
\end{lem}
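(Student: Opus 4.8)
The plan is to obtain \eqref{eq:nonlocal_energy_bound} by applying It\^o's formula to the functional $\Psi(u)=\tfrac1m\|u\|_m^m=\tfrac1m\int_\mcO|u|^m\,dx$ along the solution. For $m\ge2$ the map $s\mapsto|s|^m$ is $C^2$, with $\Psi'(u)=|u|^{m-2}u$ and $\Psi''(u)[h,h]=(m-1)\int_\mcO|u|^{m-2}h^2\,dx$, so the formula is structurally available. Because the SVI solution has low regularity (and is multivalued when $p=1$), I would not apply It\^o directly but rather derive the bound for the smooth approximations used in the construction of $X$ in \cite{GT15} (Galerkin truncations or Yosida regularizations $\varphi_\lambda$), for which the It\^o formula is rigorous, and then pass to the limit. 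Since both terms on the left-hand side of \eqref{eq:nonlocal_energy_bound} are convex and lower semicontinuous, Fatou's lemma together with the available (weak) convergence lets the inequality survive the limit provided $x_0^n\to x_0$ in $L^m$; this also explains why one only expects the stated inequality, not an identity.

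For the approximations, It\^o, taking expectations, and localizing the stochastic integral by a stopping time so that it has zero expectation (justified by the linear growth \eqref{eq:A_growth} and the resulting moment bounds) yields the balance
\[
\tfrac1m\E\|X_t\|_m^m+\E\int_0^t D(X_r)\,dr=\tfrac1m\E\|x_0\|_m^m+\tfrac{m-1}{2}\E\int_0^t\sum_k\int_\mcO|X_r|^{m-2}(Be_k)^2\,dx\,dr,
\]
where, after the symmetrization $\xi\leftrightarrow\z$ using that $J$ is radial,
\[
D(u)=\tfrac12\int_\mcO\int_\mcO J(\z-\xi)|u(\xi)-u(\z)|^{p-2}(u(\xi)-u(\z))\big(|u(\xi)|^{m-2}u(\xi)-|u(\z)|^{m-2}u(\z)\big)\,d\xi\,d\z\ge0.
\]
The noise term is controlled via \eqref{eq:noise_bound}: it is bounded by $\tfrac{m-1}{2}\big(\sum_k\|Be_k\|_\infty^2\big)\|X_r\|_{m-2}^{m-2}$ (a constant when $m=2$), and since $m-2<p+m-2$, H\"older on $\mcO$ followed by Young's inequality gives $\|X_r\|_{m-2}^{m-2}\le\eps\|X_r\|_{p+m-2}^{p+m-2}+C_\eps$. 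When $B\equiv0$ this whole term is absent, which is exactly what forces $C=0$ in that case.

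The analytic heart is the coercivity bound $D(u)\gtrsim\|u\|_{p+m-2}^{p+m-2}$ for mean-zero $u$, \emph{with no additive constant}. I would prove it in two steps. First, the elementary pointwise inequality
\[
|a-b|^{p-2}(a-b)\big(|a|^{m-2}a-|b|^{m-2}b\big)\ge c_{p,m}\,\big|\psi(a)-\psi(b)\big|^{p},\qquad\psi(t):=|t|^{\frac{p+m-2}{p}-1}t,
\]
which reduces to $|a-b|^p$ for $m=2$ and to the standard $(a-b)(|a|^{m-2}a-|b|^{m-2}b)\ge\tfrac{4(m-1)}{m^2}\big||a|^{m/2-1}a-|b|^{m/2-1}b\big|^2$ for $p=2$, gives $D(u)\ge c_{p,m}\,p\,\varphi(\psi(u))$, using $|\psi(t)|^p=|t|^{p+m-2}$. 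Second, the nonlocal Poincar\'e inequality for $J$ (available since $J$ is continuous with $J(0)>0$ and $\mcO$ is connected, cf.~\cite{AVMRT10}) yields $\varphi(\psi(u))\gtrsim\|\psi(u)-\overline{\psi(u)}\|_p^p$; combined with the constraint $\int_\mcO u=0$ from \eqref{eq:av-zero}, which forces $\overline{\psi(u)}$ to be controlled by the oscillation of $\psi(u)$, this upgrades to $\varphi(\psi(u))\gtrsim\|\psi(u)\|_p^p=\|u\|_{p+m-2}^{p+m-2}$.

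Inserting the coercivity and noise bounds into the It\^o balance, absorbing $\eps\E\int_0^t\|X_r\|_{p+m-2}^{p+m-2}\,dr$ into the coercive term, and passing to the approximation limit then gives \eqref{eq:nonlocal_energy_bound}. I expect the main obstacle to be the clean coercivity estimate without additive constant — specifically the nonlinear nonlocal Poincar\'e step controlling the mean $\overline{\psi(u)}$ under the mean-zero constraint on $u$ — and, on the technical side, the rigorous justification of the It\^o formula for the low-regularity SVI solution; the case $p=1$ is the most delicate, where one likely argues by an additional approximation $p\downarrow1$ while keeping $c(m),C(m)$ uniform.
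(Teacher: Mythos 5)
Your skeleton coincides with the paper's: It\^{o}'s formula for the $L^{m}$-functional along the Moreau--Yosida approximations $X^{\delta}$ from the construction in \cite[Section 4]{GT15}, symmetrization of the nonlocal drift via \cite[Lemma 6.5]{AVMRT10} to produce the nonnegative dissipation term, control of the noise term by \eqref{eq:noise_bound} plus H\"{o}lder/Young with $m-2<p+m-2$, absorption, and Fatou in the limit. The one genuine divergence is the coercivity step, and there your route is both harder than necessary and left incomplete. You lower-bound the integrand by $c\,\lvert\psi(a)-\psi(b)\rvert^{p}$ with $\psi(t)=t^{[(p+m-2)/p]}$ and then need a nonlocal Poincar\'{e} inequality for $\psi(u)$ at exponent $p$; but only $u$, not $\psi(u)$, has zero average, so you must additionally control $\overline{\psi(u)}$ --- exactly the step you flag as the main obstacle and only gesture at. The paper sidesteps this entirely: from \cite{DPRRW06}, $(a-b)(a^{[m-1]}-b^{[m-1]})\ge c\lvert a-b\rvert^{m}$, hence
\[
\phi^{\delta}(a-b)(a^{[m-1]}-b^{[m-1]})\ \ge\ c\,\phi^{\delta}(a-b)(a-b)\lvert a-b\rvert^{m-2}\ \ge\ c\lvert a-b\rvert^{p+m-2}-C\delta\lvert a-b\rvert^{m-2},
\]
and then the nonlocal Poincar\'{e} inequality \cite[Proposition 6.19]{AVMRT10} is applied at exponent $p+m-2$ \emph{directly to $u$}, which is mean-zero by \eqref{eq:av-zero}. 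So no ``nonlinear Poincar\'{e} with mean control'' is needed. Your variant is likely salvageable (for $p+m-2>1$, a homogeneity-plus-compactness argument gives $\lvert\overline{\psi(u)}\rvert\lesssim\|\psi(u)-\overline{\psi(u)}\|_{p}$ under $\int_{\mcO}u=0$), but as written that sub-step is a gap, and it is an avoidable one.

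Two technical points where the paper does work your sketch skips. First, It\^{o}'s formula cannot be applied to $u\mapsto\frac{1}{m}\|u\|_{m}^{m}$ on $H=L^{2}$ even for the approximating equations: the regularization needed is of the \emph{functional}, not only of the drift (for $m>2$ the functional is not even finite on $H$). The paper truncates $\frac{1}{m}\lvert\cdot\rvert^{m}$ to a quadratically growing $\iota^{\alpha}$ and mollifies ($\theta^{\beta}$) to get a $C^{2}$ functional on $L^{2}$, and runs a separate Step 1 (sign of the drift term, Gronwall, Fatou) solely to establish $\E\|X_{t}\|_{m}^{m}<\infty$, which is then what justifies the dominated-convergence passages ($\beta\to0$, $\alpha\to0$) in the main estimate; your stopping-time remark does not substitute for this. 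Second, since the approximating drift is $\phi^{\delta}$ rather than $\phi$, all pointwise inequalities must be proved for $\phi^{\delta}$ (using $\operatorname{sgn}\phi^{\delta}(a-b)=\operatorname{sgn}(a-b)$ and $\phi^{\delta}(a)a\ge\psi^{\delta}(a)\ge c\psi(a)-C\delta$), which produces $O(\delta)$ error terms of the type $C\delta\,\E\|X_{r}^{\delta}\|_{m}^{m}$ that are removed using the Step 1 bound before letting $\delta\to0$. Relatedly, no separate limit $p\downarrow1$ is needed for the total variation case: the Moreau--Yosida scheme treats all $p\in[1,2)$ uniformly, with $c(m),C(m)$ unaffected.
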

\begin{proof}
For notational convenience let 
\begin{align*}
\psi(\xi):= & \frac{1}{p}|\xi|^{p}\\
\phi(\xi):= & \partial\psi(\xi)=|\xi|^{p-2}\xi,\quad\xi\in\R^{d}.
\end{align*}
\textit{Step 1:} We start by proving that for $x\in L^{m}(\O,\mcF_{0};L_{\operatorname{av}}^{m}(\mcO))$,
we have $\E\|X_{t}\|_{L^{m}}^{m}<\infty$ for all $t\ge0.$ 

We aim to apply It\^{o}'s formula for $\frac{1}{m}\|\cdot\|_{m}^{m}$.
To do so, we need to consider appropriate approximations. Let 
\[
\iota^{\a}(r):=\frac{1}{m}\begin{cases}
|r|^{m} & \text{if }|r|\le\frac{1}{\a}\\
|\frac{1}{\a}|^{m}+\frac{m}{\a^{m-1}}(r-\frac{1}{\a})+\frac{m(m-1)}{2\a{}^{m-2}}(r-\frac{1}{\a})^{2} & \text{if }r\ge\frac{1}{\a}\\
|\frac{1}{\a}|^{m}+\frac{m}{\a^{m-1}}(r+\frac{1}{\a})+\frac{m(m-1)}{2\a{}^{m-2}}(r+\frac{1}{\a})^{2} & \text{if }r\le-\frac{1}{\a}.
\end{cases}
\]
and observe, for $\a$ small enough,
\begin{align}
(\iota^{\a})''(r): & =\begin{cases}
(m-1)|r|^{m-2} & \text{if }|r|\le\frac{1}{\a}\\
\frac{m-1}{|\a|^{m-2}} & \text{otherwise. }
\end{cases}\label{eq:iota_der}\\
 & \lesssim1+\iota^{\a}(r).\nonumber 
\end{align}
Let $\t^{\b}$ be a standard Dirac sequence on $\R^{d}$. For $v\in L^{2}(\mcO)$
we set\textsuperscript{\ref{fn:2}}
\begin{align*}
\eta^{\a}(v): & =\int_{\mcO}\iota^{\a}\left(v\right)d\z\\
\eta^{\a,\b}(v): & =\int_{\mcO}\iota^{\a}\left(\t^{\b}\ast\bar{v}\right)d\z
\end{align*}
and observe that $\eta^{\a,\b}\in C^{2}(L^{2})$ with uniformly continuous
derivatives on bounded sets. We recall that the SVI solution $X$
to \eqref{eq:nonlocal_plp-1-1} has been constructed in \cite[Section 4]{GT15}
as a limit in $L^{2}(\O;C([0,T];H))$ of (strong) solutions $X^{\d}$
corresponding to the approximating SPDE
\begin{align*}
dX_{t}^{\d} & =\left(\int_{\mcO}J(\cdot-\xi)\phi^{\d}(X_{t}^{\d}(\xi)-X_{t}^{\d}(\cdot))d\xi\right)dt+BdW_{t},
\end{align*}
where $\psi^{\d}$ is the Moreau-Yosida approximation (cf.~e.g.~\cite{B93})
of $\psi(\cdot)=\frac{1}{p}|\cdot|^{p}$ and $\phi^{\d}:=\partial\psi^{\d}$.
Hence, by It\^{o}'s formula
\begin{align}
\E\eta^{\a,\b}(X_{t}^{\d})= & \E\eta^{\a,\b}(x_{0})+\E\int_{0}^{t}\int_{\mcO}(\iota^{\a})'(\t^{\b}\ast X_{r}^{\d})(\t^{\b}\ast A^{\d}(X_{r}^{\d}))d\z dr\label{eq:approx}\\
 & +\sum_{k=1}^{\infty}\E\int_{0}^{t}\int_{\mcO}(\iota^{\a})''(\t^{\b}\ast X_{r}^{\d})(\t^{\b}\ast Be_{k})^{2}d\z dr.\nonumber 
\end{align}
Using (for $\a>0$ fixed) 
\begin{align*}
(\iota^{\a})'(r) & \lesssim1+|r|\\
(\iota^{\a})''(r) & \lesssim1\quad\forall r\in\R,
\end{align*}
and dominated convergence, we may let $\b\to0$ in \eqref{eq:approx}
to obtain that
\begin{align}
\E\eta^{\a}(X_{t}^{\d})\le & \E\eta^{\a}(x_{0})+\E\int_{0}^{t}\int_{\mcO}(\iota^{\a})'(X_{r}^{\d})A^{\d}(X_{r}^{\d})d\z dr\label{eq:approx_2}\\
 & +\sum_{k=1}^{\infty}\E\int_{0}^{t}\int_{\mcO}(\iota^{\a})''(X_{r}^{\d})(Be_{k})^{2}d\z dr.\nonumber 
\end{align}
We note that using \cite[Lemma 6.5]{AVMRT10}, monotonicity of $(\iota^{\a})'$
and $\sgn(\phi^{\d}(a-b))=\sgn(a-b)$ we have that
\begin{align*}
 & \int_{\mcO}(\iota^{\a})'(v)A^{\d}(v)d\z\\
 & =\int_{\mcO}\int_{\mcO}J(\z-\xi)\phi^{\d}(v(\xi)-v(\z))(\iota^{\a})'(v(\z))d\xi d\z\\
 & =-\frac{1}{2}\int_{\mcO}\int_{\mcO}J(\z-\xi)\phi^{\d}(v(\xi)-v(\z))((\iota^{\a})'(v(\xi))-(\iota^{\a})'(v(\z)))d\xi d\z\\
 & \le0,
\end{align*}
for all $v\in H$. Hence, using \eqref{eq:iota_der} we observe that
\begin{align*}
\E\eta^{\a}(X_{t}^{\d})\le & \E\eta^{\a}(x_{0})+C\E\int_{0}^{t}(1+\eta^{\a}(X_{r}^{\d}))dr.
\end{align*}
Gronwall's Lemma then implies that 
\begin{align*}
\E\eta^{\a}(X_{t}^{\d}) & \lesssim\E\eta^{\a}(x_{0})+1\\
 & \le\frac{1}{m}\E\|x_{0}\|_{m}^{m}+1.
\end{align*}
Hence, taking $\a\to0$ and using Fatou's Lemma we obtain that
\begin{align}
\frac{1}{m}\E\|X_{t}^{\d}\|_{m}^{m} & \lesssim\frac{1}{m}\E\|x_{0}\|_{m}^{m}+1.\label{eq:Xd_Lm_bound}
\end{align}
Taking $\d\to0$ finishes the proof.

\textit{Step 2: }We first note that it is enough to prove \eqref{eq:nonlocal_energy_bound}
for $x\in L^{\infty}(\O,\mcF_{0};L_{\operatorname{av}}^{\infty}(\mcO))$.
Due to \eqref{eq:l2-contraction-2} the case of $x\in L^{m}(\O,\mcF_{0};L_{\operatorname{av}}^{m}(\mcO))$
can then be concluded by approximation and Fatou's Lemma. Hence, assume
$x\in L^{\infty}(\O,\mcF_{0};L_{\operatorname{av}}^{\infty}(\mcO))$
from now on. By step one we have $\E\|X_{t}\|_{m}^{m}<\infty$ for
all $t\ge0$, $m\in\N$.

Letting $\a\to0$ in \eqref{eq:approx_2}, using dominated convergence
and \eqref{eq:A_growth}, we obtain
\begin{align*}
\frac{1}{m}\E\|X_{t}^{\d}\|_{m}^{m}\le\frac{1}{m} & \E\|x_{0}\|_{m}^{m}+\E\int_{0}^{t}\int_{\mcO}(X_{r}^{\d})^{[m-1]}A^{\d}(X_{r}^{\d})d\z dr\\
 & +(m-1)\sum_{k=1}^{\infty}\E\int_{0}^{t}\int_{\mcO}|X_{r}^{\d}|^{m-2}(Be_{k})^{2}d\z dr.
\end{align*}
Using \cite[Lemma 6.5]{AVMRT10} we observe that
\begin{align*}
 & \int_{\mcO}(X_{r}^{\d})^{[m-1]}A^{\d}(X_{r}^{\d})d\z\\
 & =\int_{\mcO}\int_{\mcO}J(\z-\xi)\phi^{\d}(X_{r}^{\d}(\xi)-X_{r}^{\d}(\z))(X_{r}^{\d})^{[m-1]}(\z)d\xi d\z\\
 & =-\frac{1}{2}\int_{\mcO}\int_{\mcO}J(\z-\xi)\phi^{\d}(X_{r}^{\d}(\xi)-X_{r}^{\d}(\z))((X_{r}^{\d})^{[m-1]}(\xi)-(X_{r}^{\d})^{[m-1]}(\z))d\xi d\z.
\end{align*}
From \cite{DPRRW06}, for every $m\in[2,\infty)$ there is a $c>0$
such that
\[
(a-b)(a^{[m-1]}-b^{[m-1]})\ge c|a-b|^{m}\quad\forall a,b\in\R.
\]
Since $\sgn(\phi^{\d}(a-b))=\sgn(a-b)$ and (cf.~\cite[Appendix A]{GT15})
\begin{align*}
\phi^{\d}(a)a & \ge\psi^{\d}(a)\\
 & \ge c\psi(a)-C\d
\end{align*}
for some $c>0$, $C>0$ and $\d>0$ small enough, this yields
\begin{align*}
\phi^{\d}(a-b)(a^{[m-1]}-b^{[m-1]}) & \ge c\phi^{\d}(a-b)(a-b)|a-b|^{m-2}\\
 & \ge c\psi^{\d}(a-b)|a-b|^{m-2}\\
 & \ge c|a-b|^{p+m-2}-C\d|a-b|^{m-2}\quad\forall a,b\in\R.
\end{align*}
Using this and the Poincar\'{e} type inequality \cite[Proposition 6.19]{AVMRT10}
in combination with \eqref{eq:av-zero} we get
\begin{align*}
 & \E\int_{\mcO}(X_{r}^{\d})^{[m-1]}A^{\d}(X_{r}^{\d})d\z\\
 & \le-\E\int_{\mcO}\int_{\mcO}J(\z-\xi)(c|X_{r}^{\d}(\xi)-X_{r}^{\d}(\z)|^{p+m-2}-C\d|X_{r}^{\d}(\xi)-X_{r}^{\d}(\z)|^{m-2})d\z d\xi\\
 & \le-c\E\|X_{r}^{\d}\|_{p+m-2}^{p+m-2}+C\d\E\int_{\mcO}\int_{\mcO}J(\z-\xi)|X_{r}^{\d}(\xi)-X_{r}^{\d}(\z)|^{m-2}d\z d\xi\\
 & \le-c\E\|X_{r}^{\d}\|_{p+m-2}^{p+m-2}+C\d(\E\|X_{r}^{\d}\|_{m}^{m}+1),
\end{align*}
for some $c>0.$ Now, using \eqref{eq:noise_bound} we obtain that
\begin{align*}
\sum_{k=1}^{\infty}\int_{\mcO}|X_{t}^{\d}|^{m-2}(Be_{k})^{2}d\z & \le\ve\|X_{t}^{\d}\|_{p+m-2}^{p+m-2}+C_{\ve},
\end{align*}
for all $\ve>0$ and some $C_{\ve}\ge0.$ Choosing $\ve,\d>0$ small
enough, we conclude that
\begin{align*}
\frac{1}{m}\E\|X_{t}^{\d}\|_{m}^{m}\le & \frac{1}{m}\E\|x_{0}\|_{m}^{m}-c\E\int_{0}^{t}\|X_{r}^{\d}\|_{p+m-2}^{p+m-2}dr+C\d\E\int_{0}^{t}\|X_{r}^{\d}\|_{m}^{m}dr+tC.
\end{align*}
Letting $\d\to0$ concludes the proof.
\end{proof}
We next analyze the deterministic situation, i.e.~$B=0$ in \eqref{eq:nonlocal_plp-1-1}.
Let $u$ be the unique SVI solution to 
\begin{align}
du_{t} & \in\left(\int_{\mcO}J(\cdot-\xi)|u_{t}(\xi)-u_{t}(\cdot)|^{p-2}(u_{t}(\xi)-u_{t}(\cdot))d\xi\right)dt\label{eq:nonlocal_plp-1-1-1}\\
u_{0} & =x_{0}\in H.\nonumber 
\end{align}

\begin{lem}
\label{lem:det_decay}Let $m_{0}=4-p\in(2,3]$, $x_{0}\in L_{av}^{m_{0}}(\mcO)\subseteq H$
and $u$ be the corresponding SVI solution to \eqref{eq:nonlocal_plp-1-1-1}.
Then there is a $C>0$ such that
\[
\|u_{t}\|_{2}^{2}\le\frac{C}{t}\|x_{0}\|_{m_{0}}^{m_{0}}.
\]
\end{lem}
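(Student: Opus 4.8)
\emph{The plan} is to exploit the specific value $m_0 = 4-p$, chosen precisely so that $p + m_0 - 2 = 2$: for this exponent the dissipation appearing in the energy estimate of Lemma~\ref{lem:general_lp_bound} is exactly $\|\cdot\|_2^2$, and the asserted decay then results from the standard argument of integrating the dissipation and dividing by $t$.

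First, I would invoke Lemma~\ref{lem:general_lp_bound} in the deterministic regime $B\equiv 0$, for which $C=0$, with $m=m_0=4-p\in(2,3]$. Since $p+m_0-2=2$, estimate \eqref{eq:nonlocal_energy_bound} becomes
\[
\frac{1}{m_0}\|u_t\|_{m_0}^{m_0}+c\int_0^t\|u_r\|_2^2\,dr\le\frac{1}{m_0}\|x_0\|_{m_0}^{m_0},\qquad t\ge 0,
\]
with $c=c(m_0)>0$. Discarding the nonnegative first term on the left yields the dissipation bound $c\int_0^t\|u_r\|_2^2\,dr\le \frac{1}{m_0}\|x_0\|_{m_0}^{m_0}$.

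Second, I would show that $t\mapsto\|u_t\|_2^2$ is non-increasing. This is immediate from the $m=2$ case of \eqref{eq:nonlocal_energy_bound}: since the dissipation integral there is nonnegative, restarting the (time-homogeneous) deterministic flow at time $s$ gives $\frac{1}{2}\|u_t\|_2^2\le\frac{1}{2}\|u_s\|_2^2$ for all $0\le s\le t$. (The same monotonicity also follows from the deterministic $L^2$-contraction \eqref{eq:l2-contraction-2} together with the fact that $0$ is the stationary SVI solution started at $0$, as $\vp\ge 0$ and $\vp(0)=0$.)

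Finally, combining the two steps, monotonicity gives $\|u_t\|_2^2\le\|u_r\|_2^2$ for every $r\in[0,t]$, whence
\[
t\,\|u_t\|_2^2=\int_0^t\|u_t\|_2^2\,dr\le\int_0^t\|u_r\|_2^2\,dr\le\frac{1}{c\,m_0}\|x_0\|_{m_0}^{m_0},
\]
and dividing by $t$ proves the claim with $C=\frac{1}{c\,m_0}$. I expect the only genuinely delicate point to be the rigorous passage, at the exact exponent $m_0$, of both the energy estimate and the $L^2$-monotonicity from the Moreau--Yosida approximations $u^\delta$ to the SVI solution $u$; this is handled exactly as in the proof of Lemma~\ref{lem:general_lp_bound}, by first establishing the bounds for $u^\delta$ and then letting $\delta\to 0$ with Fatou's lemma.
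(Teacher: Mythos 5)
Your proposal is correct and follows essentially the same route as the paper: apply Lemma \ref{lem:general_lp_bound} with $B\equiv0$ (so $C=0$) and $m=m_{0}=4-p$, so that the dissipation exponent $p+m_{0}-2$ equals $2$, then use the monotonicity of $t\mapsto\|u_{t}\|_{2}^{2}$ to bound $\int_{0}^{t}\|u_{r}\|_{2}^{2}\,dr\ge t\|u_{t}\|_{2}^{2}$ and divide by $t$. The only cosmetic difference is the order of operations (you drop the $L^{m_{0}}$ term before invoking monotonicity, the paper after), and your justification of the $L^{2}$-monotonicity via the $m=2$ case and the flow property is in fact more explicit than the paper's, which simply asserts it.
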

\begin{proof}
From Lemma \ref{lem:general_lp_bound} we know that
\begin{align*}
\frac{1}{m_{0}}\|u_{t}\|_{m_{0}}^{m_{0}} & \le\frac{1}{m_{0}}\|x_{0}\|_{m_{0}}^{m_{0}}-c\int_{0}^{t}\|u_{r}\|_{p+m_{0}-2}^{p+m_{0}-2}dr\\
 & =\frac{1}{m_{0}}\|x_{0}\|_{m_{0}}^{m_{0}}-c\int_{0}^{t}\|u_{r}\|_{2}^{2}dr
\end{align*}
In particular, $t\mapsto\|u_{t}\|_{m_{0}}^{m_{0}}$ is non-increasing.
Using that also $t\mapsto\|u_{t}\|_{2}^{2}$ is non-increasing yields
\begin{align*}
\frac{1}{m_{0}}\|u_{t}\|_{m_{0}}^{m_{0}} & \le\frac{1}{m_{0}}\|x_{0}\|_{m_{0}}^{m_{0}}-ct\|u_{t}\|_{2}^{2}.
\end{align*}
Hence,
\[
\|u_{t}\|_{2}^{2}\le\frac{C}{t}\|x_{0}\|_{m_{0}}^{m_{0}}.
\]
\end{proof}
Next we prove concentration on bounded $L^{m_{0}}$ sets for sufficiently
regular initial conditions. 
\begin{lem}
\label{lem:concentration}Let $\ve>0$ and $x\in L_{\operatorname{av}}^{m_{1}}(\mcO)\subseteq H$
with $m_{1}=m_{0}+2-p\in(2,4]$, $m_{0}=4-p\in(2,3]$. Then there
is an $R=R(\ve)>0$ such that
\[
Q_{T}(x,B_{R}^{m_{0}}(0))\ge1-\ve
\]
for all $T\ge1$.
\end{lem}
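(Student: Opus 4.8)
The plan is to extract the concentration estimate directly from the energy inequality of Lemma~\ref{lem:general_lp_bound}, the key point being the exact arithmetic identity $p+m_1-2=m_0$. Recall that $Q_T(x,\cdot)=\frac1T\int_0^T P_t(x,\cdot)\,dt$ denotes the time-averaged transition probability, so that for any nonnegative measurable functional $F$ on $H$ we have, by Tonelli,
\[
\int_H F(y)\,Q_T(x,dy)=\frac1T\int_0^T\E F(X_t^x)\,dt.
\]
Since $B_R^{m_0}(0)$ is the ball of radius $R$ in $L_{\operatorname{av}}^{m_0}(\mcO)$, controlling $Q_T(x,B_R^{m_0}(0))$ amounts to controlling the time-averaged $L^{m_0}$-moment of the solution.

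First I would apply Lemma~\ref{lem:general_lp_bound} with $m=m_1$, which is admissible since $x\in L_{\operatorname{av}}^{m_1}(\mcO)$. Because $p+m_1-2=p+(m_0+2-p)-2=m_0$, the dissipation term in \eqref{eq:nonlocal_energy_bound} is precisely the $L^{m_0}$-norm, giving
\[
\frac{1}{m_1}\E\|X_t\|_{m_1}^{m_1}+c\,\E\int_0^t\|X_r\|_{m_0}^{m_0}\,dr\le\frac{1}{m_1}\|x\|_{m_1}^{m_1}+tC.
\]
Discarding the nonnegative first term, dividing by $T$, and using $T\ge1$ to bound $\tfrac{1}{m_1T}\|x\|_{m_1}^{m_1}$ by $\tfrac{1}{m_1}\|x\|_{m_1}^{m_1}$, I obtain a bound uniform in $T\ge1$:
\[
\frac1T\int_0^T\E\|X_r^x\|_{m_0}^{m_0}\,dr\le\frac1c\Big(\frac{1}{m_1}\|x\|_{m_1}^{m_1}+C\Big)=:M<\infty.
\]

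It then remains to convert this moment bound into the claimed tail estimate. By Markov's inequality applied under the time average,
\[
Q_T\big(x,\{y:\|y\|_{m_0}\ge R\}\big)=\frac1T\int_0^T\P\big(\|X_r^x\|_{m_0}\ge R\big)\,dr\le\frac{1}{R^{m_0}}\cdot\frac1T\int_0^T\E\|X_r^x\|_{m_0}^{m_0}\,dr\le\frac{M}{R^{m_0}},
\]
for every $R>0$ and every $T\ge1$. Choosing $R=R(\ve):=(M/\ve)^{1/m_0}$ yields $Q_T(x,B_R^{m_0}(0))\ge1-\ve$ for all $T\ge1$, as desired.

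I do not anticipate a genuine obstacle: the entire content is the observation that testing the energy inequality at the level $m=m_1$ produces dissipation in exactly the $L^{m_0}$-norm defining the target ball, while the time-averaging kills the linearly growing right-hand side and leaves a $T$-independent constant. The only points needing a word of care are the (elementary) use of Tonelli to identify the $Q_T$-integral with the time average of nonnegative quantities, and the role of $T\ge1$ in absorbing the initial-condition contribution; note that the resulting $R$ depends on $\ve$ and on $\|x\|_{m_1}$, as permitted by the statement.
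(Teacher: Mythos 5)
Your proposal is correct and follows essentially the same route as the paper: apply Lemma~\ref{lem:general_lp_bound} with $m=m_{1}$ so that the dissipation term is exactly the $L^{m_{0}}$-norm, then combine the resulting uniform-in-$T\ge1$ time-averaged moment bound with Markov's inequality under the time average to choose $R=R(\ve)$. Your version with $R^{m_{0}}$ in the Chebyshev step is if anything slightly more careful about the ball's radius than the paper's display, but the argument is identical in substance.
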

\begin{proof}
By Lemma \ref{lem:general_lp_bound} we have
\begin{align*}
\frac{1}{t}\E\|X_{t}^{x}\|_{m_{1}}^{m_{1}}+c\E\frac{1}{t}\int_{0}^{t}\|X_{r}^{x}\|_{m_{0}}^{m_{0}}dr & \le\frac{1}{t}\E\|x\|_{m_{1}}^{m_{1}}+C.
\end{align*}
Thus, for $T\ge1$,
\begin{align*}
Q_{T}(x,B_{R}^{m_{0}}(0)) & =\frac{1}{T}\int_{0}^{T}P_{r}(x,B_{R}^{m_{0}}(0))dr\\
 & \ge\frac{1}{T}\int_{0}^{T}\left(1-\frac{\E\|X_{r}^{x}\|_{m_{0}}^{m_{0}}}{R}\right)dr\\
 & =1-\frac{1}{R}\frac{1}{T}\int_{0}^{T}\E\|X_{r}^{x}\|_{m_{0}}^{m_{0}}dr\\
 & \ge1-\frac{C}{RT}\E\|x\|_{m_{1}}^{m_{1}}-\frac{C}{R}.
\end{align*}
Choosing $R$ large enough yields the claim.
\end{proof}
\begin{lem}
\label{lem:compare_det}For each $T\ge0$, $\eta>0$ we have
\[
\inf_{x\in B}\P(\sup_{t\in[0,T]}\|X_{t}^{x}-u_{t}^{x}\|_{H}^{2}\le\eta)>0
\]
for all bounded sets $B\subseteq H$.
\end{lem}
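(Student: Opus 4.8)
The plan is to remove the noise by a change of variable and then compare pathwise with the deterministic flow, concluding via the support of the Gaussian process $W^B_t=BW_t$. I would run all estimates at the level of the strong approximants $X^{x,\d},u^{x,\d}$ from \cite[Section~4]{GT15}, where $A$ is replaced by $A^\d(w)=\int_{\mcO}J(\cdot-\xi)\phi^\d(w(\xi)-w(\cdot))\,d\xi$; this $A^\d$ is single-valued, Lipschitz, dissipative, satisfies $A^\d(0)=0$ (since $\phi^\d(0)=0$), and obeys \eqref{eq:A_growth} uniformly in $\d$ (from the Yosida bound $|\phi^\d(r)|\le|r|^{p-1}$). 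Setting $v_t:=X_t^{x,\d}-W_t^B$, the process $v$ solves the \emph{pathwise} $H$-valued ODE $\dot v_t=A^\d(v_t+W_t^B)=A^\d(X_t^{x,\d})$ with $v_0=x$, while $\dot u_t=A^\d(u_t)$ with $u_0=x$, so the chain rule applies with no Itô correction. Fix $\rho\in(0,1]$ and let $\Gamma_\rho:=\{\sup_{t\in[0,T]}\|W_t^B\|_H\le\rho\}$. Testing against $v_t$, using $\langle A^\d(w),w\rangle\le0$ and \eqref{eq:A_growth}, gives on $\Gamma_\rho$
\[
\tfrac{d}{dt}\|v_t\|_H^2 = 2\langle A^\d(X_t^{x,\d}),v_t\rangle \le 2\|A^\d(X_t^{x,\d})\|_H\,\rho \le C\rho\,(1+\|v_t\|_H^2),
\]
so Gronwall yields the uniform pathwise bound $\sup_{t\le T}\|v_t\|_H^2\le(1+\|x\|_H^2)e^{CT}$ on $\Gamma_\rho$. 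Together with the deterministic contraction $\|u_t\|_H\le\|x\|_H$ this bounds the velocities $\|\dot v_t\|_H+\|\dot u_t\|_H\le M$ with $M=M(T,\|x\|_H)$ independent of $\rho\in(0,1]$.

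Writing $v_t-u_t=(X_t^{x,\d}-u_t)-W_t^B$ and using dissipativity of $A^\d$,
\[
\tfrac{d}{dt}\|v_t-u_t\|_H^2 = 2\langle A^\d(X_t^{x,\d})-A^\d(u_t),\,X_t^{x,\d}-u_t\rangle - 2\langle \dot v_t-\dot u_t,W_t^B\rangle \le 2M\rho \quad\text{on }\Gamma_\rho,
\]
the first bracket being $\le0$. Since $v_0-u_0=0$ this integrates to $\sup_{t\le T}\|v_t-u_t\|_H^2\le2M\rho T$, whence, by $\|X_t^{x,\d}-u_t^{x,\d}\|_H\le\|v_t-u_t\|_H+\|W_t^B\|_H$,
\[
\sup_{t\le T}\|X_t^{x,\d}-u_t^{x,\d}\|_H^2 \le 4M\rho T+2\rho^2 \quad\text{on }\Gamma_\rho.
\]
As all constants are independent of $\d$, I would pass to the limit along the $L^2(\O;C([0,T];H))$-convergences $X^{x,\d}\to X^x$ and $u^{x,\d}\to u^x$ (hence almost surely uniform on $[0,T]$ along a subsequence), obtaining the same bound for $X^x,u^x$ on $\Gamma_\rho$. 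Because $M=M(T,\|x\|_H)$ stays bounded as $x$ ranges over a bounded $B$, given $\eta>0$ I can fix $\rho=\rho(\eta,T,B)>0$ with $4M\rho T+2\rho^2\le\eta$ for all $x\in B$, so $\Gamma_\rho\subseteq\{\sup_{t\le T}\|X_t^x-u_t^x\|_H^2\le\eta\}$ uniformly in $x\in B$.

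Finally, this inclusion gives
\[
\inf_{x\in B}\P\bigl(\sup_{t\le T}\|X_t^x-u_t^x\|_H^2\le\eta\bigr) \ge \P(\Gamma_\rho) = \P\bigl(\sup_{t\in[0,T]}\|W_t^B\|_H\le\rho\bigr) > 0,
\]
the last strict inequality holding since $W^B=BW$ is a trace-class Wiener process on $H$, so the zero path lies in the support of its law on $C([0,T];H)$ and every ball around it carries positive mass; crucially $\P(\Gamma_\rho)$ does not depend on $x$. The main obstacle is the rigour of the pathwise manipulations, as $X^x,u^x$ are only SVI solutions; this is exactly why I would argue on the approximants, where the transformation $v=X^{x,\d}-W^B$ turns the SPDE into a genuine pathwise ODE, and only pass to the limit in the \emph{final} inequality. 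The delicate point is the $\d$-uniformity of the growth and dissipativity bounds for $A^\d$ (and, in the limit $p=1$, their survival for the multivalued operator), which I would control through $|\phi^\d(r)|\le|r|^{p-1}$ and $A^\d(0)=0$.
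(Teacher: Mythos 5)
Your proof is correct and follows essentially the same route as the paper: both work with the Moreau--Yosida approximants, use the transformation $Y^{\d}=X^{\d}-W^{B}$ to obtain a pathwise ODE, combine monotonicity of $A^{\d}$ with the $\d$-uniform linear growth bound \eqref{eq:A_growth} to control $\|Y^{\d}_t-u^{\d}_t\|_H$ on the positive-probability event where $\sup_{t\le T}\|W^{B}_t\|_H$ is small, and then let $\d\to0$. Your version merely makes explicit two steps the paper leaves implicit (the triangle inequality $\|X^{\d}_t-u^{\d}_t\|_H\le\|Y^{\d}_t-u^{\d}_t\|_H+\|W^{B}_t\|_H$ and the uniformity of the constants over bounded sets $B$ and over $\d$), which is fine.
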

\begin{proof}
We consider $Y_{t}^{\d}:=X_{t}^{\d}-W_{t}^{B}$ which satisfies
\begin{align*}
\frac{d}{dt}Y_{t}^{\d} & =A^{\d}(Y_{t}^{\d}+W_{t}^{B})dt\\
Y_{0}^{\d} & =x_{0}.
\end{align*}
Accordingly, let $u^{\d}$ be the unique solution to \eqref{eq:nonlocal_plp-1-1-1}
with $\phi(z)=|z|^{p-2}z$ replaced by $\phi^{\d}$. Then,
\begin{align*}
\frac{1}{2}\frac{d}{dt}\|Y_{t}^{\d}\|_{H}^{2} & =(Y_{t}^{\d},A^{\d}(Y_{t}^{\d}+W_{t}^{B}))_{H}\\
 & \le\|Y_{t}^{\d}\|_{H}^{2}+C\|W_{t}^{B}\|_{H}^{2}.
\end{align*}
Thus,
\[
\sup_{t\in[0,T]}\|Y_{t}^{\d}\|_{H}^{2}\lesssim1+\|x_{0}\|_{H}^{2}.
\]
Similarly,
\[
\sup_{t\in[0,T]}\|u_{t}^{\d}\|_{H}^{2}\lesssim1+\|x_{0}\|_{H}^{2}.
\]
Moreover, 
\begin{align*}
\frac{1}{2}\frac{d}{dt}\|Y_{t}^{\d}-u_{t}^{\d}\|_{H}^{2} & =(Y_{t}^{\d}-u_{t}^{\d},A^{\d}(Y_{t}^{\d}+W_{t}^{B})-A^{\d}(u_{t}^{\d}))_{H}\\
 & \le-(W_{t}^{B},A^{\d}(Y_{t}^{\d}+W_{t}^{B})-A^{\d}(u_{t}^{\d}))_{H}\\
 & \le\|W_{t}^{B}\|_{H}\|A^{\d}(Y_{t}^{\d}+W_{t}^{B})-A^{\d}(u_{t}^{\d})\|_{H}\\
 & \le C\|W_{t}^{B}\|_{H}(\|Y_{t}^{\d}\|_{H}+\|W_{t}^{B}\|_{H}+\|u_{t}^{\d}\|_{H})\\
 & \le C\|W_{t}^{B}\|_{H}(\|x_{0}\|_{H}+\|W_{t}^{B}\|_{H}+1).
\end{align*}
Since $W^{B}$ is a trace class Wiener process in $H$, for each $\eta\in(0,1],T>0$
we can find a subset $\O_{\eta}\subseteq\Omega$ of positive mass
such that $\sup_{t\in[0,T]}{\|W_{t}^{B}(\o)\|_{H}}<\eta$ for all
$\o\in\O_{\eta}$. For $\o\in\O_{\eta}$ we obtain
\begin{align*}
\frac{1}{2}\frac{d}{dt}\|Y_{t}^{\d}-u_{t}^{\d}\|_{H}^{2} & \le C\eta(\|x_{0}\|_{H}+1).
\end{align*}
Choosing $\eta>0$ small enough and letting $\d\to0$ yields the claim.
\end{proof}
\begin{lem}
\label{lem:mass_at_0}Let $\ve>0$, $x\in L_{\operatorname{av}}^{m_{1}}(\mcO)$
with $m_{1}$ as before and $\d>0$. Then
\[
\liminf_{T\to\infty}Q_{T}(x,B_{\d}(0))>0.
\]
\end{lem}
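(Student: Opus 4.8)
The plan is to glue together three of the preceding lemmas via the Markov property. Heuristically: by Lemma~\ref{lem:concentration} the time average $Q_T(x,\cdot)$ keeps at least half of its mass on a fixed bounded $L^{m_0}$-ball; from any point of that ball the \emph{deterministic} flow decays into a small $H$-neighbourhood of $0$ within a fixed time (Lemma~\ref{lem:det_decay}); and with uniformly positive probability the stochastic trajectory stays close to the deterministic one (Lemma~\ref{lem:compare_det}). Thus from the ball one reaches $B_\d(0)$ in a fixed time with uniformly positive probability, and a Chapman--Kolmogorov estimate upgrades this to a lower bound on $Q_T(x,B_\d(0))$.

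More precisely, I would first apply Lemma~\ref{lem:concentration} with $\ve=\tfrac12$ to fix $R>0$ such that $Q_T(x,B_R^{m_0}(0))\ge\tfrac12$ for all $T\ge1$; this is exactly where the hypothesis $x\in L_{\operatorname{av}}^{m_1}(\mcO)$ enters. Since $m_0>2$ and $\mcO$ is bounded, $L^{m_0}\hookrightarrow L^2$, so $B_R^{m_0}(0)$ is a bounded subset of $H$. Next, for $x_0\in B_R^{m_0}(0)$, Lemma~\ref{lem:det_decay} gives $\|u_t^{x_0}\|_2^2\le\tfrac{C}{t}\|x_0\|_{m_0}^{m_0}\le\tfrac{CR^{m_0}}{t}$, so choosing $T_0=T_0(R,\d)$ large enough forces $\|u_{T_0}^{x_0}\|_H\le\d/4$ uniformly in $x_0\in B_R^{m_0}(0)$.

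I would then invoke Lemma~\ref{lem:compare_det} with $B=B_R^{m_0}(0)$, horizon $T_0$, and $\eta=\d^2/16$, to obtain $q:=\inf_{x_0\in B_R^{m_0}(0)}\P(\sup_{t\in[0,T_0]}\|X_t^{x_0}-u_t^{x_0}\|_H^2\le\d^2/16)>0$. On the corresponding event the triangle inequality yields $\|X_{T_0}^{x_0}\|_H\le\d/4+\d/4=\d/2<\d$, hence $P_{T_0}(x_0,B_\d(0))\ge q$ for every $x_0\in B_R^{m_0}(0)$. Finally, using that $P_t$ is a Feller semigroup together with Chapman--Kolmogorov,
\[
P_{r+T_0}(x,B_\d(0))=\int_H P_{T_0}(y,B_\d(0))\,P_r(x,dy)\ge q\,P_r(x,B_R^{m_0}(0)),
\]
and integrating over $r\in[0,T]$, combined with $Q_T(x,B_R^{m_0}(0))\ge\tfrac12$ for $T\ge1$, gives $(T+T_0)\,Q_{T+T_0}(x,B_\d(0))\ge\tfrac{q}{2}T$. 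Letting $T\to\infty$ yields $\liminf_{T\to\infty}Q_T(x,B_\d(0))\ge q/2>0$.

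The main obstacle is not any individual estimate but the matching of the three lemmas across two topologies: the concentration of Lemma~\ref{lem:concentration} lives in $L^{m_0}$, whereas the decay and the comparison must be read in $H=L^2$. This is precisely what the embedding $L^{m_0}\hookrightarrow L^2$ and the uniformity in $x_0$ over the ball (in Lemmas~\ref{lem:det_decay} and~\ref{lem:compare_det}) are there to resolve; once these are in place the Chapman--Kolmogorov bookkeeping is routine.
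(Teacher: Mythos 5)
Your proposal is correct and follows essentially the same route as the paper: fix $R$ via Lemma~\ref{lem:concentration}, use Lemma~\ref{lem:det_decay} to get uniform decay of the deterministic flow on $B_R^{m_0}(0)$ by a fixed time $T_0$, use Lemma~\ref{lem:compare_det} to get a uniform lower bound $P_{T_0}(\cdot,B_\d(0))\ge q$ on that ball, and conclude by Chapman--Kolmogorov and the time-average bound (the paper attributes this last gluing step to an idea from Es-Sarhir and von Renesse). Your constant bookkeeping ($\d/4$ plus $\d/4$) is in fact slightly cleaner than the paper's, and your final step of integrating and letting $T\to\infty$ is equivalent to the paper's shift-inside-the-liminf manipulation.
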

\begin{proof}
By Lemma \ref{lem:concentration} there is an $R>0$ such that
\[
Q_{T}(x,B_{R}^{m_{0}}(0))\ge\frac{1}{2}
\]
for all $T\ge1$. Moreover, by Lemma \ref{lem:det_decay} we have
\begin{align*}
\|u_{t}^{x}\|_{2}^{2} & \le\frac{C}{t}\|x\|_{m_{0}}^{m_{0}}\le\frac{C}{t}R
\end{align*}
for all $x\in B_{R}^{m_{0}}(0)$ and thus there is a $T_{0}=T_{0}(R,\d)$
such that
\begin{align*}
\|u_{t}^{x}\|_{2}^{2} & \le\frac{\d}{2},
\end{align*}
for all $t\ge T_{0}$. Using Lemma \ref{lem:compare_det} we observe
\[
P_{T_{0}}(x,B_{\d}(0))=P(\|X_{T_{0}}^{x}\|_{H}\le\d)\ge P(\|X_{T_{0}}^{x}-u_{T_{0}}^{x}\|_{H}\le\frac{\d}{2})\ge\g>0
\]
for some $\g=\g(\d,T_{0})>0$ and all $x\in B_{R}^{m_{0}}(0)$. Thus,
following an idea from \cite{ESR12}, we conclude that
\begin{align*}
\liminf_{T\to\infty}Q_{T}(x,B_{\d}(0)) & =\liminf_{T\to\infty}\frac{1}{T}\int_{0}^{T}P_{s}(x,B_{\d}(0))ds\\
 & =\liminf_{T\to\infty}\frac{1}{T}\int_{0}^{T}P_{s+T_{0}}(x,B_{\d}(0))ds\\
 & =\liminf_{T\to\infty}\frac{1}{T}\int_{0}^{T}\int_{H}P_{s}(x,dz)P_{T_{0}}(z,B_{\d}(0))ds\\
 & \ge\liminf_{T\to\infty}\frac{1}{T}\int_{0}^{T}\int_{B_{R}^{m_{0}}(0)}P_{s}(x,dz)P_{T_{0}}(z,B_{\d}(0))ds\\
 & \ge\g\liminf_{T\to\infty}Q_{T}(x,B_{R}^{m_{0}}(0))\\
 & \ge\frac{\g}{2}>0.
\end{align*}
\end{proof}
\begin{lem}
\label{lem:e-property}Let $X$, $Y$ be two SVI solutions to \eqref{eq:nonlocal_plp-1-1}
with initial conditions $x_{0},y_{0}\in L^{2}(\Omega,\mcF_{0};L_{\operatorname{av}}^{m}(\mcO))$
respectively. Then, for all $m\ge1$,
\[
\|X_{t}-Y_{t}\|_{L^{m}(\Ocal)}\le\|x_{0}-y_{0}\|_{L^{m}(\Ocal)}\quad\P\mathrm{{-a.s.}},\,\forall t\ge0.
\]
In particular, the semigroup $P_{t}$ satisfies the $e$-property
on $H$.
\end{lem}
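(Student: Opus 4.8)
The plan is to exploit that $X$ and $Y$ are driven by \emph{the same} noise $B\,dW_t$, so that upon subtraction the stochastic part cancels and the difference evolves \emph{pathwise} according to a deterministic equation, to which the ordinary (first order) chain rule applies. Concretely, I would argue on the level of the approximations $X^{\d},Y^{\d}$ (with $\phi^{\d}$ and the same $B\,dW_t$) from which $X,Y$ are constructed in \cite[Section 4]{GT15}. Setting $Z^{\d}:=X^{\d}-Y^{\d}$, subtraction kills the noise and yields, for $\P$-a.e.\ $\o$, the deterministic equation $\tfrac{d}{dt}Z^{\d}_t=A^{\d}(X^{\d}_t)-A^{\d}(Y^{\d}_t)$ in $H$. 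As in Step~2 of Lemma~\ref{lem:general_lp_bound}, it suffices to treat bounded initial data $x_{0},y_{0}\in L^{\infty}(\O,\mcF_{0};L^{\infty}_{\operatorname{av}}(\mcO))$; the general $L^{m}$ case then follows by approximation in $L^{m}$, using \eqref{eq:l2-contraction-2} to pass to the limit together with Fatou's lemma and lower semicontinuity of $\|\cdot\|_{L^{m}}$.

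Next I would apply the chain rule to $t\mapsto\tfrac{1}{m}\|Z^{\d}_t\|_m^m$ (writing $r^{[m-1]}:=|r|^{m-2}r$), using a family of smooth convex approximations generalizing the $\iota^{\a}$ of Lemma~\ref{lem:general_lp_bound} and then removing the regularization, to obtain
\[
\frac{1}{m}\frac{d}{dt}\|Z^{\d}_t\|_m^m=\int_{\mcO}(Z^{\d}_t)^{[m-1]}\big(A^{\d}(X^{\d}_t)-A^{\d}(Y^{\d}_t)\big)\,d\z.
\]
Symmetrizing by means of \cite[Lemma 6.5]{AVMRT10}, exactly as in the computation following \eqref{eq:approx_2}, the right-hand side equals
\[
-\tfrac{1}{2}\int_{\mcO}\int_{\mcO}J(\z-\xi)\big[\phi^{\d}(X^{\d}(\xi)-X^{\d}(\z))-\phi^{\d}(Y^{\d}(\xi)-Y^{\d}(\z))\big]\big[(Z^{\d}(\xi))^{[m-1]}-(Z^{\d}(\z))^{[m-1]}\big]\,d\xi\,d\z.
\]

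The heart of the matter is the sign of the integrand. Writing $s:=X^{\d}(\xi)-X^{\d}(\z)$ and $t:=Y^{\d}(\xi)-Y^{\d}(\z)$, one has the identity $s-t=Z^{\d}(\xi)-Z^{\d}(\z)$, so the two increments $s-t$ and $Z^{\d}(\xi)-Z^{\d}(\z)$ carry the same sign. Since both $r\mapsto\phi^{\d}(r)$ and $r\mapsto r^{[m-1]}$ are non-decreasing, the factors $\phi^{\d}(s)-\phi^{\d}(t)$ and $(Z^{\d}(\xi))^{[m-1]}-(Z^{\d}(\z))^{[m-1]}$ inherit this common sign, whence their product is $\ge0$; with $J\ge0$ the whole double integral is $\ge0$ and the displayed derivative is $\le0$. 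This gives $\|Z^{\d}_t\|_m\le\|Z^{\d}_0\|_m=\|x_{0}-y_{0}\|_m$ for all $t\ge0$. I emphasize that only monotonicity of the two maps enters here, so the bound holds for every $m\in[1,\infty)$. Letting $\d\to0$, using that $X^{\d}\to X$, $Y^{\d}\to Y$ in $L^{2}(\O;C([0,T];H))$ to extract a.s.\ and then a.e.\ convergence along a subsequence, and invoking Fatou's lemma yields the claimed contraction $\P$-a.s. The $e$-property is then immediate from the case $m=2$: for $F\in\Lip_b(H)$,
\[
|P_tF(x)-P_tF(y)|\le\|F\|_{\Lip}\,\E\|X^{x}_t-X^{y}_t\|_H\le\|F\|_{\Lip}\,\|x-y\|_H
\]
uniformly in $t\ge0$, which is precisely the required (uniform-in-$t$) equicontinuity of $\{P_tF\}_{t\ge0}$.

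The main obstacle I anticipate is not the monotonicity inequality, which is elementary, but the \emph{rigorous chain-rule and regularity bookkeeping} for $\tfrac{1}{m}\|\cdot\|_m^m$ when $m\in[1,2)$: there $r^{[m-1]}$ is non-Lipschitz at the origin (and merely $\sgn$ for $m=1$), so the $\iota^{\a}$ of Lemma~\ref{lem:general_lp_bound} does not smooth the corner and must be replaced by convex approximations whose derivatives are non-decreasing, bounded, and converge monotonically. Because the sign argument uses only monotonicity, each such approximation gives $\tfrac{d}{dt}\int_{\mcO}j_\eps(Z^{\d}_t)\,d\z\le0$, and a monotone/dominated convergence passage (combined with the reduction to bounded initial data, which guarantees finiteness of all quantities) recovers the $L^{m}$ contraction. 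Justifying the two limits—the removal of the regularization and $\d\to0$—is the only genuinely technical point.
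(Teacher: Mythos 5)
Your proposal is correct and follows essentially the same route as the paper's proof: the paper likewise works pathwise with the $\d$-approximants (noise cancelled), applies the chain rule to $\eta^{\a}(X_{t}^{\d}-Y_{t}^{\d})$ where $\iota^{\a}$ is exactly the Moreau--Yosida approximation of $\frac{1}{m}|\cdot|^{m}$ that you anticipate as the fix for $m\in[1,2)$ (citing \cite[Lemma IV.4.3]{S97} for the chain rule), and then obtains the sign $\frac{d}{dt}\eta^{\a}\le0$ from \cite[Lemma 6.6]{AVMRT10}, which packages precisely your symmetrization-plus-monotonicity computation, before letting $\a\to0$ and $\d\to0$. The only differences are cosmetic: you unpack the monotonicity lemma by hand and add a (harmless, unnecessary) reduction to bounded initial data.
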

\begin{proof}
We have that
\[
d(X_{t}^{\d}-Y_{t}^{\d})=(A^{\d}(X_{t}^{\d})-A^{\d}(Y_{t}^{\d}))dt
\]
and thus $t\mapsto(X_{t}^{\d}-Y_{t}^{\d})\in W^{1,2}([0,T];H)$. Let
$\iota^{\a}$ be the Moreau-Yosida approximation of $\frac{1}{m}|\cdot|^{m}$
and 
\[
\eta^{\a}(v):=\int_{\mcO}\iota^{\a}(v)d\z
\]
for $v\in H$. By \cite[Lemma IV.4.3]{S97} we obtain that
\begin{align*}
\frac{d}{dt}\eta^{\a}(X_{t}^{\d}-Y_{t}^{\d}) & =(\g_{t}^{\a},A^{\d}(X_{t}^{\d})-A^{\d}(Y_{t}^{\d}))_{H},\text{ for a.e. }t\in[0,T],
\end{align*}
where $\g_{t}^{\a}:=(\iota^{\a})'(X_{t}^{\d}-Y_{t}^{\d})\in L^{2}([0,T];H)$.
Using \cite[Lemma 6.6]{AVMRT10} we conclude that, $\P$-a.s.,
\begin{align*}
\frac{d}{dt}\eta^{\a}(X_{t}^{\d}-Y_{t}^{\d}) & \le0.
\end{align*}
Letting $\a\to0$, then $\d\to0$ concludes the proof.
\end{proof}

\begin{proof}
[Proof of Theorem \ref{thm:nonlocal_ergodicity}:] \textit{Step 1:}
Existence and uniqueness of invariant measures

The proof relies on an application of \cite[Theorem 1]{KPS10}. Let
$x\in H$ and $\d>0$. Then we may choose $y\in L_{\operatorname{av}}^{m_{1}}(\mcO)$,
with $m_{1}$ as in Lemma \ref{lem:concentration}, such that 
\[
\|x-y\|_{H}^{2}\le\frac{\d}{2}.
\]
By Lemma \ref{lem:e-property} we then have
\begin{equation}
\|X_{t}^{x}-X_{t}^{y}\|_{H}^{2}\le\frac{\d}{2}\quad\text{for all }t\ge0.\label{eq:diff}
\end{equation}
Lemma \ref{lem:mass_at_0} yields
\[
\liminf_{T\to\infty}Q_{T}(y,B_{\frac{\d}{2}}(0))>0.
\]
Due to \eqref{eq:diff} we conclude
\begin{equation}
\liminf_{T\to\infty}Q_{T}(x,B_{\d}(0))\ge\liminf_{T\to\infty}Q_{T}(y,B_{\frac{\d}{2}}(0))>0.\label{eq:concentration_at_0}
\end{equation}
An application of \cite[Theorem 1]{KPS10} implies that $P_{t}$ has
a unique invariant probability measure $\mu$.

\textit{Step 2: }We first note that for all $x\in H$ such that $\left\{ Q_{T}(x,\text{\textperiodcentered})\right\} _{T\ge T_{0}}$
is tight for some $T_{0}\ge0$, we have that
\[
Q_{T}(x,\text{\textperiodcentered})\rightharpoonup^{*}\mu\quad\text{for }T\to\infty
\]
by uniqueness of the invariant measure $\mu$. Hence,
\[
\mcT(\mu)=\left\{ x\in H:\,\left\{ Q_{T}(x,\text{\textperiodcentered})\right\} _{T\ge T_{0}}\,\text{is tight for some }T_{0}\ge0\right\} .
\]
By \cite[Proposition 1]{KPS10} we have that 
\[
\supp\mu\subseteq\mcT(\mu).
\]
Moreover, using invariance of $\mu$, Fatou's Lemma and \eqref{eq:concentration_at_0}
we note that
\begin{align*}
\mu(B_{\d}(0)) & =\liminf_{T\to\infty}Q_{T}\mu(B_{\d}(0))\\
 & =\liminf_{T\to\infty}\int_{H}Q_{T}(x,B_{\d}(0))d\mu(x)\\
 & \ge\int_{H}\liminf_{T\to\infty}Q_{T}(x,B_{\d}(0))d\mu(x)\\
 & >0
\end{align*}
for all $\d>0$. Hence, 
\[
0\in\operatorname{supp}\mu.
\]

\textit{Step 3:} An application of It\^{o}'s formula yields
\[
\E\|X_{t}^{\d}\|_{H}^{2}\le2\E\int_{0}^{t}(A^{\d}(X_{r}^{\d}),X_{r}^{\d})_{H}dr+t\|B\|_{L_{2}(H)}^{2}.
\]
By \cite[Lemma 6.5]{AVMRT10} we have
\[
2(A^{\d}(v),v)_{H}=-p\vp^{\d}(v)
\]
and thus
\[
\frac{c}{t}\E\int_{0}^{t}\vp^{\d}(X_{r}^{\d})dr\le\|B\|_{L_{2}(H)}^{2},
\]
for some $c>0$. Since, by \cite[Appendix A]{GT15}, 
\[
|\vp^{\d}(v)-\vp(v)|\le C\d(1+\|v\|_{H}^{2})\quad\forall v\in H
\]
we obtain that
\[
\frac{c}{t}\E\int_{0}^{t}\vp(X_{r}^{\d})dr\le\|B\|_{L_{2}(H)}^{2}+\frac{C\d}{t}\E\int_{0}^{t}(\|X_{r}^{\d}\|_{H}^{2}+1)dr.
\]
Letting $\d\to0$ yields
\[
\frac{c}{t}\E\int_{0}^{t}\vp(X_{r})dr\le\|B\|_{L_{2}(H)}^{2}.
\]
Since $0\in\mcT(\mu)$ this is easily seen to imply \eqref{eq:ipm_mass_bound}.
\end{proof}

\section{Ergodicity for stochastic local \texorpdfstring{$p$}{p}-Laplace
equations\label{sec:ergodicity_local}}

In this section we consider stochastic singular $p$-Laplace equations
with additive noise, that is,
\begin{align}
dX_{t} & \in\div\left(|\nabla X_{t}|^{p-2}\nabla X_{t}\right)\,dt+BdW_{t},\nonumber \\
|\nabla X_{t}|^{p-2}\nabla X_{t}\cdot\nu & \ni0\quad\mathrm{\text{on }}\partial\Ocal,\;t>0,\label{eq:singular_p_laplace-1}\\
X_{0} & =x_{0}\in L^{2}(\O,\mcF_{0};L_{\operatorname{av}}^{2}(\Ocal)),\nonumber 
\end{align}
with $p\in[1,2)$ on a bounded, smooth domain $\mcO\subseteq\R^{d}$
with convex boundary $\partial\mcO$. In the following we set $H:=L_{\operatorname{av}}^{2}(\Ocal)$
and $S:=H_{\operatorname{av}}^{1}(\mcO)$. Here, $W$ is a cylindrical
Wiener process on $H$ and $B\in L_{2}(H)$ symmetric with $B\in L_{2}(H,H_{\operatorname{av}}^{3})$.
Hence, 
\[
W_{t}^{B}:=BW_{t}
\]
is a trace-class Wiener process in $H_{\operatorname{av}}^{3}\subseteq H$.
As in Section \ref{sec:ergodicity_nonlocal}, we further assume that
there is an orthonormal basis $e_{k}$ of $H$ such that
\begin{equation}
\sum_{k=1}^{\infty}\|Be_{k}\|_{\infty}^{2}<\infty,\label{eq:noise_bound-1}
\end{equation}
cf.~\cite{BDPR09-4} where similar conditions on $B$ have been used
in the case $p=1$. We define, for $p\in(1,2)$,
\[
\vp(v):=\begin{cases}
\frac{1}{p}\int_{\Ocal}\abs{\nabla u}^{p}\,d\xi & \text{if }v\in W^{1,p}(\mcO)\\
+\infty & \text{if }v\in L^{p}(\mcO)\setminus W^{1,p}(\mcO)
\end{cases}
\]
and for $p=1$,
\[
\vp(v):=\begin{cases}
\|v\|_{TV} & \text{if }v\in BV(\mcO)\\
+\infty & \text{if }v\in L^{1}(\mcO)\setminus BV(\mcO).
\end{cases}
\]

Then \eqref{eq:singular_p_laplace-1} can be recast in its relaxed
form
\[
dX_{t}\in-\partial_{L^{2}}\vp(X_{t})dt+BdW_{t},
\]
where $\partial_{L^{2}}\vp$ denotes the $L^{2}$ subgradient of $\vp$
restricted to $L^{2}$. In \cite[Section 7.2.2]{GT11} the existence
and uniqueness of a (limit) solution $X=X^{x_{0}}$ to \eqref{eq:singular_p_laplace-1}
has been proven and 
\begin{equation}
\|X_{t}^{x}-X_{t}^{y}\|_{H}^{2}\le\|x-y\|_{H}^{2}\quad\forall t\ge0,\,\P\text{-a.s..}\label{eq:local_contraction}
\end{equation}
Following \cite[Appendix C]{GT11} it is easy to see that $X$ also
is an SVI solution to \eqref{eq:singular_p_laplace-1}, which by \cite[Section 3]{GT15}
is unique. From the construction of $X$ it is easy to see that the
average value is preserved, that is,
\begin{equation}
X_{t}\in L_{\operatorname{av}}^{2}(\Ocal)\quad\forall t\ge0,\,\P\text{-a.s.}\label{eq:X_av}
\end{equation}
if $x_{0}\in L^{2}(\O,\mcF_{0};L_{\operatorname{av}}^{2}(\Ocal))$.
Moreover, by \cite[Proposition 5.2]{GT11},
\[
P_{t}F(x):=\E F(X_{t}^{x})\quad\text{for }F\in\mcB_{b}(H)
\]
defines a Feller semigroup on $\mcB_{b}(H)$. By \eqref{eq:local_contraction},
$P_{t}$ satisfies the $e$-property on $H$. As a main result in
this section we obtain
\begin{thm}
\label{thm:local_ergodicity}There is a unique invariant measure $\mu$
for $P_{t}$, which satisfies 
\[
0\in\operatorname{supp}(\mu)\subseteq\mcT(\mu)
\]
and 
\[
\int_{H}\vp(x)d\mu(x)\lesssim\|B\|_{L_{^{2}}(H)}^{2}.
\]
\end{thm}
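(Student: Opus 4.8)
The plan is to transcribe, almost verbatim, the architecture used for Theorem~\ref{thm:nonlocal_ergodicity}, since that argument only drew on five inputs: the $e$-property, a cascade of $L^{m}$ energy bounds, the deterministic decay estimate, the comparison of the stochastic flow with the deterministic flow on a small-noise event, and the resulting concentration of $Q_{T}$ near the origin, after which \cite[Theorem~1]{KPS10} is invoked. For the local equation \eqref{eq:singular_p_laplace-1} the $e$-property is already recorded from \eqref{eq:local_contraction}, and average preservation from \eqref{eq:X_av}. So the task reduces to re-establishing the four remaining ingredients for the operator $A=\div(|\nabla\cdot|^{p-2}\nabla\cdot)$ and then copying Steps~1--3 of the proof of Theorem~\ref{thm:nonlocal_ergodicity}.

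The one place where a genuinely new computation enters is the energy cascade, i.e.\ the local analogue of Lemma~\ref{lem:general_lp_bound}. First I would apply It\^o's formula for $\frac{1}{m}\|\cdot\|_{m}^{m}$ along the regularised solutions $X^{\d}$ constructed in \cite[Section~7.2.2]{GT11}, mimicking the approximation via $\iota^{\a}$ and $\eta^{\a,\b}$. The dissipative term is then handled by integration by parts: the Neumann boundary condition annihilates the boundary integral, leaving
\[
\int_{\mcO}(X^{\d})^{[m-1]}\,\div\bigl(|\nabla X^{\d}|^{p-2}\nabla X^{\d}\bigr)\,d\xi=-(m-1)\int_{\mcO}|X^{\d}|^{m-2}|\nabla X^{\d}|^{p}\,d\xi .
\]
Setting $w^{\d}:=(X^{\d})^{[\frac{p+m-2}{p}]}$, the chain rule gives $\nabla w^{\d}=\frac{p+m-2}{p}|X^{\d}|^{\frac{m-2}{p}}\nabla X^{\d}$ and $\|w^{\d}\|_{p}^{p}=\|X^{\d}\|_{p+m-2}^{p+m-2}$, so the right-hand side equals $-c\,\|\nabla w^{\d}\|_{p}^{p}$. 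The Poincar\'e--Wirtinger inequality then converts $\|\nabla w^{\d}\|_{p}^{p}$ into a lower bound in $\|X^{\d}\|_{p+m-2}^{p+m-2}$. Controlling the noise term by \eqref{eq:noise_bound-1} and Young's inequality, absorbing a small part into the dissipative term, yields
\[
\frac{1}{m}\E\|X_{t}\|_{m}^{m}+c\,\E\int_{0}^{t}\|X_{r}\|_{p+m-2}^{p+m-2}\,dr\le\frac{1}{m}\E\|x_{0}\|_{m}^{m}+tC ,
\]
with $C=0$ when $B\equiv0$.

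Once this cascade is in hand, the remaining three ingredients transfer without change. Choosing $m_{0}=4-p$ gives $p+m_{0}-2=2$, so monotonicity of $t\mapsto\|u_{t}\|_{2}^{2}$ and of $t\mapsto\|u_{t}\|_{m_{0}}^{m_{0}}$ produces the decay $\|u_{t}\|_{2}^{2}\le\frac{C}{t}\|x_{0}\|_{m_{0}}^{m_{0}}$ as in Lemma~\ref{lem:det_decay}; averaging the bound with $m_{1}=m_{0}+2-p$ and Markov's inequality gives $Q_{T}(x,B_{R}^{m_{0}}(0))\ge1-\ve$ as in Lemma~\ref{lem:concentration}; and the small-noise comparison $\inf_{x\in B}\P(\sup_{[0,T]}\|X_{t}^{x}-u_{t}^{x}\|_{H}^{2}\le\eta)>0$ is obtained as in Lemma~\ref{lem:compare_det}, here using the trace-class regularity $W^{B}\in H_{\operatorname{av}}^{3}$ to make sense of $\nabla W^{B}$ in the dissipative pairing. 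Combining these exactly as in Lemma~\ref{lem:mass_at_0} yields $\liminf_{T\to\infty}Q_{T}(x,B_{\d}(0))>0$ for $x\in L_{\operatorname{av}}^{m_{1}}(\mcO)$; a density argument based directly on \eqref{eq:local_contraction} extends it to all $x\in H$, and \cite[Theorem~1]{KPS10} then delivers the unique invariant measure $\mu$. The inclusions $\supp\mu\subseteq\mcT(\mu)$ and $0\in\supp\mu$ follow as in Step~2, and the bound $\int_{H}\vp\,d\mu\lesssim\|B\|_{L_{2}(H)}^{2}$ as in Step~3, using the identity $(A^{\d}(v),v)_{H}=-p\,\vp^{\d}(v)$ together with the comparison of $\vp^{\d}$ and $\vp$ from \cite[Appendix~A]{GT15}.

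The main obstacle is exactly the coercivity step in the cascade. In the nonlocal setting the pointwise inequality $(a-b)(a^{[m-1]}-b^{[m-1]})\ge c|a-b|^{m}$ from \cite{DPRRW06} combined with the nonlocal Poincar\'e inequality \cite[Proposition~6.19]{AVMRT10} delivered lower coercivity directly, because the relevant quantities depend only on the differences $u(\xi)-u(\z)$ and are thus insensitive to the mean. In the local case the composed function $w^{\d}$ is \emph{not} mean-zero even though $X^{\d}$ is, so one must insert the Poincar\'e--Wirtinger inequality and then reabsorb the contribution of $\overline{w^{\d}}$ by interpolating the lower-order norm against the higher $L^{m}$ bound already established. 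Making the chain rule and the boundary-term cancellation rigorous at the level of the merely $W^{1,p}$-regular approximations $X^{\d}$ is the part demanding the most care; everything downstream is a faithful copy of Section~\ref{sec:ergodicity_nonlocal}.
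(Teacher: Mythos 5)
Your architecture is exactly the paper's: establish the local cascade of $L^{m}$ bounds, then rerun the deterministic-decay, concentration, small-noise-comparison and mass-at-zero lemmas and invoke \cite[Theorem 1]{KPS10}. The gap is precisely in the step you yourself identify as the only genuinely new computation. After the integration by parts and the substitution $w^{\d}=(X^{\d})^{[\sigma]}$, $\sigma=\frac{p+m-2}{p}$, you propose to recover coercivity via Poincar\'e--Wirtinger plus ``reabsorbing'' the mean $\overline{w^{\d}}$ by interpolating a lower-order norm against the Step-1 $L^{m}$ bound. Quantify that: by H\"older on the bounded domain,
\[
\lrabs{\overline{w^{\d}}}^{p}\,|\mcO|\le\Big(\tfrac{1}{|\mcO|}\int_{\mcO}|X^{\d}|^{\sigma}d\z\Big)^{p}|\mcO|\le\|X^{\d}\|_{p+m-2}^{p+m-2},
\]
so the ``error'' has exactly the same order as the coercive term $c\,\|X^{\d}\|_{p+m-2}^{p+m-2}$, with no small factor; it cannot be absorbed (the resulting inequality is vacuous). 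If instead you bound it through the Step-1 estimate $\E\|X_{t}\|_{m}^{m}\lesssim\E\|x_{0}\|_{m}^{m}+1$, you obtain \eqref{eq:local_lp_bound} only with an additional $t\,C\,(1+\E\|x_{0}\|_{m}^{m})^{\theta}$ term. This contradicts your own claim that $C=0$ when $B\equiv0$, and it is fatal downstream: the deterministic decay lemma (the local analogue of Lemma \ref{lem:det_decay}) needs the cascade with vanishing constant for $B\equiv0$; with your version one only gets $\|u_{t}\|_{2}^{2}\le\frac{C}{t}\|x_{0}\|_{m_{0}}^{m_{0}}+C'$, the deterministic flow no longer provably decays to zero, the argument of Lemma \ref{lem:mass_at_0} collapses, and \cite[Theorem 1]{KPS10} cannot be applied.

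The repair --- and what the paper actually does --- is to prove the coercivity with \emph{no} error term: for zero-mean $u$ one has the Poincar\'e-type inequality $\int_{\mcO}|u|^{m-2}|\nabla u|^{p}d\z\ge c\int_{\mcO}|u|^{p+m-2}d\z$ with $c=c(\mcO,p,m)$. The mean of $w^{\d}$ is not an obstruction because it is controlled by the \emph{gradient}, not by lower-order norms: since $\int_{\mcO}X^{\d}d\z=0$ and $r\mapsto r^{[1/\sigma]}$ is $\tfrac{1}{\sigma}$-H\"older,
\[
|\mcO|\,\lrabs{\overline{w^{\d}}}^{1/\sigma}=\lrabs{\int_{\mcO}\bigl((w^{\d})^{[1/\sigma]}-(\overline{w^{\d}})^{[1/\sigma]}\bigr)d\z}\lesssim\int_{\mcO}|w^{\d}-\overline{w^{\d}}|^{1/\sigma}d\z\lesssim\|w^{\d}-\overline{w^{\d}}\|_{p}^{1/\sigma},
\]
whence $\lrabs{\overline{w^{\d}}}\lesssim\|w^{\d}-\overline{w^{\d}}\|_{p}\lesssim\|\nabla w^{\d}\|_{p}$ and therefore $\|w^{\d}\|_{p}\lesssim\|\nabla w^{\d}\|_{p}$ outright (alternatively, a compactness--contradiction argument using $W^{1,p}\hookrightarrow\hookrightarrow L^{p}$ gives the same). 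With this, the cascade holds exactly as stated, including $C=0$ for $B\equiv0$, and the rest of your outline goes through. One secondary caveat: your small-noise comparison ``as in Lemma \ref{lem:compare_det}'' does not transfer verbatim, because that proof uses the linear growth $\|A^{\d}(u)\|_{H}\lesssim1+\|u\|_{H}$ of \eqref{eq:A_growth}, which is specific to the nonlocal operator and false for $\div(|\nabla\cdot|^{p-2}\nabla\cdot)$ on $H$; the paper instead quotes \cite[Lemma 6.6]{GT15} here, and your appeal to $W^{B}\in H_{\operatorname{av}}^{3}$ would still have to be turned into an actual estimate (pairing against $\nabla W^{B}$ and using $\|\phi^{\d}(\nabla v)\|_{p'}\lesssim\|\nabla v\|_{p}^{p-1}$ together with the energy bounds).
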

The proof of Theorem \ref{thm:local_ergodicity} proceeds along the
same principal ideas as Theorem \ref{thm:nonlocal_ergodicity}. However,
due to the local nature of \eqref{eq:singular_p_laplace-1} different
arguments have to be used in order to deduce the cascade of $L^{m}$
inequalities (cf.~Lemma \ref{lem:general_lp_bound-1} below). Once,
these inequalities have been shown for \eqref{eq:singular_p_laplace-1},
the proof can be concluded essentially as in Section \ref{sec:ergodicity_nonlocal}. 
\begin{lem}
\label{lem:general_lp_bound-1}Let $x_{0}\in L^{m}(\O,\mcF_{0};L_{\operatorname{av}}^{m}(\mcO))$,
$m\in[2,\infty)$ and let $X$ be the corresponding SVI solution to
\eqref{eq:nonlocal_plp-1-1}. Then there is a constant $c=c(p,m)>0$
such that
\begin{align}
\frac{1}{m}\E\|X_{t}\|_{m}^{m}+c\E\int_{0}^{t}\|X_{r}\|_{p+m-2}^{p+m-2}dr & \le\frac{1}{m}\E\|x_{0}\|_{m}^{m}+tC\quad\forall t\ge0.\label{eq:local_lp_bound}
\end{align}
\end{lem}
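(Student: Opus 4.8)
The plan is to follow the architecture of the proof of Lemma~\ref{lem:general_lp_bound}, replacing only the operator-specific computation. As there, I would work with the strong approximations $X^\d$ used in the construction of $X$ in \cite[Section 7.2.2]{GT11}, for which the singular subgradient is replaced by a nondegenerate regularization $A^\d$ of $\div(|\nabla\cdot|^{p-2}\nabla\cdot)$ (so that the chain rule and integration by parts below are classical), and apply It\^o's formula to the smooth, truncated approximation $\iota^\a$ of $\frac1m|\cdot|^m$ (mollifying in space where needed), exactly as in \eqref{eq:approx}--\eqref{eq:approx_2}. By \eqref{eq:local_contraction} and Fatou's Lemma it suffices to treat $x_0\in L^{\infty}(\O,\mcF_0;L^{\infty}_{\operatorname{av}}(\mcO))$ and then pass to general $L^m$ data by approximation; this reproduces Steps~1--2 of Lemma~\ref{lem:general_lp_bound}, and in particular gives $\E\|X_t^\d\|_m^m<\infty$ uniformly in $\d$.

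The genuinely new ingredient is the drift term. After letting $\a\to0$, the drift contributes $\E\int_0^t\int_\mcO (X_r^\d)^{[m-1]}A^\d(X_r^\d)\,d\xi\,dr$. In place of the antisymmetrization used in the nonlocal case, I would integrate by parts: the Neumann boundary condition annihilates the boundary integral and $\nabla\big((X_r^\d)^{[m-1]}\big)=(m-1)|X_r^\d|^{m-2}\nabla X_r^\d$, so that (for the true operator)
\[
\int_\mcO (X_r^\d)^{[m-1]}\,\div\!\big(|\nabla X_r^\d|^{p-2}\nabla X_r^\d\big)\,d\xi = -(m-1)\int_\mcO |X_r^\d|^{m-2}|\nabla X_r^\d|^{p}\,d\xi\le 0.
\]
At the regularized level this holds up to terms that either have the favorable sign (e.g.\ an added viscosity) or vanish as $\d\to0$, playing the role of the $C\d$-terms in Lemma~\ref{lem:general_lp_bound}.

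It then remains to bound the dissipation $\int_\mcO|u|^{m-2}|\nabla u|^p\,d\xi$ from below by $\|u\|_{p+m-2}^{p+m-2}$ for zero-mean $u$, recalling \eqref{eq:X_av}. The decisive observation is the substitution $w:=|u|^{\gamma-1}u$ with $\gamma:=\frac{p+m-2}{p}\ge1$, under which $\int_\mcO|u|^{m-2}|\nabla u|^p\,d\xi=\gamma^{-p}\|\nabla w\|_p^p$ while $\|w\|_p^p=\|u\|_{p+m-2}^{p+m-2}$. The required inequality thus reduces to a \emph{Poincar\'e} estimate $\|w\|_p\le C\|\nabla w\|_p$, with the \emph{same} exponent $p$ on both sides and therefore \emph{no restriction on the dimension $d$} --- exactly the feature needed to go beyond \eqref{eq:intro_dim_restr}. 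This Poincar\'e inequality holds because the constraint $\int_\mcO u=0$ rewrites as the nonlinear mean condition $\int_\mcO|w|^{1/\gamma-1}w\,d\xi=0$, which excludes nonzero constants; a standard Rellich compactness/contradiction argument then produces $C=C(p,m,\mcO)$. Inserting this yields the dissipative term $-c\,\E\int_0^t\|X_r^\d\|_{p+m-2}^{p+m-2}\,dr$.

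Finally, the It\^o correction is handled precisely as in Lemma~\ref{lem:general_lp_bound}: by \eqref{eq:noise_bound-1} one has $\sum_k\int_\mcO|X_r^\d|^{m-2}(Be_k)^2\,d\xi\le\ve\|X_r^\d\|_{p+m-2}^{p+m-2}+C_\ve$, so for $\ve$ small it is absorbed into the dissipation at the cost of the additive term $tC$. Collecting everything, absorbing the $\d$-errors using the uniform $L^m$-bound, and letting $\a\to0$ and then $\d\to0$ (Fatou) gives \eqref{eq:local_lp_bound}. I expect the main obstacles to be (i) making the integration-by-parts/chain-rule identity rigorous at the level of the specific regularization of \cite{GT11}, with uniform control of the $\d$-errors, and (ii) the nonlinear Poincar\'e inequality in the borderline case $p=1$, where $w=|u|^{m-2}u$, the gradient is a measure, and both the chain rule and the Poincar\'e estimate must be read in $BV$ and recovered by passing to the limit from the regularized solutions.
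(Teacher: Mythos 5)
Your proposal is correct and follows essentially the same route as the paper: the paper likewise reduces to bounded initial data via \eqref{eq:local_contraction} and Fatou, applies It\^o's formula with the truncated/mollified $m$-th power to the regularized approximations (there the viscous Moreau--Yosida approximations $X^{\ve,\d,n}$ of \cite{GT15} rather than the \cite{GT11} scheme, with the $\d$-errors controlled exactly by $a\cdot\phi^\d(a)\ge c\psi(a)-C\d$ as you anticipate), integrates the drift by parts using the Neumann condition, rewrites the dissipation as $c\int_{\mcO}|\nabla u^{[(p+m-2)/p]}|^{p}\,d\z$ and invokes a Poincar\'e inequality for zero-average $u$ (via \eqref{eq:X_av}), and then passes to the limits. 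If anything, your version is slightly sharper on two points: your explicit compactness/contradiction argument supplies the nonlinear mean-value Poincar\'e inequality that the paper invokes tersely, and your absorption of the It\^o correction into the dissipation via \eqref{eq:noise_bound-1} and Young's inequality (as in the nonlocal Lemma \ref{lem:general_lp_bound}) yields the stated linear-in-$t$ bound directly, whereas the paper's written conclusion via Gronwall's Lemma would, read literally, only give an exponential-in-$t$ bound.
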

\begin{proof}
\textit{Step 1:} We start by proving that for $x_{0}\in L^{m}(\O,\mcF_{0};L_{\operatorname{av}}^{m}(\mcO))$
we have $\E\|X_{t}\|_{L^{m}}^{m}<\infty$ for all $t\ge0$. 

In the following let $\psi(\cdot)=\frac{1}{p}|\cdot|^{p}$, $\phi:=\partial\psi$,
$\psi^{\d}$ be the Moreau-Yosida approximation of $\psi$ and $\phi^{\d}:=\partial\psi^{\d}$.
Recall that the unique SVI solution $X$ to \eqref{eq:nonlocal_plp-1-1}
has been constructed in \cite[Theorem 4.1]{GT15} as a limit of approximating
solutions $X^{\ve,\d,n}$ to 
\begin{align}
dX_{t}^{\ve,\d,n} & =\ve\D X_{t}^{\ve,\d,n}\,dt+\div\phi^{\d}\left(\nabla X_{t}^{\ve,\d,n}\right)\,dt+BdW_{t},\label{eq:full_approx_SVI_constr-1}\\
X_{0}^{\ve,\d,n} & =x_{0}^{n},\nonumber 
\end{align}
with zero Neumann boundary conditions, where $x_{0}^{n}\to x_{0}$
in $L^{2}(\O;H)$ with $x_{0}^{n}\in L^{2}(\O,\mcF_{0};H_{\operatorname{av}}^{1})$
and $\ve>0$. From \cite[Equation (3.6)]{GT15} we recall the bound
\begin{equation}
\E\sup_{t\in[0,T]}\|X_{t}^{\ve,\d,n}\|_{H^{1}}^{2}+2\ve\E\int_{0}^{T}\|\D X_{r}^{\ve,\d,n}\|_{2}^{2}dr\le C(\E\|x_{0}^{n}\|_{H^{1}}^{2}+1),\label{eq:strong_soln_visc-1}
\end{equation}
with a constant $C>0$ independent of $\ve$, $\d$ and $n$. In \cite[proof of Theorem 3.1]{GT15}
the following subsequent convergence has been shown in $L^{2}(\O;C([0,T];H))$
\begin{align}
X^{\ve,\d,n} & \to X^{\ve,n}\quad\text{for }\d\to0\nonumber \\
X^{\ve,n} & \to X^{n}\quad\text{for }\ve\to0\label{eq:convergence}\\
X^{n} & \to X\quad\text{for }n\to\infty.\nonumber 
\end{align}
Let $\iota^{\a},\t^{\b}$ and $\eta^{\a,\b}$ be as in the proof of
Lemma \ref{lem:general_lp_bound}. Then, since $X^{\ve,\d,n}$ is
a strong solution to \eqref{eq:full_approx_SVI_constr-1} and by It\^{o}'s
formula
\begin{align}
\E\eta^{\a,\b}( & X_{t}^{\ve,\d,n})=\E\eta^{\a,\b}(x_{0})\nonumber \\
 & +\E\int_{0}^{t}\int_{\mcO}(\iota^{\a})'(\t^{\b}\ast X_{r}^{\ve,\d,n})\left(\t^{\b}\ast(\ve\D X_{r}^{\ve,\d,n}+\div\phi^{\d}\left(\nabla X_{r}^{\ve,\d,n}\right))\right)\,d\xi dr\label{eq:approx_ito-1}\\
 & +\sum_{k=1}^{\infty}\E\int_{0}^{t}\int_{\mcO}(\iota^{\a})''(\t^{\b}\ast X_{r}^{\ve,\d,n})(\t^{\b}\ast Be_{k})^{2}d\xi dr\nonumber 
\end{align}
Using \eqref{eq:noise_bound-1}, \eqref{eq:strong_soln_visc-1} and
dominated convergence, taking $\b\to0$ yields
\begin{align}
\E\int_{\mcO}\iota^{\a}(X_{t}^{\ve,\d,n})d\xi & =\E\int_{\mcO}\iota^{\a}(x_{0})d\xi\nonumber \\
 & +\E\int_{0}^{t}\int_{\mcO}(\iota^{\a})'(X_{r}^{\ve,\d,n})(\ve\D X_{r}^{\ve,\d,n}+\div\phi^{\d}\left(\nabla X_{r}^{\ve,\d,n}\right))\,d\xi dr\label{eq:approx_ito}\\
 & +\sum_{k=1}^{\infty}\E\int_{0}^{t}\int_{\mcO}(\iota^{\a})''(X_{r}^{\ve,\d,n})(Be_{k})^{2}d\xi dr\nonumber 
\end{align}
Since $\int_{\mcO}(\iota^{\a})'(X_{r}^{\ve,\d,n})(\ve\D X_{r}^{\ve,\d,n}+\div\phi^{\d}\left(\nabla X_{r}^{\ve,\d,n}\right))\,d\xi\le0$,
using \eqref{eq:iota_der}, this implies 
\begin{align*}
 & \E\int_{\mcO}\iota^{\a}(X_{t}^{\ve,\d,n})d\xi\le\E\int_{\mcO}\iota^{\a}(x_{0})d\xi+C\E\int_{0}^{t}\int_{\mcO}1+\iota^{\a}(X_{r}^{\ve,\d,n})dxdr.
\end{align*}
Hence, by Gronwall's Lemma 
\begin{align*}
\E\int_{\mcO}\iota^{\a}(X_{t}^{\ve,\d,n})d\xi & \lesssim\E\int_{\mcO}\iota^{\a}(x_{0})d\xi+1\lesssim\frac{1}{m}\E\|x_{0}\|_{m}^{m}+1.
\end{align*}
Taking the limit $\a\to0$ yields, by Fatou's Lemma and using dominated
convergence
\begin{align}
\frac{1}{m}\E\|X_{t}^{\ve,\d,n}\|_{m}^{m} & \lesssim\frac{1}{m}\E\|x_{0}\|_{m}^{m}+1.\label{eq:lm-first-bound}
\end{align}
Taking the limits $\d\to0$, $\ve\to0$, $n\to\infty$ subsequently
as in the proof of \cite[Theorem 3.1]{GT15} finishes the proof of
this step by Fatou's Lemma.

\textit{Step 2: }As in the proof of Lemma \ref{lem:general_lp_bound}
it is enough to prove \eqref{eq:local_lp_bound} for $x_{0}\in L^{\infty}(\O,\mcF_{0};L_{\operatorname{av}}^{\infty}(\mcO))$.
Hence, assume $x_{0}\in L^{\infty}(\O,\mcF_{0};L_{\operatorname{av}}^{\infty}(\mcO))$
from now on. By step one we have $\E\|X_{t}\|_{m}^{m}<\infty$ for
all $t\ge0$, $m\in\N$.

Taking the limit $\a\to0$ in \eqref{eq:approx_ito} yields, using
dominated convergence and \eqref{eq:iota_der}, \eqref{eq:lm-first-bound},
\begin{align*}
 & \frac{1}{m}\E\int_{\mcO}\|X_{t}^{\ve,\d,n}\|_{m}^{m}d\xi\\
 & \le\frac{1}{m}\E\int_{\mcO}\|x_{0}\|_{m}^{m}+\E\int_{0}^{t}\int_{\mcO}(X_{r}^{\ve,\d,n})^{[m-1]}(\ve\D X_{r}^{\ve,\d,n}+\div\phi^{\d}\left(\nabla X_{r}^{\ve,\d,n}\right))\,d\xi dr\\
 & +C\E\int_{0}^{t}\|X_{r}^{\ve,\d,n}\|_{m}^{m}dr+Ct.
\end{align*}
We observe that, for $v\in H^{2}$ with $\nabla v\cdot\nu=0$ on $\partial\mcO$,
\begin{align*}
\int_{\mcO}v^{[m-1]}\div\phi^{\d}\left(\nabla v\right)d\z & =-(m-1)\int_{\mcO}|v|^{m-2}\nabla v\cdot\phi^{\d}(\nabla v)d\z.
\end{align*}
Further, note that (since $\psi^{\d}(0)=0$)
\begin{align*}
a\cdot\phi^{\d}(a) & =a\cdot\partial\psi^{\d}(a)\\
 & \ge\psi^{\d}(a)\quad\forall a\in\R^{d}
\end{align*}
and (cf. \cite[Appendix A]{GT15})
\begin{align*}
|\psi^{\d}(a)-\psi(a)| & \lesssim\d(1+\psi(a))\quad\forall a\in\R^{d}.
\end{align*}
This implies 
\[
a\cdot\phi^{\d}(a)\ge c\psi(a)-C\d.
\]
and thus, for $\d>0$ small enough,
\begin{align*}
 & \int_{\mcO}(X_{r}^{\ve,\d,n})^{[m-1]}\div\phi^{\d}\left(\nabla X_{r}^{\ve,\d,n}\right)d\z\\
 & =-(m-1)\int_{\mcO}|X_{r}^{\ve,\d,n}|^{m-2}\nabla X_{r}^{\ve,\d,n}\cdot\phi^{\d}\left(\nabla X_{r}^{\ve,\d,n}\right)d\z\\
 & \le-c(m-1)\int_{\mcO}|X_{r}^{\ve,\d,n}|^{m-2}\psi(\nabla X_{r}^{\ve,\d,n})d\z+C(m-1)\d\int_{\mcO}|X_{r}^{\ve,\d,n}|^{m-2}d\z.
\end{align*}
For $u\in W_{\operatorname{av}}^{1,\infty}$ we observe that
\begin{align*}
\int_{\mcO}|u|^{m-2}\psi(\nabla u)d\z & =\frac{1}{p}\int_{\mcO}|u|^{m-2}|\nabla u|^{p}d\z\\
 & =\frac{1}{p}\int_{\mcO}||u|^{\frac{m-2}{p}}\nabla u|^{p}d\z\\
 & =c\int_{\mcO}|\nabla u{}^{[\frac{m-2+p}{p}]}|^{p}d\z,
\end{align*}
for some generic constant $c=c(m,p)>0$. By Poincar\'{e}'s inequality
we obtain that
\begin{align*}
\int_{\mcO}|u|^{m-2}\psi(\nabla u)d\z & \ge c\int_{\mcO}|u|^{p+m-2}d\z.
\end{align*}
By smooth approximation, this inequality remains true for all $u\in H_{\operatorname{av}}^{1}$
with $u\in\bigcap_{m\ge1}L^{m}$. Hence, using step one and \eqref{eq:X_av}
we conclude 
\begin{align*}
\E\int_{\mcO}|X_{r}^{\ve,\d,n}|^{m-2}\psi(\nabla X_{r}^{\ve,\d,n})d\z & \ge c\E\int_{\mcO}|X_{r}^{\ve,\d,n}|^{p+m-2}d\z.
\end{align*}
Using this above yields that
\begin{align*}
 & \frac{1}{m}\E\|X_{t}^{\ve,\d,n}\|_{m}^{m}+c\E\int_{0}^{t}\|X_{r}^{\ve,\d,n}\|_{p+m-2}^{p+m-2}dr\\
 & \le\frac{1}{m}\E\|x_{0}\|_{m}^{m}+C\E\int_{0}^{t}\|X_{r}^{\ve,\d,n}\|_{m}^{m}dr+Ct+C(m-1)\d\E\int_{\mcO}|X_{r}^{\ve,\d,n}|^{m-2}d\z.
\end{align*}
By step one 
\[
\d\E\int_{\mcO}|X_{r}^{\ve,\d,n}|^{m-2}d\z\to0
\]
for $\d\to0$. In conclusion, by Fatou's Lemma and \eqref{eq:convergence},
\begin{align*}
 & \frac{1}{m}\E\|X_{t}^{\ve,n}\|_{m}^{m}+c\int_{0}^{t}\E\|X_{r}^{\ve,n}\|_{p+m-2}^{p+m-2}dr\le\frac{1}{m}\E\|x_{0}\|_{m}^{m}+C\E\int_{0}^{t}\|X_{r}^{\ve,n}\|_{m}^{m}dr+Ct.
\end{align*}
An application of Gronwall's Lemma and then letting $\ve\to0$, $n\to\infty$
concludes the proof.
\end{proof}
The proof may now be concluded as in Section \ref{sec:ergodicity_nonlocal}.
For the readers convenience we give some details. Let $u$ be the
unique solution to 
\begin{align}
du_{t} & \in\div\left(|\nabla u_{t}|^{p-2}\nabla u_{t}\right)\,dt,\nonumber \\
|\nabla u_{t}|^{p-2}\nabla u_{t}\cdot\nu & \ni0\quad\mathrm{{on}}\;\partial\Ocal,\;t>0,\label{eq:singular_p_laplace-1-1}\\
u_{0} & =x_{0}\in H,\nonumber 
\end{align}

\begin{lem}
\label{lem:det_decay-1}Let $x_{0}\in H$ and $u$ be the corresponding
solution to \eqref{eq:singular_p_laplace-1-1}. Then there is a $C>0$
such that
\[
\|u_{t}\|_{2}^{2}\le\frac{C}{t}\|x_{0}\|_{m_{0}}^{m_{0}},
\]
where $m_{0}=4-p\in(2,3]$. 
\end{lem}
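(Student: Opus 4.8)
The plan is to mimic, essentially verbatim, the proof of Lemma \ref{lem:det_decay} from the nonlocal setting, now feeding in the \emph{local} $L^m$ cascade of Lemma \ref{lem:general_lp_bound-1} at the critical exponent $m=m_0=4-p$. First I would observe that if $x_0\notin L^{m_0}_{\operatorname{av}}(\mcO)$ the right-hand side is infinite and there is nothing to prove, so one may assume $x_0\in L^{m_0}_{\operatorname{av}}(\mcO)\subseteq H$. Since \eqref{eq:singular_p_laplace-1-1} is precisely the deterministic case $B\equiv0$ of \eqref{eq:singular_p_laplace-1}, all the noise contributions in the proof of Lemma \ref{lem:general_lp_bound-1} (the terms originating from $\sum_k\|Be_k\|_\infty^2$) vanish, so the additive constant there may be taken to be $C=0$.

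The decisive and only nontrivial point is the arithmetic identity $p+m_0-2=p+(4-p)-2=2$, which collapses the dissipation term of Lemma \ref{lem:general_lp_bound-1} into the $L^2$ norm. With $m=m_0$ and $C=0$ this gives
\[
\frac{1}{m_0}\|u_t\|_{m_0}^{m_0}+c\int_0^t\|u_r\|_2^2\,dr\le\frac{1}{m_0}\|x_0\|_{m_0}^{m_0},
\]
and in particular $t\mapsto\|u_t\|_{m_0}^{m_0}$ is non-increasing. Next I would use that $t\mapsto\|u_t\|_2^2$ is non-increasing as well; this follows either from the $L^2$-contraction \eqref{eq:local_contraction} tested against the stationary solution $0$ (constants lie in the kernel and the mean-zero representative is fixed), or simply by applying Lemma \ref{lem:general_lp_bound-1} once more with $m=2$, where $p+2-2=p$ forces $\frac12\|u_t\|_2^2$ to decrease.

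Monotonicity of $r\mapsto\|u_r\|_2^2$ then yields $\int_0^t\|u_r\|_2^2\,dr\ge t\,\|u_t\|_2^2$, so that
\[
c\,t\,\|u_t\|_2^2\le\frac{1}{m_0}\|x_0\|_{m_0}^{m_0},
\]
and dividing by $c\,t$ produces the claimed bound with $C=1/(c\,m_0)$. I expect no serious obstacle here: the entire argument is a short reduction once the $L^m$ cascade is available, the genuine work having already been carried out in establishing Lemma \ref{lem:general_lp_bound-1}. The only points deserving a line of justification are that $C=0$ in the deterministic regime and that $\|u_t\|_2^2$ is non-increasing, both of which are routine consequences of the monotone gradient-flow structure of \eqref{eq:singular_p_laplace-1-1}.
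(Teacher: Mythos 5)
Your proof is correct and takes essentially the same route as the paper: the paper's proof of this lemma simply states that, using Lemma \ref{lem:general_lp_bound-1}, the argument is analogous to Lemma \ref{lem:det_decay}, which is exactly the reduction you carry out (take $m=m_0$ so that $p+m_0-2=2$, drop the noise terms so the additive constant vanishes in the deterministic case, and use monotonicity of $t\mapsto\|u_t\|_2^2$ to bound the time integral from below by $t\|u_t\|_2^2$). Your two side remarks---that the claim is vacuous unless $x_0\in L_{\operatorname{av}}^{m_0}(\mcO)$, and the two alternative justifications for the decay of the $L^2$ norm---are sound and fill in details the paper leaves implicit.
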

\begin{proof}
Using Lemma \ref{lem:general_lp_bound-1}, the proof is analogous
to Lemma \ref{lem:det_decay}.
\end{proof}
\begin{lem}
\label{lem:concentration-1}Let $\ve>0$ and $x\in L^{m_{1}}(\O,\mcF_{0};L^{m_{1}}(\mcO))$
with $m_{1}=m_{0}+2-p\in(2,4]$, $m_{0}=4-p\in(2,3]$. Then there
is an $R=R(\ve)>0$ such that
\[
Q_{T}(x,B_{R}^{m_{0}}(0))\ge1-\ve
\]
for all $T\ge1$.
\end{lem}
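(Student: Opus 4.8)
The plan is to repeat essentially \emph{verbatim} the argument of Lemma \ref{lem:concentration} from the nonlocal case, the only change being that the a-priori cascade is now supplied by Lemma \ref{lem:general_lp_bound-1} in place of Lemma \ref{lem:general_lp_bound}. The key algebraic point, which I would record first, is that the exponents are tuned precisely so that the dissipation term at level $m_1$ reproduces the $L^{m_0}$-norm: since $m_1=m_0+2-p$ one has $p+m_1-2=m_0$. Consequently Lemma \ref{lem:general_lp_bound-1} applied with $m=m_1$ gives constants $c,C>0$ with
\[
\frac{1}{m_1}\E\|X_t^x\|_{m_1}^{m_1}+c\,\E\int_0^t\|X_r^x\|_{m_0}^{m_0}\,dr\le\frac{1}{m_1}\E\|x\|_{m_1}^{m_1}+tC\quad\forall t\ge0.
\]

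Next I would discard the nonnegative leading term and divide by $T$, so that for every $T\ge1$
\[
\frac{1}{T}\int_0^T\E\|X_r^x\|_{m_0}^{m_0}\,dr\le\frac{C}{T}\,\E\|x\|_{m_1}^{m_1}+C\le C\,\E\|x\|_{m_1}^{m_1}+C,
\]
where finiteness of the right-hand side uses $x\in L^{m_1}(\O,\mcF_0;L^{m_1}(\mcO))$. This time-averaged moment bound is the entire substance of the statement; it is also the only place the local structure of \eqref{eq:singular_p_laplace-1} enters, and it enters solely through the black box of Lemma \ref{lem:general_lp_bound-1}, so everything downstream is identical to the nonlocal argument.

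Finally I would convert this into concentration by Markov's inequality. Writing $Q_T(x,\cdot)=\frac1T\int_0^T P_r(x,\cdot)\,dr$ and estimating $P_r(x,B_R^{m_0}(0))\ge 1-R^{-1}\E\|X_r^x\|_{m_0}^{m_0}$, I obtain for all $T\ge1$
\[
Q_T(x,B_R^{m_0}(0))\ge 1-\frac{1}{R}\,\frac{1}{T}\int_0^T\E\|X_r^x\|_{m_0}^{m_0}\,dr\ge 1-\frac{C}{R}\,\E\|x\|_{m_1}^{m_1}-\frac{C}{R}.
\]
Since the right-hand side is independent of $T$, choosing $R=R(\ve)$ large enough makes it $\ge 1-\ve$ for all $T\ge1$, which is the assertion. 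I do not anticipate a genuine obstacle here: the lemma follows softly from Lemma \ref{lem:general_lp_bound-1} by Markov's inequality, and the only points demanding care are the exponent bookkeeping $p+m_1-2=m_0$ that makes the correct dissipative norm appear, and keeping the estimate uniform in $T\ge1$ by discarding the factor $1/T$ in front of the data term.
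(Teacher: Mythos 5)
Your proposal is correct and coincides with the paper's intended argument: the paper's own proof of this lemma reads simply ``Using Lemma \ref{lem:general_lp_bound-1}, the proof is analogous to Lemma \ref{lem:concentration}'', and what you wrote out is exactly that analogous argument (the cascade estimate at level $m=m_{1}$ with the exponent identity $p+m_{1}-2=m_{0}$, dropping the leading term, dividing by $T\ge1$, and Markov's inequality on $B_{R}^{m_{0}}(0)$). Nothing further is needed.
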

\begin{proof}
Using Lemma \ref{lem:general_lp_bound-1}, the proof is analogous
to Lemma \ref{lem:concentration}.
\end{proof}
\begin{lem}
\label{lem:compare_det-1}For each $T\ge0$, $\d>0$ we have
\[
\inf_{x\in B}\P\left(\sup_{t\in[0,T]}\|X_{t}^{x}-u_{t}^{x}\|_{H}^{2}\le\d\right)>0
\]
for all bounded sets $B\subseteq H$.
\end{lem}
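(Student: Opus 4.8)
The plan is to follow the scheme of Lemma~\ref{lem:compare_det}, transported to the local setting via the approximations of \cite[Theorem 3.1]{GT15}. Fix $T\ge0$ and a bounded set $B\subseteq H$, and write $A^{\varepsilon,\delta}(v):=\varepsilon\D v+\div\phi^\delta(\nabla v)$, which is the negative $L^2$-subgradient of the convex functional $v\mapsto\tfrac{\varepsilon}{2}\|\nabla v\|_2^2+\int_{\mcO}\psi^\delta(\nabla v)\,d\xi$; in particular $A^{\varepsilon,\delta}$ is monotone on $H$. Let $X^{\varepsilon,\delta,n}$ solve \eqref{eq:full_approx_SVI_constr-1} with datum $x_0^n$ and let $u^{\varepsilon,\delta,n}$ solve the same equation with $B\equiv0$ and the same datum, so that $X^{\varepsilon,\delta,n}\to X^{x}$ by \eqref{eq:convergence} and $u^{\varepsilon,\delta,n}\to u^{x}$ by its ($B\equiv0$) analogue. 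Setting $Y_t^{\varepsilon,\delta,n}:=X_t^{\varepsilon,\delta,n}-W_t^B$ we have, pathwise, $\tfrac{d}{dt}Y^{\varepsilon,\delta,n}=A^{\varepsilon,\delta}(Y^{\varepsilon,\delta,n}+W^B)$ with $Y_0^{\varepsilon,\delta,n}=u_0^{\varepsilon,\delta,n}=x_0^n$, and I would compare $Y^{\varepsilon,\delta,n}$ with $u^{\varepsilon,\delta,n}$.

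Writing $w_t:=X_t^{\varepsilon,\delta,n}-u_t^{\varepsilon,\delta,n}$ and using $Y_t-u_t=w_t-W_t^B$ together with monotonicity of $A^{\varepsilon,\delta}$, an integration by parts (legitimate since the strong solutions satisfy the Neumann flux condition $(\varepsilon\nabla(\cdot)+\phi^\delta(\nabla(\cdot)))\cdot\nu=0$ on $\partial\mcO$, and $W^B$ is smooth by $B\in L_2(H,H_{\operatorname{av}}^3)$) gives
\begin{align*}
\tfrac12\tfrac{d}{dt}\|Y_t-u_t\|_H^2 &= -\varepsilon\|\nabla w_t\|_2^2-M_t\\
&\quad+\varepsilon(\nabla W_t^B,\nabla w_t)_{L^2}+(\nabla W_t^B,\phi^\delta(\nabla X_t^{\varepsilon,\delta,n})-\phi^\delta(\nabla u_t^{\varepsilon,\delta,n}))_{L^2},
\end{align*}
where $M_t:=(\nabla w_t,\phi^\delta(\nabla X_t^{\varepsilon,\delta,n})-\phi^\delta(\nabla u_t^{\varepsilon,\delta,n}))_{L^2}\ge0$ is the dissipation coming from monotonicity of $\phi^\delta$. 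The viscous cross term is handled by completing the square, $\varepsilon(\nabla W_t^B,\nabla w_t)_{L^2}-\varepsilon\|\nabla w_t\|_2^2\le\tfrac{\varepsilon}{4}\|\nabla W_t^B\|_2^2$. The crucial point is the nonlinear cross term: for $p\in(1,2)$ the conjugate exponent satisfies $p'=p/(p-1)\ge2$, and the \cite{DPRRW06} inequality applied with exponent $p'$ yields, via convex duality (note $\phi^\delta=\nabla\psi^\delta$ and $(\psi^\delta)^*=\psi^*+\tfrac{\delta}{2}|\cdot|^2$, with $\nabla\psi^*(s)=|s|^{p'-2}s$), the $\delta$-uniform estimate $|\phi^\delta(a)-\phi^\delta(b)|^{p'}\le C\,(\phi^\delta(a)-\phi^\delta(b))\cdot(a-b)$. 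Hence, by H\"older and Young's inequality,
\[
(\nabla W_t^B,\phi^\delta(\nabla X_t^{\varepsilon,\delta,n})-\phi^\delta(\nabla u_t^{\varepsilon,\delta,n}))_{L^2}\le\|\nabla W_t^B\|_{L^p}(CM_t)^{1/p'}\le\tfrac12 M_t+C'\|\nabla W_t^B\|_{L^p}^p,
\]
so that $-M_t$ absorbs the nonlinear cross term. For $p=1$ one uses instead $|\phi^\delta|\le1$ and bounds the term directly by $2|\mcO|^{1/2}\|\nabla W_t^B\|_2$.

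Combining, and using $\varepsilon\le1$, all solution-dependent terms cancel and one is left with a differential inequality driven purely by the noise, $\tfrac12\tfrac{d}{dt}\|Y_t-u_t\|_H^2\le\tfrac14\|\nabla W_t^B\|_2^2+C'\|\nabla W_t^B\|_{L^p}^p$. Since $Y_0-u_0=0$, integrating gives $\sup_{t\le T}\|Y_t^{\varepsilon,\delta,n}-u_t^{\varepsilon,\delta,n}\|_H^2\le\int_0^T(\tfrac12\|\nabla W_r^B\|_2^2+2C'\|\nabla W_r^B\|_{L^p}^p)\,dr$, a bound \emph{independent} of $\varepsilon,\delta,n$ and of $x_0$. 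Now fix $\eta\in(0,1]$ and let $\Omega_\eta:=\{\sup_{t\le T}\|W_t^B\|_{H^3}<\eta\}$; as in Lemma~\ref{lem:compare_det}, the trace-class Wiener process $W^B$ in $H_{\operatorname{av}}^3$ gives $\P(\Omega_\eta)>0$, and there $\sup_{t\le T}\|Y_t^{\varepsilon,\delta,n}-u_t^{\varepsilon,\delta,n}\|_H^2\le CT(\eta^2+\eta^p)=:\rho(\eta)$, whence $\sup_{t\le T}\|X_t^{\varepsilon,\delta,n}-u_t^{\varepsilon,\delta,n}\|_H^2\le2\rho(\eta)+2\eta^2$. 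Passing to the limit $\delta\to0$, $\varepsilon\to0$, $n\to\infty$ along \eqref{eq:convergence}, and using that the convergence is a.s.\ uniform on $[0,T]$ along a subsequence, the same bound holds for $\sup_{t\le T}\|X_t^x-u_t^x\|_H^2$ on $\Omega_\eta$, for every $x\in H$. Choosing $\eta$ so small that $2\rho(\eta)+2\eta^2\le\d$ then yields $\inf_{x\in B}\P(\sup_{t\le T}\|X_t^x-u_t^x\|_H^2\le\d)\ge\P(\Omega_\eta)>0$.

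The main obstacle, and the genuinely new feature compared with the nonlocal Lemma~\ref{lem:compare_det}, is the control of the nonlinear cross term: the local operator $A^{\varepsilon,\delta}$ carries spatial derivatives and obeys no analogue of the $H$-bound \eqref{eq:A_growth}, so the crude Cauchy--Schwarz estimate used there fails. The resolution is to generate the dissipation $M_t$ from monotonicity and to reabsorb the cross term into it by means of the dual ($p'$-)monotonicity inequality above; the delicate point is to verify this inequality with a constant uniform in the Moreau--Yosida parameter $\delta$, which is exactly what the duality computation together with $p'\ge2$ provides. Everything else is a transcription of the nonlocal argument.
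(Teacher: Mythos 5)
Your argument is correct, but it is not the route the paper takes: the paper disposes of this lemma in a single line, citing the stability estimate \cite[Lemma 6.6]{GT15}, which already contains the required pathwise comparison of the stochastic and deterministic solutions. What you have done instead is re-prove that comparison from scratch by transporting the scheme of Lemma \ref{lem:compare_det} to the local setting, and your diagnosis of where the nonlocal argument breaks is accurate: the Cauchy--Schwarz step there relies on the linear growth bound \eqref{eq:A_growth} for $A$ on $H$, which has no local counterpart. Your replacement --- generating the dissipation $M_t=(\nabla w_t,\phi^{\delta}(\nabla X_t^{\varepsilon,\delta,n})-\phi^{\delta}(\nabla u_t^{\varepsilon,\delta,n}))_{L^2}\ge 0$ from monotonicity and reabsorbing the noise cross term into it via the $\delta$-uniform estimate $|\phi^{\delta}(a)-\phi^{\delta}(b)|^{p'}\le C\,(\phi^{\delta}(a)-\phi^{\delta}(b))\cdot(a-b)$ --- is sound: since $(\psi^{\delta})^{*}=\psi^{*}+\tfrac{\delta}{2}|\cdot|^{2}$, one has $a-b=\nabla\psi^{*}(A)-\nabla\psi^{*}(B)+\delta(A-B)$ with $A=\phi^{\delta}(a)$, $B=\phi^{\delta}(b)$, and the degenerate monotonicity inequality with exponent $p'\ge2$ then gives the claim with $C$ independent of $\delta$. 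Two small caveats: the inequality quoted from \cite{DPRRW06} is stated there (and in this paper) for scalars, whereas you need its vector-valued version for $A,B\in\R^{d}$ --- equally standard, but it should be invoked as such; and the pathwise differentiation and integration by parts should be flagged as holding for the strong solutions $X^{\varepsilon,\delta,n}$ a.e.\ in time, which is the same informal level of rigor the paper itself employs in Proposition \ref{prop:det:SVI}. As for what each approach buys: the citation is economical and leverages the stability theory of \cite{GT15}; your proof is self-contained, exhibits the precise absorption mechanism, and actually yields more than the statement asks, since the resulting bound $\sup_{t\le T}\|X_t^{x}-u_t^{x}\|_{H}^{2}\le 2\rho(\eta)+2\eta^{2}$ on $\Omega_{\eta}$ is independent of $x$ altogether, so the infimum may be taken over all of $H$ rather than over bounded sets --- a uniformity the nonlocal Lemma \ref{lem:compare_det} does not provide, its bound degrading with $\|x_0\|_{H}$.
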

\begin{proof}
Follows from \cite[Lemma 6.6]{GT15}.
\end{proof}
\begin{lem}
\label{lem:mass_at_0-1}Let $\ve>0$, $x\in L^{m_{1}}(\mcO)$ with
$m_{1}$ as before and $\d>0$. Then
\[
\liminf_{T\to\infty}Q_{T}(x,B_{\d}(0))>0.
\]
\end{lem}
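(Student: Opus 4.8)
The plan is to follow the argument for the nonlocal analogue Lemma \ref{lem:mass_at_0} line by line, substituting the three local counterparts for their nonlocal versions. The scheme is: concentrate the averaged occupation measures $Q_T(x,\cdot)$ on a fixed bounded $L^{m_0}$ ball, drive the deterministic flow started inside that ball into a small $H$-neighborhood of $0$ within a fixed finite time, transfer this to the stochastic flow with a \emph{uniform} positive probability via the comparison estimate, and finally reassemble the lower bound through a time-shift of the occupation measures.

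First I would invoke Lemma \ref{lem:concentration-1} to produce $R>0$ with $Q_T(x,B_R^{m_0}(0))\ge\tfrac12$ for all $T\ge1$. Next, Lemma \ref{lem:det_decay-1} gives $\|u_t^x\|_2^2\le\frac{C}{t}\|x\|_{m_0}^{m_0}\le\frac{C}{t}R$ for every $x\in B_R^{m_0}(0)$, so I can choose $T_0=T_0(R,\d)$ large enough that $\|u_{T_0}^x\|_H\le\tfrac{\d}{2}$ holds uniformly over the ball. Lemma \ref{lem:compare_det-1} then supplies, for this fixed $T_0$, a constant $\g=\g(\d,T_0)>0$ with
\[
P_{T_0}(x,B_\d(0))\ge\P\big(\|X_{T_0}^x-u_{T_0}^x\|_H\le\tfrac{\d}{2}\big)\ge\g
\]
for all $x\in B_R^{m_0}(0)$, the first inequality being the triangle inequality $\|X_{T_0}^x\|_H\le\|X_{T_0}^x-u_{T_0}^x\|_H+\|u_{T_0}^x\|_H\le\d$ on the relevant event.

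Finally I would assemble the bound following the idea from \cite{ESR12}: shifting the time variable by $T_0$ (which changes $\frac1T\int_0^T P_s\,ds$ only by an $O(T_0/T)$ term), invoking the Markov property to write $P_{s+T_0}(x,B_\d(0))=\int_H P_s(x,dz)\,P_{T_0}(z,B_\d(0))$, restricting the inner integral to $B_R^{m_0}(0)$, and applying the uniform lower bound $P_{T_0}(z,B_\d(0))\ge\g$ valid there. This gives
\[
\liminf_{T\to\infty}Q_T(x,B_\d(0))\ge\g\,\liminf_{T\to\infty}Q_T(x,B_R^{m_0}(0))\ge\frac{\g}{2}>0.
\]

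Compared with the nonlocal case, the argument is purely formal once the three input lemmas are available in the local setting; the genuine work is entirely contained in them. In particular, Lemmas \ref{lem:det_decay-1} and \ref{lem:concentration-1} rest on the $L^m$-cascade of Lemma \ref{lem:general_lp_bound-1}, while Lemma \ref{lem:compare_det-1} is imported from \cite[Lemma 6.6]{GT15}. Accordingly I do not expect any real obstacle here: the only point where the local structure of \eqref{eq:singular_p_laplace-1} enters is the pathwise comparison of the stochastic and deterministic flows, and that is already established.
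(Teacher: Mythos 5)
Your proposal is correct and coincides with the paper's proof, which literally reads ``Same as Lemma \ref{lem:mass_at_0}'': the argument is the nonlocal proof with Lemmas \ref{lem:concentration-1}, \ref{lem:det_decay-1} and \ref{lem:compare_det-1} substituted for their nonlocal counterparts, followed by the same time-shift/Markov-property assembly from \cite{ESR12}. Your writeup even tidies the paper's minor sloppiness about $\|u_{T_0}^x\|_H$ versus $\|u_{T_0}^x\|_H^2$ when choosing $T_0$.
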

\begin{proof}
Same as Lemma \ref{lem:mass_at_0}. 
\end{proof}

\begin{proof}[Proof of Theorem \ref{thm:local_ergodicity}]
 Same as Theorem \ref{thm:nonlocal_ergodicity}.
\end{proof}

\section{Convergence of solutions: Non-local to local\label{sec:to_local_sol}}

In this section we investigate the convergence of the solutions to
the stochastic nonlocal $p$-Laplace equation to solutions of the
stochastic (local) $p$-Laplace equation, under appropriate rescaling
of the kernel $J$. The convergence of the associated unique invariant
measures will be considered in Section \ref{secc:convergence_measures}
below.

In the following let $\Ocal\subset\R^{d}$ be a bounded, smooth domain
with convex boundary $\partial\mcO$ and let $J:\R^{d}\to\R$ be a
nonnegative continuous radial function with compact support, $J(0)>0$,
$\int_{\R^{d}}J(z)\,dz=1$ and $J(x)\ge J(y)$ for all $|x|\le|y|$.
Further, let $W$ be a cylindrical Wiener process on $H$ and $B\in L_{2}(H)$
symmetric with $B\in L_{2}(H,H_{\operatorname{av}}^{3})$. As above,
let $H:=L_{\operatorname{av}}^{2}(\mcO)$ and $S=H_{\operatorname{av}}^{1}(\mcO)$.

For $p\in(1,2)$, $\eps>0$, we consider the rescaled stochastic nonlocal
$p$-Laplace equations of the type 
\begin{align}
dX_{t}^{\ve} & =\left(\int_{\mcO}J^{\ve}(\cdot-\xi)|X_{t}^{\ve}(\xi)-X_{t}^{\ve}(\cdot)|^{p-2}(X_{t}^{\ve}(\xi)-X_{t}^{\ve}(\cdot))d\xi\right)dt+BdW_{t}\label{eq:nonlocal_plp-1-1-2}\\
X_{0}^{\ve} & =x_{0}\in L^{2}(\O,\mcF_{0};H)\nonumber 
\end{align}
with 
\[
J^{\ve}(\xi):=\frac{C_{J,p}}{\eps^{p+d}}J\left(\frac{\xi}{\eps}\right),\quad\xi\in\R^{d}
\]
and corresponding energy 
\[
\vp^{\ve}(u):=\frac{1}{2p}\int_{\Ocal}\int_{\Ocal}J^{\ve}\left(\xi-\zeta\right)\lrabs{u(\zeta)-u(\xi)}^{p}\,d\zeta d\xi,
\]
for $u\in L^{p}(\mcO)$, where 
\[
C_{J,p}^{-1}:=\frac{1}{2}\int_{\R^{d}}J(z)|z_{d}|^{p}\,dz.
\]

Furthermore, we set 
\[
\vp(u):=\begin{cases}
\frac{1}{p}\int_{\Ocal}\abs{\nabla u}^{p}\,d\xi, & \quad\text{if}\;\;u\in W^{1,p}(\Ocal),\\
+\infty, & \quad\text{if}\;\;u\in L^{p}(\Ocal)\setminus W^{1,p}(\Ocal).
\end{cases}
\]
By \cite[Theorem 4.1]{GT15}, for each $\ve>0$, there is a unique
SVI solution $X^{\ve}$ to the stochastic nonlocal $p$-Laplace equation
\begin{align}
dX_{t}^{\ve} & =-\partial_{L^{2}}\vp^{\ve}(X_{t}^{\ve})\,dt+BdW_{t},\label{eq:nonlocal_plp}\\
X_{0}^{\ve} & =x_{0}\nonumber 
\end{align}
and, by \cite[Theorem 3.1]{GT15}, there is a unique SVI solution
to the stochastic (local) $p$-Laplace equation
\begin{align}
dX_{t} & =-\partial_{L^{2}}\varphi(X_{t})\,dt+BdW_{t},\label{eq:local_plp}\\
X_{0} & =x_{0},\nonumber 
\end{align}
where $\partial_{L^{2}}\varphi$ denotes the $L^{2}$ subgradient
of $\vp$ restricted to $L^{2}$. In \cite[Section 5]{GT15} weak
convergence of $X^{\ve}\rightharpoonup X$ in $L^{2}([0,T]\times\O;H)$
has been shown. The aim of this section is to strengthen this to pointwise
in time weak convergence, that is, 
\begin{equation}
X_{t}^{\ve}\rightharpoonup X_{t}\quad\text{weakly in }H\text{ for all }t\ge0,\P\text{-a.s.}.\label{eq:weak_conv}
\end{equation}
This will be crucial in order to obtain the convergence of the associated
semigroups $P_{t}^{\ve}F(x)=\E F(X_{t}^{\ve,x})$ to $P_{t}F(x)=\E F(X_{t}^{x})$
for cylindrical functions $F\in\mcF C_{b}^{1}(H)$. 

The strategy to prove the pointwise weak convergence \eqref{eq:weak_conv}
is based on considering the transformed equations (cf.~Remark \ref{rem:transformation}
below)
\begin{equation}
\frac{{d}}{dt}Y_{t}=-\partial_{L^{2}}\vp(Y_{t}+W_{t}^{B}(\omega))=A(Y_{t}+W_{t}^{B}(\omega))\label{eq:det_eq_transformed-2}
\end{equation}
and
\begin{equation}
\frac{{d}}{dt}Y_{t}^{\ve}=-\partial_{L^{2}}\vp^{\ve}(Y_{t}^{\ve}+W_{t}^{B}(\omega))=A^{\ve}(Y_{t}^{\ve}+W_{t}^{B}(\omega))\label{eq:det_eq_transformed-1-1}
\end{equation}
and to prove the weak convergence $Y_{t}^{\ve}\rightharpoonup Y_{t}$
in $H$ for all $t\ge0$ and a.a.~$\o\in\O$. The advantage of considering
the transformed, random PDE \eqref{eq:det_eq_transformed-2} and \eqref{eq:det_eq_transformed-1-1}
is that $Y^{\ve},Y$ enjoy better time regularity properties than
$X^{\ve},X$ which may be used to deduce stronger convergence results. 
\begin{thm}
\label{thm:weakly-pointwise}Let $x_{0}\in L^{2}(\O,\mcF_{0};H)$
and $X^{\ve}$, $X$ be the unique solutions to \eqref{eq:nonlocal_plp},
\eqref{eq:local_plp} respectively. Then, for each sequence $\ve_{n}\to0$,
\[
X_{t}^{\ve_{n}}\rightharpoonup X_{t}\quad\text{for }n\to\infty
\]
weakly in $H$ for all \textup{$t\in[0,T],\P$}-a.s.\textup{.} In
particular,
\[
P_{t}^{\ve}F(x)\to P_{t}F(x)\quad\text{for }\ve\to0
\]
for all $F\in\mcF C_{b}^{1}(H)$, $t\in[0,T]$, $x\in H$. 
\end{thm}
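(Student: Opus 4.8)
The plan is to argue pathwise. I fix $\o$ in a set of full $\P$-measure on which $W^B_\cdot(\o)\in C([0,T];H_{\operatorname{av}}^{3})$ (possible since $B\in L_2(H,H_{\operatorname{av}}^3)$), and I prove the pointwise-in-$t$, weak-in-$H$ convergence $Y_t^{\ve_n}\rightharpoonup Y_t$ for the transformed equations \eqref{eq:det_eq_transformed-1-1}, \eqref{eq:det_eq_transformed-2}; the assertion for $X^{\ve_n}_t=Y^{\ve_n}_t+W^B_t$ then follows. The core is a generalized Arzel\`a--Ascoli argument: I establish that $\{Y^\ve\}_\ve$ is bounded in $L^\infty(0,T;H)$ and uniformly equicontinuous into $V^*$, where $V:=H_{\operatorname{av}}^{3}(\mcO)$, and I identify the limit via the weak $L^2([0,T]\times\O;H)$ convergence $X^\ve\rightharpoonup X$ already shown in \cite[Section 5]{GT15}.

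First I would derive the uniform a priori bound. Testing \eqref{eq:det_eq_transformed-1-1} with $Y_t^\ve$ and writing $Y_t^\ve=(Y_t^\ve+W_t^B)-W_t^B$, the diagonal term satisfies $(A^\ve(u),u)_H\le-c\vp^\ve(u)$ by the Euler relation for the $p$-homogeneous $\vp^\ve$, while the cross term is treated through the antisymmetric nonlocal form and Young's inequality, giving $|(W_t^B,A^\ve(u))_H|\le\eta\,\vp^\ve(u)+C_\eta\,\vp^\ve(W_t^B)$. The decisive point is the \emph{scale-invariant} upper bound $\vp^\ve(w)\le C\|\nabla w\|_p^p$, uniform in $\ve$, which follows from the rescaling $J^\ve=C_{J,p}\ve^{-(p+d)}J(\cdot/\ve)$ together with the comparison estimates of \cite{AVMRT10}; since $H_{\operatorname{av}}^3\hookrightarrow W^{1,p}$, this controls $\vp^\ve(W_t^B)\le C\|W_t^B\|_{H^3}^p$. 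Absorbing and integrating yields, uniformly in $\ve$ and for a.e.\ $\o$, both $\sup_{t\le T}\|Y_t^\ve\|_H\le C(\o)$ and $\int_0^T\vp^\ve(X_r^\ve)\,dr\le C(\o)$. Rigorously, these are obtained on the approximating level used to construct $X^\ve$ in \cite{GT15} and passed to the limit, precisely as in the proof of Lemma \ref{lem:general_lp_bound}.

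Next I would bound the time derivative in a weaker norm. For $\phi\in V$, the antisymmetric form and H\"older's inequality give $|(A^\ve(X_t^\ve),\phi)_H|\le C\,\vp^\ve(X_t^\ve)^{(p-1)/p}\vp^\ve(\phi)^{1/p}\le C\,\vp^\ve(X_t^\ve)^{(p-1)/p}\|\phi\|_V$, using once more the uniform control of $\vp^\ve(\phi)$. Hence $\|\dot Y_t^\ve\|_{V^*}\le C\,\vp^\ve(X_t^\ve)^{(p-1)/p}$, and since $p\in(1,2)$ gives $p'=p/(p-1)>1$, the bound on $\int_0^T\vp^\ve(X_r^\ve)\,dr$ shows that $\dot Y^\ve$ is bounded in $L^{p'}(0,T;V^*)$ uniformly in $\ve$, whence $\|Y_t^\ve-Y_s^\ve\|_{V^*}\le C|t-s|^{1/p}$. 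As $V=H_{\operatorname{av}}^3\hookrightarrow H$ is compact, so is $H\hookrightarrow V^*$; together with the $L^\infty(0,T;H)$ bound, Arzel\`a--Ascoli furnishes a subsequence with $Y^{\ve_n}\to\tilde Y$ in $C([0,T];V^*)$, and boundedness in $H$ upgrades this to $Y_t^{\ve_n}\rightharpoonup\tilde Y_t$ weakly in $H$ for every $t$.

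Finally I would identify $\tilde Y=Y$ and conclude. For a.e.\ $\o$ the weak $L^2$ convergence of \cite{GT15} gives $Y^\ve\rightharpoonup Y$ in $L^2(0,T;H)\hookrightarrow L^2(0,T;V^*)$, which, compared with the strong $C([0,T];V^*)$ limit, forces $\tilde Y=Y$ for all $t$. As the limit does not depend on the subsequence, the whole sequence satisfies $Y_t^{\ve_n}\rightharpoonup Y_t$ in $H$ for all $t$, hence $X_t^{\ve_n}\rightharpoonup X_t$. The semigroup statement then follows because any $F\in\mcF C_b^1(H)$ is bounded and sequentially weakly continuous, so $F(X_t^{\ve_n})\to F(X_t)$ $\P$-a.s.\ and dominated convergence gives $P_t^{\ve}F(x)=\E F(X_t^{\ve,x})\to\E F(X_t^x)=P_tF(x)$. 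I expect the main difficulty to be the two uniform-in-$\ve$ estimates of the second and third paragraphs: because the singular rescaling makes the operator norm of $A^\ve$ on $H$ blow up, everything rests on exploiting the antisymmetric nonlocal structure together with the scale-invariant comparison $\vp^\ve(\cdot)\lesssim\|\nabla\,\cdot\,\|_p^p$.
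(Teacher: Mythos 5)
Your overall architecture---passing to the transformed pathwise equations $Y^\ve=X^\ve-W^B$, deriving uniform-in-$\ve$ energy bounds from the antisymmetric nonlocal structure together with the scale-invariant comparison $\vp^\ve(\cdot)\lesssim\|\nabla\,\cdot\,\|_p^p$ (this is \eqref{eq:veps_norm_ineq}), extracting time-equicontinuity from the growth estimate \eqref{eq:An-growth}, and running an Arzel\`a--Ascoli compactness argument---is essentially the paper's Step 2; the paper applies Arzel\`a--Ascoli to the scalar family $t\mapsto(Y_t^n,v)_H$ for test functions $v\in W^{1,p}\cap H$ and then uses density of such $v$ in $H$, whereas you work in the vector-valued space $C([0,T];V^*)$ with $V=H_{\operatorname{av}}^3$; both variants are legitimate. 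The semigroup statement via weak sequential continuity of cylindrical functions and dominated convergence is also exactly the paper's Step 3.

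The genuine gap is in your identification step. You claim that ``for a.e.\ $\o$ the weak $L^2$ convergence of \cite{GT15} gives $Y^\ve\rightharpoonup Y$ in $L^2(0,T;H)$.'' What \cite[Section 5]{GT15} provides (and what the paper says it provides) is weak convergence $X^\ve\rightharpoonup X$ in $L^2([0,T]\times\O;H)$, i.e.\ jointly in $(t,\o)$. This does \emph{not} imply pathwise weak convergence in $L^2([0,T];H)$ for a.e.\ fixed $\o$, not even along a subsequence: if $X^\ve-X$ behaved like $g_\ve(\o)h(t)$ with Rademacher-type $g_\ve\rightharpoonup0$ weakly in $L^2(\O)$, $|g_\ve|=1$, then the difference is weakly null in the product space while pathwise it oscillates between $\pm h$ along every subsequence, for a.e.\ $\o$. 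Moreover, your Arzel\`a--Ascoli subsequence is $\o$-dependent, so you also cannot push the joint weak convergence through it; identifying the limit globally by dominated convergence would require an $\o$-\emph{independent} subsequence converging a.s., which pathwise precompactness alone does not produce. Hence you cannot conclude $\tilde Y(\o)=Y(\o)$, and the ``limit independent of the subsequence'' conclusion collapses. This is precisely the hole that the paper's Step 1 fills: it proves \emph{pathwise} weak $L^2([0,T];H)$ convergence of the whole sequence $Y^n(\o)$ by (a) establishing well-posedness of SVI solutions to the transformed local equation \eqref{eq:det_eq_transformed-2} (Definition \ref{def:SPDE_SVI-1}, Proposition \ref{prop:det:SVI}), (b) showing, via the pathwise bound \eqref{eq:Yn_SVI_2} and the Mosco convergence $\vp^n\to\vp$, that every pathwise subsequential weak limit is an SVI solution of \eqref{eq:det_eq_transformed-2}, and (c) invoking uniqueness of such solutions. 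To close your proof you need this pathwise identification machinery (or an equivalent); citing the $L^2([0,T]\times\O;H)$ result cannot substitute for it.
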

Motivated by \cite{GT15}, the general strategy of the proof of Theorem
\ref{thm:weakly-pointwise} is based on the SVI framework. Hence,
we first briefly sketch well-posedness of SVI solutions to \eqref{eq:det_eq_transformed-2}
and then proceed with the proof of Theorem \ref{thm:weakly-pointwise}. 
\begin{rem}
In this section, we restrict to the case $p\in(1,2)$ for simplicity
only. The interested reader will notice that the same arguments can
be applied in the case $p=1$ with only minor changes. The only difference
is the treatment of the nonlocal, transformed random PDE \eqref{eq:det_eq_transformed-1-1}.
In the case $p=1$ well-posedness of SVI solutions to \eqref{eq:det_eq_transformed-1-1}
has to be shown as a first step. This can be done following the same
arguments as in \cite[Section 3]{GT15}. 
\end{rem}

\subsection{SVI approach to the transformed equation \texorpdfstring{\eqref{eq:det_eq_transformed-2}}{}}

Let $H:=L_{\operatorname{av}}^{2}(\mcO)$, $S=H_{\operatorname{av}}^{1}(\mcO)$.
Without loss of generality we may assume $W^{B}(\o)\in C([0,T];H_{\operatorname{av}}^{3})$
for all $\o\in\O$. In analogy to \cite[Definition 2.1]{GT15} we
define
\begin{defn}
\label{def:SPDE_SVI-1}Let $x_{0}\in H$, $T>0.$ A map $Y\in L^{2}([0,T];H)$
is said to be an\emph{ SVI solution} to \eqref{eq:det_eq_transformed-2}
if 

\begin{enumerate}
\item {[}Regularity{]}
\begin{equation}
\esssup_{t\in[0,T]}\|Y_{t}\|_{H}^{2}+\int_{0}^{T}\vp(Y_{r}+W_{r}^{B})dr\le\|x_{0}\|_{H}^{2}+C,\label{eq:pathw_SVI_energy}
\end{equation}
for some constants $C>0.$
\item {[}Variational inequality{]} for each map $Z\in W^{1,2}([0,T];S)$
we have
\begin{align}
 & \|Y_{t}-Z_{t}\|_{H}^{2}+2\int_{0}^{t}\vp(Y_{r}+W_{r}^{B})\,dr\label{eq:SVI-3}\\
 & \le\E\|x_{0}-Z_{0}\|_{H}^{2}+2\int_{0}^{t}\vp(Z_{r}+W_{r}^{B})\,dr-2\int_{0}^{t}(G_{r},Y_{r}-Z_{r})_{H}\,dr,\nonumber 
\end{align}
for a.e.~$t\in[0,T]$, where $G:=\frac{d}{dt}Z$.
\end{enumerate}
If, in addition, $Y\in C([0,T];H)$ then $Y$ is said to be a (time-)
continuous SVI solution to \eqref{eq:det_eq_transformed-2}.
\end{defn}
\begin{prop}
\label{prop:det:SVI}For each $x_{0}\in H$, $\o\in\O$ there is a
unique time-continuous SVI solution $Y=Y(\o)$ to \eqref{eq:det_eq_transformed-2}.
The process $(t,\o)\mapsto Y_{t}(\o)$ is $\mcF_{t}$-progressively
measurable and the constant $C=C(\o)$ in \eqref{eq:pathw_SVI_energy}
satisfies $C\in L^{2}(\O)$.
\end{prop}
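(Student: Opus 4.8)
The plan is to view \eqref{eq:det_eq_transformed-2} as a \emph{pathwise}, deterministic gradient flow: for each fixed $\o$ it is the negative gradient flow of the convex, proper, lower semicontinuous energy $\vp$, driven by the continuous shift $W^B(\o)\in C([0,T];H_{\operatorname{av}}^3)$. I would construct the SVI solution exactly as in the proof of \cite[Theorem 3.1]{GT15}, but now for frozen $\o$. Concretely, replace $\vp$ by its Moreau--Yosida approximation $\vp^\d$, add a vanishing viscosity $\ve\D$ and smooth the datum to $x_0^n\in H_{\operatorname{av}}^1$, and solve the regularized random PDE
\[
\tfrac{d}{dt}Y_t^{\ve,\d,n}=\ve\D(Y_t^{\ve,\d,n}+W_t^B)+\div\,\phi^\d(\nabla(Y_t^{\ve,\d,n}+W_t^B)),\qquad Y_0^{\ve,\d,n}=x_0^n,
\]
which for each $\o$ is well posed by monotone operator theory, its drift being the negative gradient of a convex, coercive functional.

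The a priori bound is the clean part. Writing $X_t:=Y_t^{\ve,\d,n}+W_t^B$, testing the regularized equation against $Y_t^{\ve,\d,n}=X_t-W_t^B$ and using the subgradient inequality $(\partial\vp(X_t),X_t-W_t^B)\ge\vp(X_t)-\vp(W_t^B)$ (for the convex regularized energy, uniformly in the parameters) yields
\[
\tfrac12\tfrac{d}{dt}\|Y_t^{\ve,\d,n}\|_H^2+\vp(X_t)\le\vp(W_t^B).
\]
Integrating gives \eqref{eq:pathw_SVI_energy} uniformly in $\ve,\d,n$, with $C=C(\o)\lesssim\int_0^T\vp(W_r^B)\,dr\lesssim\sup_{t\in[0,T]}\|W_t^B\|_{H^3}^p$, where I used the embedding $H_{\operatorname{av}}^3\hookrightarrow W^{1,p}$ for $p\in(1,2)$. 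Passing to the limit $\d\to0$, $\ve\to0$, $n\to\infty$ as in \cite[proof of Theorem 3.1]{GT15}, using this uniform bound for weak-$*$ compactness in $L^\infty([0,T];H)$ together with the lower semicontinuity of $\vp$, produces a limit $Y(\o)$ satisfying the regularity estimate \eqref{eq:pathw_SVI_energy} and the variational inequality \eqref{eq:SVI-3} for every admissible $Z\in W^{1,2}([0,T];S)$; since the singular part of the dynamics has been absorbed into $W^B$, the time derivative $\tfrac{d}{dt}Y=-\partial_{L^2}\vp(Y+W^B)$ stays in $H$ and one obtains $Y\in C([0,T];H)$.

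For uniqueness I would follow the monotonicity argument of \cite[Section 3]{GT15}: given two time-continuous SVI solutions, one inserts (time-regularizations of) each as the competitor $Z$ in the variational inequality satisfied by the other; the convex energy contributions $\int_0^t\vp(\,\cdot\,+W_r^B)\,dr$ combine with the correct sign and Gronwall's lemma forces the two solutions to coincide. The very same estimate shows that the solution map $(x_0,W^B(\o))\mapsto Y(\o)$ is (Lipschitz-)continuous in the relevant topologies. Progressive measurability then follows: each approximation $(t,\o)\mapsto Y_t^{\ve,\d,n}(\o)$ is adapted (by causality $Y_t$ depends only on $x_0$ and $W^B|_{[0,t]}$) and continuous in $t$, hence $\mcF_t$-progressively measurable, and by uniqueness the whole family converges, so the limit $(t,\o)\mapsto Y_t(\o)$ inherits adaptedness and continuity and is again progressively measurable. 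Finally, $C\in L^2(\O)$ is immediate from $C(\o)\lesssim\sup_{t\in[0,T]}\|W_t^B\|_{H^3}^p$ and Fernique's theorem, as $W^B$ is a centered Gaussian (trace-class) process in $H_{\operatorname{av}}^3$, so all moments of $\sup_{t\in[0,T]}\|W_t^B\|_{H^3}$ are finite.

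The main obstacle is the uniqueness step, and specifically making the variational inequality \eqref{eq:SVI-3} usable as a comparison tool. A generic SVI solution lies only in $C([0,T];H)$ with $\int_0^T\vp(Y_r+W_r^B)\,dr<\infty$ and is \emph{not} in the admissible test class $W^{1,2}([0,T];S)$; to compare two solutions one must regularize each in time (and mollify in space) to produce an admissible competitor $Z$, and then verify that the regularization errors in the singular energy terms $\int_0^t\vp(\,\cdot\,+W_r^B)\,dr$ and in the pairing $\int_0^t(G_r,Y_r-Z_r)_H\,dr$ vanish in the limit. Once this pathwise well-posedness and continuous dependence are secured, the progressive measurability and the $L^2(\O)$-integrability of $C$ are comparatively routine.
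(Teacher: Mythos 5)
Your overall plan --- freeze $\o$, regularize via Moreau--Yosida plus vanishing viscosity plus smoothed data, derive the energy estimate by testing with $Y$, and pass to the limit --- is the same three-parameter scheme the paper uses, and your estimate for the constant $C(\o)$ (via $H_{\operatorname{av}}^{3}\hookrightarrow W^{1,p}$ and Gaussian integrability of $\sup_{t}\|W_{t}^{B}\|_{H^{3}}$) is fine. But there is a genuine gap at the heart of the limiting procedure. You compensate for compactness only with the $H$-level bound \eqref{eq:pathw_SVI_energy} and weak-$*$ compactness in $L^{\infty}([0,T];H)$. That is not what the paper does, and it is not enough: the paper's Step 1 establishes the \emph{uniform $\dot{H}^{1}$ bound} \eqref{eq:Y-energy-bound} for the approximants $Y^{\ve,\d,n}$, which requires the convexity of $\Ocal$ (via the dissipativity inequality \eqref{eq:liu-ref}, proved with the Yosida approximation of the Neumann Laplacian as in \cite[Example 7.11]{GT11}) and the noise regularity $B\in L_{2}(H,H_{\operatorname{av}}^{3})$ to control the cross term $(\div\phi^{\d}(\nabla(\cdot)),\D W_{t}^{B})_{H}$. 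This $\dot H^1$ bound is exactly what makes the paper's Step 3 work: the Cauchy estimate
\begin{equation*}
\|Y_{t}^{\ve,\d_{1},n}-Y_{t}^{\ve,\d_{2},n}\|_{H}^{2}\le C(\d_{1}+\d_{2})\Bigl(1+\int_{0}^{t}\|Y_{r}^{\ve,\d_{1},n}\|_{\dot{H}^{1}}^{2}dr+\int_{0}^{t}\|Y_{r}^{\ve,\d_{2},n}\|_{\dot{H}^{1}}^{2}dr+\int_{0}^{t}\|W_{r}^{B}\|_{\dot{H}^{1}}^{2}dr\Bigr),
\end{equation*}
which yields \emph{strong} convergence in $C([0,T];H)$ as $\d\to0$. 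Without it you get neither the time continuity of the limit nor a usable uniqueness argument. Your attempted shortcut for continuity --- ``the time derivative $\frac{d}{dt}Y=-\partial_{L^{2}}\vp(Y+W^{B})$ stays in $H$'' --- is false for $p\in(1,2)$: a generic SVI solution does not take values in the domain of the subgradient of the singular energy $\vp$ (this is precisely why the SVI formulation is needed), so continuity cannot be extracted from the equation pointwise.

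The uniqueness step you flag as the ``main obstacle'' is also resolved differently in the paper, and your proposed fix (mollifying generic solutions in time/space so they become admissible competitors $Z\in W^{1,2}([0,T];S)$, then controlling the regularization errors in the energy terms) is exactly the part that does not go through, because a mollified SVI solution has no equation and no a priori spatial regularity beyond $\int_{0}^{T}\vp(Y_{r}+W_{r}^{B})\,dr<\infty$. The argument inherited from \cite[Theorem 3.1]{GT15} avoids this entirely: one tests the variational inequality \eqref{eq:SVI-3} of an \emph{arbitrary} SVI solution against the smooth approximating solutions $Y^{\ve,\d,n}$ of \eqref{eq:det_eq_transformed-3} (which are admissible test functions), and uses their strong $C([0,T];H)$ convergence to the constructed limit to conclude that every SVI solution coincides with that limit. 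So uniqueness, too, rests on the $\dot{H}^{1}$ estimates and the strong convergence they provide --- the two ingredients your proposal omits.
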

\begin{proof}
The proof follows the same line of arguments as the proof of \cite[Theorem 3.1]{GT15}.
Hence, in the following we shall restrict to giving some details on
required modifications of the proof. For notational convenience we
set
\begin{align*}
\psi(z):= & \frac{1}{p}|z|^{p},\\
\phi(z):= & \partial\psi(z)
\end{align*}
and let $\psi^{\d}$ be the Moreau-Yosida approximation of $\psi$,
$\phi^{\d}:=\partial\psi^{\d}$. In analogy to \eqref{eq:full_approx_SVI_constr-1}
we consider the three step approximation
\begin{align}
\frac{{d}}{dt}Y_{t}^{\ve,\d,n} & =\ve\D Y_{t}^{\ve,\d,n}+\div\phi^{\d}\left(\nabla(Y_{t}^{\ve,\d,n}+W_{t}^{B}(\omega))\right)\label{eq:det_eq_transformed-3}\\
Y_{0}^{\ve,\d,n} & =x_{n}\in H_{\operatorname{av}}^{1}.\nonumber 
\end{align}
Existence and uniqueness of a variational solution $Y^{\ve,\d,n}$
to \eqref{eq:det_eq_transformed-3} follows easily from \cite[Theorem 4.2.4]{PR07}.
Progressive measurability of $(t,\o)\mapsto Y_{t}^{\ve,\d,n}(\o)$
follows as in \cite[proof of Theorem 1.1]{GLR11}.

\emph{Step 1:} The first step consists in proving the existence of
a strong solution to \eqref{eq:det_eq_transformed-3} and corresponding
(uniform) energy bounds as in \eqref{eq:strong_soln_visc-1}. We restrict
to an informal derivation of these estimates, the rigorous justification
proceeds as in \cite[Theorem 3.1]{GT15}. We set $\|v\|_{\dot{H}^{1}}^{2}:=\|\nabla v\|_{2}^{2}$
for $v\in H^{1}$. Informally, we compute
\begin{align*}
 & \frac{{d}}{dt}\|Y_{t}^{\ve,\d,n}\|_{\dot{H}^{1}}^{2}\\
 & =-2(\ve\D Y_{t}^{\ve,\d,n}+\div\phi^{\d}\left(\nabla(Y_{t}^{\ve,\d,n}+W_{t}^{B}(\omega))\right),\D Y_{t}^{\ve,\d,n})_{H}\\
 & =-2\ve\|\D Y_{t}^{\ve,\d,n}\|_{H}^{2}-2(\div\phi^{\d}\left(\nabla(Y_{t}^{\ve,\d,n}+W_{t}^{B}(\omega))\right),\D(Y_{t}^{\ve,\d,n}+W_{t}^{B}(\omega)))_{H}\\
 & +2(\div\phi^{\d}\left(\nabla(Y_{t}^{\ve,\d,n}+W_{t}^{B}(\omega))\right),\D W_{t}^{B}(\omega))_{H}.
\end{align*}
For $v\in H^{2}$ with $\nabla v\cdot\nu=0$ on $\partial\mcO$,
arguing as in \cite[Example 7.11]{GT11}, we obtain that 
\begin{align}
-(v,\div\phi^{\d}(\nabla v))_{\dot{H}^{1}} & =-(-\D v,\div\phi^{\d}(\nabla v))_{2}\nonumber \\
 & =-\lim_{n\to\infty}(T_{n}v,\div\phi^{\d}(\nabla v))_{2}\nonumber \\
 & =-\lim_{n\to\infty}(nu-nJ_{n}u,\div\phi^{\d}(\nabla v))_{2}\label{eq:liu-ref}\\
 & \le\lim_{n\to\infty}n\left(\int_{\mcO}\psi^{\d}(\nabla J_{n}u)d\xi-\int_{\mcO}\psi^{\d}(\nabla u)d\xi\right)\nonumber \\
 & \le0,\nonumber 
\end{align}
where $T_{n}$ is the Yosida-approximation and $J_{n}$ the resolvent
of the Neumann Laplacian $-\D$ on $L^{2}$. Here, the convexity of
$\Ocal$ is needed, see \cite{GT11} for details. We next observe
that
\begin{align*}
 & 2(\div\phi^{\d}\left(\nabla(Y_{t}^{\ve,\d,n}+W_{t}^{B}(\omega))\right),\D W_{t}^{B}(\omega))_{H}\\
 & =2\int_{\mcO}\div\phi^{\d}\left(\nabla(Y_{t}^{\ve,\d,n}+W_{t}^{B}(\omega))\right)\D W_{t}^{B}(\omega)d\xi\\
 & \le C\int_{\mcO}(1+|\nabla(Y_{t}^{\ve,\d,n}+W_{t}^{B}(\omega))|)|\nabla\D W_{t}^{B}(\omega)|d\xi\\
 & \le C\left(\|\nabla Y_{t}^{\ve,\d,n}\|_{H}^{2}+1+\|W_{t}^{B}(\omega)\|_{H^{3}}^{2}\right).
\end{align*}
Hence,
\begin{align*}
\frac{{d}}{dt}\|Y_{t}^{\ve,\d,n}\|_{\dot{H}^{1}}^{2} & \le-2\ve\|\D Y_{t}^{\ve,\d,n}\|_{H}^{2}+C(\|Y_{t}^{\ve,\d,n}\|_{\dot{H}^{1}}^{2}+1+\|W_{t}^{B}(\omega)\|_{H^{3}}).
\end{align*}
By Gronwall's lemma this implies
\begin{align}
 & \sup_{t\in[0,T]}\|Y_{t}^{\ve,\d,n}\|_{\dot{H}^{1}}^{2}+2\ve\int_{0}^{T}\|\D Y_{r}^{\ve,\d,n}\|_{H}^{2}dr\nonumber \\
 & \le C\left(\|x_{0}\|_{\dot{H}^{1}}^{2}+\int_{0}^{T}\|W_{r}^{B}(\omega)\|_{H^{3}}dr+1\right)\label{eq:Y-energy-bound}\\
 & \le C(\|x_{0}\|_{\dot{H}^{1}}^{2}+1),\nonumber 
\end{align}
which finishes the proof of the required energy bound. 

\emph{Step 2: }\textit{\emph{We next derive the variational inequality
\eqref{eq:SVI-3} and regularity estimate \eqref{eq:pathw_SVI_energy}
for $Y^{\ve,\d,n}$.}} By the chain-rule and since $\phi^{\d}=\partial\psi^{\d}$
we have that
\begin{align*}
 & \norm{Y_{t}^{\ve,\d,n}-Z_{t}}_{H}^{2}\\
= & \norm{x_{0}^{n}-Z_{0}}_{H}^{2}+2\int_{0}^{t}(\div\phi^{\d}\left(\nabla(Y_{r}^{\ve,\d,n}+W_{r}^{B}(\omega))\right)+\varepsilon\Delta Y_{r}^{\ve,\d,n}-G_{r},Y_{r}^{\ve,\d,n}-Z_{r})_{H}dr\\
= & \norm{x_{0}^{n}-Z_{0}}_{H}^{2}+2\int_{0}^{t}\int_{\mcO}\phi^{\d}\left(\nabla(Y_{r}^{\ve,\d,n}+W_{r}^{B}(\omega))\right)\cdot(\nabla Z_{r}-\nabla Y_{r}^{\ve,\d,n})d\xi dr\\
 & -2\varepsilon\int_{0}^{t}(\Delta Y_{r}^{\ve,\d,n},Z_{r}-Y_{r}^{\ve,\d,n})_{H}dr-2\int_{0}^{t}(G_{r},Y_{r}^{\ve,\d,n}-Z_{r})_{H}dr\\
\le & \norm{x_{0}^{n}-Z_{0}}_{H}^{2}-2\int_{0}^{t}\left(\int_{\mcO}\psi^{\d}\left(\nabla(Y_{r}^{\ve,\d,n}+W_{r}^{B}(\omega))\right)d\xi+\frac{\varepsilon}{2}\norm{Y_{r}^{\varepsilon,\delta,n}}_{\dot{H}^{1}}^{2}\right)dr\\
 & +\varepsilon\int_{0}^{t}\norm{Z_{r}}_{\dot{H}^{1}}^{2}dr+2\int_{0}^{t}\int_{\mcO}\psi^{\d}\left(\nabla(Z_{r}+W_{r}^{B}(\omega))\right)d\xi dr-2\int_{0}^{t}(G_{r},Y_{r}^{\ve,\d,n}-Z_{r})_{H}dr,
\end{align*}
for all $Z\in W^{1,2}([0,T];H)$ and $G:=\frac{d}{dt}Z$. In particular,
choosing $Z\equiv0$ we obtain that 
\begin{align*}
 & \norm{Y_{t}^{\ve,\d,n}}_{H}^{2}+2\int_{0}^{t}\int_{\mcO}\psi^{\d}\left(\nabla(Y_{r}^{\ve,\d,n}+W_{r}^{B}(\omega))\right)d\xi\,dr\\
 & \le\norm{x_{0}}_{H}^{2}+2\int_{0}^{t}\int_{\mcO}\psi^{\d}\left(\nabla W_{r}^{B}(\omega)\right)\,dr\\
 & \le\norm{x_{0}}_{H}^{2}+C(\o),
\end{align*}
with $C\in L^{2}(\O)$.

\emph{Step 3:} The next step is to take the limit $\d\to0$, i.e.~we
estimate
\begin{align*}
 & \frac{{d}}{dt}\|Y_{t}^{\ve,\d_{1},n}-Y_{t}^{\ve,\d_{2},n}\|_{H}^{2}\\
 & =2\ve(\D Y_{t}^{\ve,\d_{1},n}-\D Y_{t}^{\ve,\d_{2},n},Y_{t}^{\ve,\d_{1},n}-Y_{t}^{\ve,\d_{2},n})_{H}\\
 & +2(\div\phi^{\d_{1}}\left(\nabla(Y_{t}^{\ve,\d_{1},n}+W_{t}^{B}(\omega))\right)-\div\phi^{\d_{2}}\left(\nabla(Y_{t}^{\ve,\d_{2},n}+W_{t}^{B}(\omega))\right),Y_{t}^{\ve,\d_{1},n}-Y_{t}^{\ve,\d_{2},n})_{H}\\
 & \le2(\div\phi^{\d_{1}}\left(\nabla(Y_{t}^{\ve,\d_{1},n}+W_{t}^{B}(\omega))\right)-\div\phi^{\d_{2}}\left(\nabla(Y_{t}^{\ve,\d_{2},n}+W_{t}^{B}(\omega))\right),Y_{t}^{\ve,\d_{1},n}-Y_{t}^{\ve,\d_{2},n})_{H}.
\end{align*}
Using \cite[Appendix A, (A.6)]{GT15} we conclude that
\begin{align*}
 & \frac{{d}}{dt}\|Y_{t}^{\ve,\d_{1},n}-Y_{t}^{\ve,\d_{2},n}\|_{H}^{2}\\
 & \le C(\d_{1}+\d_{2})(1+\|Y_{t}^{\ve,\d_{1},n}\|_{\dot{H}^{1}}^{2}+\|Y_{t}^{\ve,\d_{2},n}\|_{\dot{H}^{1}}^{2}+\|W_{t}^{B}(\omega)\|_{\dot{H}^{1}}^{2}).
\end{align*}
Hence,
\begin{align*}
 & \|Y_{t}^{\ve,\d_{1},n}-Y_{t}^{\ve,\d_{2},n}\|_{H}^{2}\\
 & \le C(\d_{1}+\d_{2})(1+\int_{0}^{t}\|Y_{r}^{\ve,\d_{1},n}\|_{\dot{H}^{1}}^{2}dr+\int_{0}^{t}\|Y_{r}^{\ve,\d_{2},n}\|_{\dot{H}^{1}}^{2}dr+\int_{0}^{t}\|W_{r}^{B}(\omega)\|_{\dot{H}^{1}}^{2}dr)
\end{align*}
which, using \eqref{eq:Y-energy-bound}, implies convergence of $Y^{\ve,\d,n}$
in $C([0,T];H)$ for $\d\to0$. 

\emph{Step 4:} The limits $\ve\to0$, $n\to\infty$ can be justified
precisely as in the proof of \cite[Theorem 3.1]{GT15} and the proof
can be concluded as in \cite[Theorem 3.1]{GT15}. Progressive measurability
of $(t,\o)\mapsto Y_{t}(\o)$ follows from the respective property
of $Y^{\ve,\d,n}$.
\end{proof}
\begin{rem}
\label{rem:transformation}

\begin{enumerate}
\item Let $X$ be the unique SVI solution to \eqref{eq:singular_p_laplace-1}
and $Y$ be the unique SVI solution to \eqref{eq:det_eq_transformed-2}.
Then $X=Y+W^{B}$, $\P$-a.s..
\item Let $X^{\ve}$ be the unique SVI solution to \eqref{eq:nonlocal_plp-1-1-2}
and $Y^{\ve}$ be the unique variational solution to \eqref{eq:det_eq_transformed-2}
given by \cite[Theorem 4.2.4]{PR07}. Then $X^{\ve}=Y^{\ve}+W^{B}$,
$\P$-a.s..
\end{enumerate}
\end{rem}
\begin{proof}
\emph{(i):} If $Y$ is the unique SVI solution to \eqref{eq:det_eq_transformed-2},
then $\bar{X}:=Y+W^{B}$ is progressively measurable and taking the
expectation in \eqref{eq:pathw_SVI_energy} yields $\bar{X}\in L^{2}([0,T]\times\O;H)$.
It is then easy to see that $\bar{X}$ is an SVI solution to \eqref{eq:singular_p_laplace-1}.
Thus, uniqueness of SVI solutions implies $X=\bar{X}$, $\P$-a.s..
\emph{(ii): }Follows by the same proof as (i). In order to prove that
$\bar{X}^{\ve}$ is an SVI solution to \eqref{eq:nonlocal_plp-1-1-2}
see the proof of Theorem \ref{thm:weakly-pointwise} below, in particular
\eqref{eq:Yn_SVI_1}, \eqref{eq:Yn_SVI_2}.
\end{proof}

\subsection{Convergence of solutions}
\begin{proof}[Proof of Theorem \ref{thm:weakly-pointwise}]
Let $\ve_{n}\to0$ and set $Y^{n}:=Y^{\ve_{n}}$, $J^{n}=J^{\ve_{n}}$.

\textit{Step 1:} Weak convergence in $L^{2}([0,T];H)$

By \cite[Theorem 4.2.4]{PR07} there is a unique variational solution
$Y^{n}\in C([0,T];H)$ to \eqref{eq:det_eq_transformed-1-1} with
respect to the trivial Gelfand triple $V=H=L^{2}$. Indeed, it is
easy to see that $A^{n}$ satisfies the required hemicontinuity and
monotonicity on $L^{2}$. Concerning coercivity, using \cite[Lemma 6.5]{AVMRT10},
we note that
\begin{align}
 & _{V^{*}}\<A^{n}(u),u\>_{V}\nonumber \\
 & =-\frac{1}{2}\int_{\mcO}\int_{\mcO}J^{n}(\z-\xi)\phi(u(\xi)-u(\z))(u(\xi)-u(\z))\,d\xi\,d\z\label{eq:An-coerc}\\
 & =-\frac{1}{2}\int_{\mcO}\int_{\mcO}J^{n}(\z-\xi)|u(\xi)-u(\z)|^{p}\,d\xi\,d\z\nonumber \\
 & =-\frac{1}{2}\|u\|_{V_{n}}^{p}\nonumber \\
 & \le0,\nonumber 
\end{align}
for all $u\in L^{2}$, where we have set $V_{n}:=L_{J^{n}}^{p}\cap L_{\operatorname{av}}^{p}$.
Moreover, using \cite[Lemma 6.5]{AVMRT10} and H\"{o}lder's inequality,
we note that 
\begin{align}
(A^{n}(u),v)_{H}= & \int_{\Ocal}\int_{\Ocal}J^{n}(\z-\xi)|u(\xi)-u(\z)|^{p-2}(u(\xi)-u(\z))(v(\xi)-v(\z))\,d\xi\,d\z\nonumber \\
\le & \left(\int_{\Ocal}\int_{\Ocal}J^{n}(\z-\xi)|u(\xi)-u(\z)|^{p}\,d\xi\,d\z\right)^{\frac{p-1}{p}}\label{eq:An-growth}\\
 & \left(\int_{\Ocal}\int_{\Ocal}J^{n}(\z-\xi)|v(\xi)-v(\z)|^{p}\,d\xi\,d\z\right)^{\frac{1}{p}}\nonumber \\
\le & \|u\|_{V_{n}}^{p-1}\|v\|_{V_{n}}\quad\forall u,v\in L^{2}.\nonumber 
\end{align}
It is easy to see that $Y^{n}$ is an SVI solution to \eqref{eq:det_eq_transformed-1-1}:
Indeed, by the chain-rule we have that
\begin{align}
\norm{Y_{t}^{n}-Z_{t}}_{H}^{2} & \le\norm{x_{0}^{n}-Z_{0}}_{H}^{2}+2\int_{0}^{t}\vp^{n}(Z_{r}+W_{r}^{B})\,dr-2\int_{0}^{t}\vp^{n}(Y_{r}^{n}+W_{r}^{B})\,dr\label{eq:Yn_SVI_1}\\
 & -\int_{0}^{t}(G_{r},Y_{r}^{n}-Z_{r})dr,\nonumber 
\end{align}
for all $Z\in W^{1,2}([0,T];H)$ and $G:=\frac{d}{dt}Z$. In particular,
choosing $Z\equiv0$ we obtain that 
\begin{align*}
\norm{Y_{t}^{n}}_{H}^{2}+2\int_{0}^{t}\vp^{n}(Y_{r}^{n}+W_{r}^{B})\,dr & \le\norm{x_{0}}_{H}^{2}+2\int_{0}^{t}\vp^{n}(W_{r}^{B})\,dr.
\end{align*}
By \cite[p. 24, first equation]{GT15} we have 
\[
\vp^{n}(v)\le C\|v\|_{\dot{W}^{1,p}}\le C(1+\|v\|_{\dot{H}^{1}}^{2})\quad\forall v\in H_{\operatorname{av}}^{1}.
\]
Hence,
\begin{align}
\norm{Y_{t}^{n}}_{H}^{2}+2\int_{0}^{t}\vp^{n}(Y_{r}^{n}+W_{r}^{B})\,dr & \lesssim\norm{x_{0}}_{H}^{2}+\int_{0}^{t}\|W_{r}^{B}\|_{\dot{H}^{1}}^{2}\,dr+1.\label{eq:Yn_SVI_2}
\end{align}
and we may extract a subsequence (again denoted by $Y^{n}$) such
that
\[
Y^{n}\rightharpoonup Y\quad\text{in }L^{2}([0,T];H).
\]
Using the Mosco convergence of $\vp^{n}\to\vp$ and $\limsup_{n\to\infty}\vp^{n}(u)\le\vp(u)$
(cf.~\cite[Proposition 5.2]{GT15}) and Mosco convergence of integral
functionals (cf.~\cite[Appendix B]{GT15}), it is easy to see that
$Y$ is an SVI solution to \eqref{eq:det_eq_transformed-2}. Since
SVI solutions to \eqref{eq:det_eq_transformed-2} are unique by Proposition
\ref{prop:det:SVI} this implies weak convergence of the whole sequence
$Y^{n}$ to $Y$ in $L^{2}([0,T];H)$.

\textit{Step 2: }By the chain-rule, \eqref{eq:An-coerc} and \eqref{eq:An-growth}
we have that
\begin{align*}
\norm{Y_{t}^{n}}_{H}^{2}= & \norm{x_{0}}_{H}^{2}+2\int_{0}^{t}(A^{n}(Y_{r}^{n}+W_{r}^{B}(\omega)),Y_{r}^{n})_{H}\,dr\\
= & \norm{x_{0}}_{H}^{2}+2\int_{0}^{t}(A^{n}(Y_{r}^{n}+W_{r}^{B}(\omega)),Y_{r}^{n}+W_{r}^{B}(\omega))_{H}\,dr\\
 & -2\int_{0}^{t}(A^{n}(Y_{r}^{n}+W_{r}^{B}(\omega)),W_{r}^{B}(\omega))_{H}\,dr\\
\le & \norm{x_{0}}_{H}^{2}-2\int_{0}^{t}\|Y_{r}^{n}+W_{r}^{B}(\omega)\|_{V_{n}}^{p}\,dr\\
 & +2\int_{0}^{t}\|Y_{r}^{n}+W_{r}^{B}(\omega)\|_{V_{n}}^{p-1}\|W_{r}^{B}(\omega)\|_{V_{n}}\,dr\\
\le & \norm{x_{0}}_{H}^{2}-\int_{0}^{t}\|Y_{r}^{n}+W_{r}^{B}(\omega)\|_{V_{n}}^{p}\,dr+C\int_{0}^{t}\|W_{r}^{B}(\omega)\|_{V_{n}}^{p}\,dr.
\end{align*}
Hence, using \eqref{eq:veps_norm_ineq},
\begin{equation}
\begin{split}\sup_{t\in[0,T]}\norm{Y_{t}^{n}}_{H}^{2}+\int_{0}^{T}\|Y_{r}^{n}+W_{r}^{B}(\omega)\|_{V_{n}}^{p}\,dr & \le\norm{x_{0}}_{H}^{2}+C\int_{0}^{T}\|W_{r}^{B}(\omega)\|_{H^{1}}^{p}\,dr.\end{split}
\label{eq:Yn-bound}
\end{equation}
We continue with an argument from \cite{G13-2}: Consider the set
\[
\Kcal:=\{(Y^{n},v)_{H}\;:\;v\in W^{1,p}\cap H,\;\norm{v}_{H}\vee\norm{v}_{W^{1,p}}\le1,\;n\in\N\}\subset C([0,T]).
\]
By \eqref{eq:Yn-bound}, $\Kcal$ is bounded in $C([0,T])$. Moreover,
by \eqref{eq:An-growth} and \eqref{eq:Yn-bound},
\begin{align*}
(Y_{t+s}^{n}-Y_{t}^{n},v)_{H} & =\int_{t}^{t+s}((Y^{n})_{r}^{\prime},v)_{H}\,dr\\
 & =\int_{t}^{t+s}(A^{n}(Y_{r}^{n}+W_{r}^{B}(\omega)),v)_{H}\,dr\\
 & \le\int_{t}^{t+s}\|Y_{r}^{n}+W_{r}^{B}(\omega)\|_{V_{n}}^{p-1}\|v\|_{V_{n}}\,dr\\
 & \le C{s}^{1/p}\int_{0}^{T}\|Y_{r}^{n}+W_{r}^{B}(\omega)\|_{V_{n}}^{p}dr\\
 & \le C{s}^{1/p},
\end{align*}
where we used \eqref{eq:veps_norm_ineq}. Hence, $\Kcal$ is a set
of equibounded, equicontinuous functions and thus is relatively compact
in $C([0,T])$ by the Arzel\`{a}-Ascoli theorem. Thus, for every $v\in W^{1,p}\cap H$,
$\norm{v}_{H}\vee\norm{v}_{W^{1,p}}\le1$, there is a subsequence
(again denoted by $Y^{n}$) such that $(Y^{n},v)_{H}\to g$ in $C([0,T])$.
Since also $(Y^{n},v)_{H}\to(Y,v)_{H}$ in $L^{2}([0,T])$ by step
one, we have $g=(Y,v)_{H}$. Hence, for each $v\in W^{1,p}\cap H$,
the whole sequence $(Y^{n},v)_{H}$ converges to $(Y,v)_{H}$ in $C([0,T])$. 

For $h\in H$, $\ve>0$ we can choose $v^{\ve}\in W^{1,p}\cap H$
such that $\|h-v^{\ve}\|\le\ve$. Then
\begin{align*}
(Y_{t}^{n}-Y_{t},h)_{H} & =(Y_{t}^{n}-Y_{t},h-v^{\ve})_{H}+(Y_{t}^{n}-Y_{t},v^{\ve})_{H}\\
 & \le\|Y_{t}^{n}-Y_{t}\|_{H}\|h-v^{\ve}\|_{H}+(Y_{t}^{n}-Y_{t},v^{\ve})_{H}\\
 & \le C\ve+(Y_{t}^{n}-Y_{t},v^{\ve})_{H}.
\end{align*}
Hence, choosing $n$ large enough implies
\begin{align*}
(Y_{t}^{n}-Y_{t},h)_{H} & \le C\ve\quad\forall n\ge n_{0}(\ve),
\end{align*}
that is $Y_{t}^{n}\rightharpoonup Y_{t}$ weakly in $H$ for $n\to\infty$. 

Since $Y^{n}=X^{n}-W^{B}$, $Y=X-W^{B}$, $\P$-a.s., this implies
weak convergence of $X_{t}^{n}$ to $X_{t}$ $\P$-a.s..

\textit{Step 3:} We next prove the convergence of the associated semigroups
$P_{t}^{\ve},P_{t}$. Let $F\in\mcF C_{b}^{1}(E)$ with $F=f(l_{1},\ldots,l_{k})$
and let $t\ge0$, $x\in H$. Further let $\ve_{n}\to0$ and set $P_{t}^{n}:=P_{t}^{\ve_{n}}$.
Then
\[
\begin{split}P_{t}^{n}F(x) & =\E F(X_{t}^{n,x})\\
 & =\E f(l_{1}(X_{t}^{n,x}),\ldots,l_{k}(X_{t}^{x,n}))\\
 & \to\E f(l_{1}(X_{t}^{x}),\ldots,l_{k}(X_{t}^{x}))\\
 & =\E F(X_{t}^{x})\\
 & =P_{t}F(x),
\end{split}
\]
as $n\to\infty$, by Lebesgue's dominated convergence theorem.
\end{proof}

\section{Convergence of invariant measures: Non-local to local\label{secc:convergence_measures}}

By Theorem \ref{thm:nonlocal_ergodicity}, for each $\ve>0$, there
exists a unique invariant measure $\mu^{\ve}$ to the stochastic nonlocal
$p$-Laplace equation \eqref{eq:nonlocal_plp} and by Theorem \ref{thm:local_ergodicity}
there is a unique invariant measure $\mu$ to the (local) stochastic
$p$-Laplace equation \eqref{eq:local_plp}. In this section we prove
weak$^{*}$ convergence of $\mu^{\ve}$ to $\mu$ in a suitable topology,
for $p\in(1,2)$.

Several difficulties appear, due to the nonlocal and singular-degenerate
nature of the SPDE \eqref{eq:nonlocal_plp}. First, we expect tightness
of $\mu^{\ve}$ on $H=L_{\operatorname{av}}^{2}$ only under stringent
dimensional restrictions. Indeed, for the (expected) limit $\mu$
we only know $\mu(W_{\operatorname{av}}^{1,p})=1$ which, roughly
speaking, would lead to assuming that the embedding $W_{\operatorname{av}}^{1,p}\hookrightarrow L_{\operatorname{av}}^{2}$
is compact and thus restrict to one spatial dimension, i.e.~$d=1$,
in general. Second, we only have weak convergence $X^{\ve}\rightharpoonup X$
in $H$. Therefore, the convergence of the associated semigroups $P_{t}^{n}F$
for general $F\in\Lip_{b}(H)$ is unclear; a crucial ingredient in
previously available methods.

The first problem is overcome in this section by considering weak
convergence of $\mu^{\ve}$ on $E=L_{\operatorname{av}}^{p}$ rather
than on $L_{\operatorname{av}}^{2}$. Again, for the limit $\mu$
we know $\mu(W_{\operatorname{av}}^{1,p})=1$. Hence, by compactness
of the embedding $W_{\operatorname{av}}^{1,p}\hookrightarrow L_{\operatorname{av}}^{p}$,
$\mu$ is concentrated on compact sets in $L_{\operatorname{av}}^{p}$
which suggests that tightness of $\mu^{\ve}$ on $L_{\operatorname{av}}^{p}$
should hold without restrictions on the dimension. Indeed, this is
established in Lemma \ref{prop:tightness} below. The resulting difficulty
of working with two topologies, weak$^{*}$ convergence of $\mu^{\ve}$
on $L_{\operatorname{av}}^{p}$ versus weak convergence of $X^{\ve}$
on $L_{\operatorname{av}}^{2}$ is solved in Lemma \ref{lem:weak-convergence-invariant-subsets}.
The second problem is overcome by first considering cylindrical functions
on $H$. For a cylindrical function $F$, weak convergence $X_{t}^{n}\rightharpoonup X_{t}$
in $H$ is enough to deduce $P_{t}^{n}F\to P_{t}F$. It turns out
that this is sufficient to deduce the weak$^{*}$ convergence $\mu^{\ve}\rightharpoonup^{*}\mu$
by means of a monotone class argument (cf.~proof of Theorem \ref{thm:convergence_ipm}).
The main result of this section is
\begin{thm}
\label{thm:convergence_ipm}Let $\mu^{\ve}$ be the unique invariant
measure to \eqref{eq:nonlocal_plp} and $\mu$ be the unique invariant
measure to \eqref{eq:local_plp}. Then $\mu^{\ve}\rightharpoonup^{*}\mu$
for $\ve\to0$ weakly$^{*}$ in the set of probability measures on
$L_{\operatorname{av}}^{p}(\Ocal)$, that is, for each bounded, Lipschitz
continuous function $F$ on $L_{\operatorname{av}}^{p}(\Ocal)$ we
have $\mu^{\ve}(F)\to\mu(F)$ for $\ve\to0$.
\end{thm}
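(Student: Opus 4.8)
The plan is to fix an arbitrary sequence $\ve_n\to 0$, extract from $\{\mu^{\ve_n}\}$ a weakly$^*$ convergent subsequence on $L^p_{\operatorname{av}}$, identify its limit with $\mu$, and close by the usual subsequence principle. First I would invoke Lemma \ref{prop:tightness}, which provides tightness of the family $\{\mu^{\ve}\}$ on $L^p_{\operatorname{av}}(\Ocal)$; by Prokhorov's theorem there is a subsequence (not relabelled) and a probability measure $\nu$ on $L^p_{\operatorname{av}}$ with $\mu^{\ve_n}\rightharpoonup^*\nu$. Since bounded Lipschitz functions are convergence-determining for narrow convergence on the separable space $L^p_{\operatorname{av}}$, it suffices to prove $\nu=\mu$: every subsequence then returns the same limit $\mu$, whence the whole family converges and $\mu^{\ve}(F)\to\mu(F)$ for every bounded Lipschitz $F$, which is the assertion.

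To locate the support of $\nu$ I would pass the energy bound \eqref{eq:ipm_mass_bound} of Theorem \ref{thm:nonlocal_ergodicity}, applied to the rescaled kernels $J^{\ve}$, to the limit. The bound $\int\vp^{\ve}\,d\mu^{\ve}\lesssim\|B\|_{L_2(H)}^2$ is uniform in $\ve$, and the $\liminf$-inequality of the Mosco convergence $\vp^{\ve}\to\vp$ together with the lower semicontinuity of the local energy $\vp$ on $L^p_{\operatorname{av}}$ (cf.~\cite[Proposition 5.2, Appendix B]{GT15}) yields $\int\vp\,d\nu\lesssim\|B\|_{L_2(H)}^2$. Hence $\nu$ is concentrated on $W^{1,p}_{\operatorname{av}}$, the effective domain of $\vp$. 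Reconciling the weak$^*$ topology of $L^p_{\operatorname{av}}$ carrying the measures with the weak topology of $H=L^2_{\operatorname{av}}$ carrying the processes is exactly the content of Lemma \ref{lem:weak-convergence-invariant-subsets}, which I would use at this point and in the limit passage below.

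Next I would show that $\nu$ is invariant for the local semigroup $P_t$. I test against cylindrical functions $F=f(l_1,\dots,l_k)$ with $f\in C_b^1(\R^k)$ and $l_1,\dots,l_k\in L^{p'}_{\operatorname{av}}$; since $p'>2$ on the bounded domain $\Ocal$ we have $L^{p'}_{\operatorname{av}}\subseteq H$, so such $F$ lie in $\mcF C_b^1(H)$ and are simultaneously bounded and Lipschitz on $L^p_{\operatorname{av}}$. Invariance of $\mu^{\ve_n}$ gives $\mu^{\ve_n}(F)=\mu^{\ve_n}(P_t^{\ve_n}F)$, whose left-hand side tends to $\nu(F)$ by weak$^*$ convergence. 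For the right-hand side I would combine three ingredients: by the $L^p$-contractivity of the rescaled nonlocal flows (the analogue of Lemma \ref{lem:e-property} for the kernels $J^{\ve}$), the maps $P_t^{\ve_n}F$ are Lipschitz on $L^p_{\operatorname{av}}$ with constant bounded uniformly in $n$ by the $L^p$-Lipschitz constant of $F$; by Theorem \ref{thm:weakly-pointwise}, $P_t^{\ve_n}F(x)\to P_tF(x)$ for every $x\in H$; and by the Arzel\`a--Ascoli theorem this convergence is uniform on $L^p$-compact sets. Together with the tightness of $\{\mu^{\ve_n}\}$ and Lemma \ref{lem:weak-convergence-invariant-subsets} this forces $\mu^{\ve_n}(P_t^{\ve_n}F)\to\nu(P_tF)$, whence $\nu(F)=\nu(P_tF)$. \emph{This limit passage is the main obstacle}: one must vary the integrand $P_t^{\ve_n}F$ and the measure $\mu^{\ve_n}$ simultaneously while having at one's disposal only weak $L^2$-convergence of the solutions and weak$^*$ $L^p$-convergence of the measures, objects living in genuinely different topologies; it is precisely the uniform $L^p$-equicontinuity of $\{P_t^{\ve_n}F\}$ that bridges them.

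Finally, since the cylindrical functions above separate points and generate the Borel $\sigma$-algebra of $L^p_{\operatorname{av}}$, a monotone class argument upgrades the identity $\nu(F)=\nu(P_tF)$ to all bounded Lipschitz $F$, so that $\nu$ is invariant for the local dynamics; as $\nu$ is carried by $W^{1,p}_{\operatorname{av}}$, the uniqueness part of Theorem \ref{thm:local_ergodicity}, together with the topology reconciliation of Lemma \ref{lem:weak-convergence-invariant-subsets}, identifies $\nu=\mu$. Invoking the subsequence principle from the first paragraph then completes the proof.
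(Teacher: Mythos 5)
Your proposal follows essentially the same route as the paper's proof: asymptotic tightness on $L_{\operatorname{av}}^{p}$ (Proposition \ref{prop:tightness}) yields a weakly$^{*}$ convergent subsequence $\mu^{\ve_n}\rightharpoonup^{*}\nu$; the uniform $e$-property from Lemma \ref{lem:e-property} and the pointwise semigroup convergence of Theorem \ref{thm:weakly-pointwise} are fed into Lemma \ref{lem:weak-convergence-invariant-subsets} to show that $\nu$ is invariant against cylindrical test functions; a monotone class argument upgrades this to genuine invariance for $P_t$; and uniqueness (Theorem \ref{thm:local_ergodicity}) plus the subsequence principle concludes. The deviations are cosmetic and harmless: you quote Prokhorov where the paper uses the asymptotic-tightness compactness result \cite[Theorem 1.3.9]{vdVW96}, you run the simultaneous limit in measure and integrand via Arzel\`a--Ascoli equicontinuity where the paper's Lemma \ref{lem:weak-convergence-invariant-subsets} does the same job with the bounded-Lipschitz metric, and your extra Mosco-based localization of $\nu$ on $W_{\operatorname{av}}^{1,p}$ is a supplementary observation not needed in the paper's version of the argument.
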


\subsection{Asymptotic invariance}

In this section we provide a general result on the convergence of
invariant measures for convergent semigroups. Compared to previous
results \cite{CiotToe2} the main novelty here is to work with two
distinct topologies corresponding to the convergence of the invariant
measures on the one hand and to the convergence of the semigroups
on the other hand.
\begin{defn}
Let $E$ be a Banach space, $\mcG\subseteq\mcB_{b}(E)$ be a set of
bounded, measurable functions on $E$ and $P_{t}$ be a semigroup
on $E$. Then, a probability measure $\mu$ on $E$ is said to be
$\mcG$-invariant if
\[
\int_{E}P_{t}G\,d\mu=\int_{E}G\,d\mu\quad\forall G\in\mcG.
\]
\end{defn}
\begin{lem}
\label{lem:weak-convergence-invariant-subsets}Let $E,H$ be Banach
spaces with $H\hookrightarrow E$ dense. Further let $\mcG\subset\operatorname{Lip}_{b}(E)$,
$P_{t}^{n}$, $P_{t}$ be Feller semigroups on $H$ and $\mu^{n}$
be $\mcG$-invariant probability measures for $P_{t}^{n}$, for all
$n\in\N$. Suppose that $\mu_{n}\rightharpoonup^{*}\mu$ as probability
measures on $E$, the semigroups $P_{t}^{n}$ satisfy a uniform $e$-property,
that is, there exists a $C>0$ such that for all $F\in\operatorname{Lip}_{b}(E)$,
$x,y\in E$
\[
\lrabs{P_{t}^{n}F(x)-P_{t}^{n}F(y)}\le C\operatorname{Lip}(F)\norm{x-y}_{E}\quad\forall n\in\N,t\ge0
\]
and that for every $G\in\mcG$, $t\ge0$, $x\in H$,
\[
\lim_{n\to\infty}P_{t}^{n}G(x)=P_{t}G(x).
\]
Then $\mu$ is $\mcG$-invariant, i.e. 
\[
\int_{E}P_{t}G\,d\mu=\int_{E}G\,d\mu\quad\text{for all }G\in\mcG,t\ge0.
\]
\end{lem}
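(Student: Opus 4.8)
The plan is to pass to the limit in the identity expressing $\mcG$-invariance of each $\mu^n$, namely $\int_E P_t^n G\,d\mu^n=\int_E G\,d\mu^n$, and to show that both sides converge to the corresponding expressions for $\mu$. The right-hand side is immediate: since $G\in\mcG\subset\Lip_b(E)\subset C_b(E)$ and $\mu^n\rightharpoonup^*\mu$ on $E$, we have $\int_E G\,d\mu^n\to\int_E G\,d\mu$. The whole difficulty lies in the left-hand side, where both the integrand $P_t^n G$ and the integrating measure $\mu^n$ vary with $n$.

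First I would record the regularity of the semigroup images. Each $P_t^n G$ is bounded by $\norm{G}_\infty$ (Markov property) and, by the uniform $e$-property, Lipschitz on $E$ with constant $\le C\Lip(G)$, uniformly in $n$ and $t$. Passing to the limit in this Lipschitz bound along the pointwise convergence $P_t^n G(x)\to P_t G(x)$ on the dense subspace $H$ shows that $P_t G$ is likewise Lipschitz on $E$ with the same constant; together with the bound $\norm{P_t G}_\infty\le\norm{G}_\infty$ this gives $P_t G\in\Lip_b(E)$. Moreover, equi-Lipschitz continuity upgrades the pointwise convergence from $H$ to all of $E$: for $x\in E$ and $x_k\in H$ with $x_k\to x$ in $E$, the splitting
\[
|P_t^n G(x)-P_t G(x)| \le 2C\Lip(G)\norm{x-x_k}_E + |P_t^n G(x_k)-P_t G(x_k)|
\]
gives $P_t^n G(x)\to P_t G(x)$ for every $x\in E$, by first choosing $k$ large and then $n$ large.

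Next I would treat the left-hand side by the decomposition
\[
\int_E P_t^n G\,d\mu^n - \int_E P_t G\,d\mu = \int_E (P_t^n G - P_t G)\,d\mu^n + \Big(\int_E P_t G\,d\mu^n - \int_E P_t G\,d\mu\Big).
\]
The second bracket tends to $0$ because $P_t G\in\Lip_b(E)\subset C_b(E)$ and $\mu^n\rightharpoonup^*\mu$. The first term is the main obstacle, since I have only pointwise convergence of the integrand tested against a \emph{moving} sequence of measures. To control it I would invoke tightness: as $E$ is a separable Banach space (in the application $E=L_{\operatorname{av}}^p(\Ocal)$) and $\mu^n\rightharpoonup^*\mu$, Prokhorov's theorem yields that $\{\mu^n\}$ is tight, so for $\eta>0$ there is a compact $K\subset E$ with $\mu^n(E\setminus K)\le\eta$ for all $n$. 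On $K$ the family $\{P_t^n G\}$ is equibounded and equi-Lipschitz, hence relatively compact in $C(K)$ by Arzel\`a--Ascoli; combined with the pointwise convergence established above, this forces $P_t^n G\to P_t G$ uniformly on $K$. Splitting the first term over $K$ and $E\setminus K$ and using $|P_t^n G|,|P_t G|\le\norm{G}_\infty$ gives
\[
\Big|\int_E (P_t^n G - P_t G)\,d\mu^n\Big| \le \sup_{K}|P_t^n G - P_t G| + 2\norm{G}_\infty\,\eta,
\]
so that $\limsup_{n\to\infty}$ of the left-hand side is $\le 2\norm{G}_\infty\eta$; letting $\eta\to0$ shows the first term vanishes.

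Combining the two terms, both sides of the invariance identity converge, and passing to the limit yields $\int_E P_t G\,d\mu=\int_E G\,d\mu$ for every $G\in\mcG$ and $t\ge0$, i.e.\ $\mcG$-invariance of $\mu$. The crux of the argument is the first term of the decomposition: the uniform $e$-property is precisely the hypothesis supplying the equicontinuity needed to convert pointwise convergence into uniform convergence on the compacts furnished by tightness, while the uniform sup bound controls the tails, and separability of $E$ (via Prokhorov) is what guarantees no mass escapes.
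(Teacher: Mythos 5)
Your proof is correct, but it takes a genuinely different route from the paper's at the decisive step. The paper splits the invariance defect into three terms,
\[
\lrabs{\int_{E}G\,d\mu-\int_{E}P_{t}G\,d\mu}\le\lrabs{\int_{E}G\,d(\mu-\mu_{n})}+\lrabs{\int_{E}P_{t}^{n}G\,d(\mu_{n}-\mu)}+\lrabs{\int_{E}(P_{t}^{n}G-P_{t}G)\,d\mu},
\]
and kills the middle (moving integrand against moving measure) term by a metric argument: the uniform $e$-property gives $\norm{P_{t}^{n}G}_{E,\infty}+\operatorname{Lip}_{E}(P_{t}^{n}G)\le\norm{G}_{E,\infty}+C\operatorname{Lip}_{E}(G)$ uniformly in $n$ and $t$, so this term is bounded by $\beta_{E}(\mu_{n},\mu)\bigl[\norm{G}_{E,\infty}+C\operatorname{Lip}_{E}(G)\bigr]$, where $\beta_{E}$ is the bounded Lipschitz metric, which metrizes weak$^{*}$ convergence on Polish spaces; the remaining term, with the fixed measure $\mu$, is then handled by dominated convergence using the claim $\mu(H)=1$. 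You instead use a two-term decomposition and control the cross term $\int_{E}(P_{t}^{n}G-P_{t}G)\,d\mu_{n}$ by uniform tightness of $\{\mu_{n}\}$ (Prokhorov) together with Arzel\`a--Ascoli, converting equi-Lipschitz continuity plus pointwise convergence into uniform convergence on compacts and using the uniform sup bound on the tails. Both arguments rest on the same two inputs, the uniform $e$-property and separability (Polishness) of $E$, which neither the lemma's statement nor either proof can dispense with, so neither route is more general in substance; the paper's is shorter and quantitative, yours is more elementary and self-contained. One genuine advantage of your version: by extending $P_{t}G$ to a bounded Lipschitz function on all of $E$ and upgrading the convergence $P_{t}^{n}G\to P_{t}G$ from the dense subspace $H$ to all of $E$, you never need the assertion $\mu(H)=1$; the paper deduces that assertion from $\mu_{n}(H)=1$ and weak$^{*}$ convergence, an inference that is not valid in general for a dense, non-closed subspace $H$ (consider $\mu_{n}=\delta_{x_{n}}$ with $x_{n}\in H$, $x_{n}\to x\in E\setminus H$), so your treatment is in this respect more careful and also gives the natural meaning to the integral $\int_{E}P_{t}G\,d\mu$ in the conclusion.
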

\begin{proof}
For two (Borel) probability measures $\nu_{1}$, $\nu_{2}$ on $(E,\mcB(E))$,
denote by $\beta_{E}(\nu_{1},\nu_{2})$ the bounded Lipschitz distance
between them, that is
\[
\beta_{E}(\nu_{1},\nu_{2}):=\sup\left\{ \lrabs{\int_{E}F\,d(\nu_{1}-\nu_{2})}\;:\;F\in\operatorname{Lip}_{b}(E),\;\norm{F}_{E,\infty}+\Lip_{E}(F)\le1\right\} .
\]
We have $ $$\operatorname{Lip}_{b}(E)\subseteq\operatorname{Lip}_{b}(H)$
and by continuous extension we can identify 
\[
\{F\in\operatorname{Lip}_{b}(H)\,:\,\exists C>0\ \text{s.t. }\|F(x)-F(y)\|_{E}\le C\|x-y\|_{E},\,\forall x,y\in E\}=\operatorname{Lip}_{b}(E).
\]
Accordingly, due to the $e$-property, $P_{t}^{n}:\operatorname{Lip}_{b}(E)\to\operatorname{Lip}_{b}(E)$.
Let $G\in\mcG$, $t\ge0$. We have that
\[
\begin{aligned} & \lrabs{\int_{E}G\,d\mu-\int_{E}P_{t}G\,d\mu}\\
 & \le\lrabs{\int_{E}G\,d\mu-\int_{E}P_{t}^{n}G\,d\mu_{n}}+\lrabs{\int_{E}P_{t}^{n}G\,d\mu_{n}-\int_{E}P_{t}^{n}G\,d\mu}\\
 & +\lrabs{\int_{E}P_{t}^{n}G\,d\mu-\int_{E}P_{t}G\,d\mu}.
\end{aligned}
\]
By the property of being $\mcG$-invariant measures, the first term
equals $\int_{E}G\,d(\mu-\mu_{n})$ and hence tends to zero as $n\to\infty$.
By the $e$-property, the second term can be bounded as follows (with
$\|F\|_{E,\infty}:=\sup_{x\in E}|F(x)|$)
\[
\beta_{E}(\mu_{n},\mu)\left[\norm{P_{t}^{n}G}_{E,\infty}+\operatorname{Lip}_{E}(P_{t}^{n}G)\right]\le\beta_{E}(\mu_{n},\mu_{0})\left[\norm{G}_{E,\infty}+C\operatorname{Lip}_{E}(G)\right]
\]
hence in turn tends to zero as $n\to\infty$ by weak convergence of
$\mu_{n}$ to $\mu$ and Lebesgue's dominated convergence, since (in
Polish spaces) the bounded Lipschitz metric generates the weak topology,
see e.g. \cite[1.12, pp. 73/74]{vdVW96}. Since $\mu^{n}(H)=1$ and
$\mu_{n}\rightharpoonup^{*}\mu$ we have $\mu(H)=1$. Thus, the third
term converges to zero by convergence of semigroups and Lebesgue's
dominated convergence theorem.
\end{proof}

\subsection{Tightness}

Below, taking complements of sets refers to the Polish space $E$,
that is, we denote $A^{c}:=E\setminus A$, for any set $A\subset E$.
\begin{defn}
A sequence of probability measures $\mu_{n}$ on a Polish space $E$
is called \emph{asymptotically tight}, if for each $\eta>0$ there
exists a compact set $K_{\eta}$ such that for each $\delta>0$ it
holds that
\[
\limsup_{n\to\infty}\mu_{n}((K_{\eta}^{\delta})^{c})<\eta,
\]
where $K_{\eta}^{\delta}\supset K_{\eta}$ is the open $\delta$-enlargement
of $K_{\eta}$.
\end{defn}
The next result can be found in \cite[Theorem 1.3.9]{vdVW96}.
\begin{lem}
If $\mu_{n}$ is asymptotically tight, then it is weakly relatively
compact.
\end{lem}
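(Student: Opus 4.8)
The statement is one direction of Prohorov's theorem, so the plan is to extract a weak-$\ast$ convergent subsequence by a soft compactness argument on a compactification of $E$, and then to use asymptotic tightness precisely to guarantee that no mass escapes, so that the limit is again a (tight) probability measure on $E$. Since the weak topology on $\mcP(E)$ is metrizable (by the bounded-Lipschitz metric $\beta_E$ already used above), relative compactness is the same as relative sequential compactness, and it suffices to produce, inside an arbitrary subsequence, a sub-subsequence converging weakly to some $\mu\in\mcP(E)$. To do so I would embed $E$, which is separable metric, homeomorphically via some $\iota$ into a compact metric space $\hat E$ (for instance the Hilbert cube), and push forward to $\hat\mu_n:=\iota_{\ast}\mu_n\in\mcP(\hat E)$. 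Since $\hat E$ is compact metric, $C(\hat E)$ is separable, so $\mcP(\hat E)$ is weak-$\ast$ sequentially compact by Banach--Alaoglu, and the limit is again a probability measure because $1\in C(\hat E)$; passing to a subsequence, $\hat\mu_{n_j}\rightharpoonup^{\ast}\nu$ for some $\nu\in\mcP(\hat E)$.

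The heart of the argument is to show $\nu(\iota(E))=1$. For each $m\in\N$, asymptotic tightness furnishes a compact $K_m\subseteq E$ with $\liminf_n\mu_n(K_m^{\delta})\ge 1-1/m$ for every $\delta>0$. Fix $\delta>0$ and set $F_{m,\delta}:=\overline{\iota(\overline{K_m^{\delta}})}\subseteq\hat E$, a compact (hence Borel) set. Since $\iota^{-1}(F_{m,\delta})\supseteq\overline{K_m^{\delta}}$, the Portmanteau inequality for closed sets yields $\nu(F_{m,\delta})\ge\limsup_j\hat\mu_{n_j}(F_{m,\delta})\ge\liminf_j\mu_{n_j}(K_m^{\delta})\ge 1-1/m$. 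Letting $\delta\downarrow0$ along a sequence, the sets $F_{m,\delta}$ decrease and $\bigcap_{\delta>0}F_{m,\delta}=\iota(K_m)$; by continuity of $\nu$ from above, $\nu(\iota(K_m))\ge 1-1/m$. Consequently $\nu$ is concentrated on the $\sigma$-compact set $\bigcup_m\iota(K_m)\subseteq\iota(E)$, so $\nu(\iota(E))=1$. As $\iota$ is a Borel isomorphism onto its image, $\mu:=(\iota^{-1})_{\ast}\nu$ is then a well-defined, \emph{tight} Borel probability measure on $E$.

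It remains to upgrade the convergence $\hat\mu_{n_j}\rightharpoonup^{\ast}\nu$ on $\hat E$ to $\mu_{n_j}\rightharpoonup\mu$ on $E$. Given $f\in C_b(E)$, I would fix $m$, extend $f|_{K_m}$ (uniformly continuous on the compact $K_m$) to some $g\in C(\hat E)$ by Tietze, apply $\int g\,d\hat\mu_{n_j}\to\int g\,d\nu$, and control the discrepancy between $\int f\,d\mu_{n_j}$ and $\int g\,d\hat\mu_{n_j}$, and likewise between $\int f\,d\mu$ and $\int g\,d\nu$, off $K_m^{\delta}$ by the mass bound $1/m$ plus a term vanishing as $\delta\downarrow0$; sending $m\to\infty$ closes the estimate. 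This produces the desired convergent sub-subsequence inside an arbitrary subsequence, hence weak relative compactness.

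The main obstacle is the concentration step $\nu(\iota(E))=1$: asymptotic tightness controls only the mass of the enlargements $K_m^{\delta}$, not of $K_m$ itself, and these enlargements are measured in the metric of $E$ whereas the limit $\nu$ lives in the \emph{different} topology of $\hat E$. Reconciling the two is exactly what forces the passage to closed enlargements together with the closed-set Portmanteau inequality, and the verification that $\bigcap_{\delta>0}F_{m,\delta}=\iota(K_m)$ with no spurious limit points in $\hat E\setminus\iota(E)$ is where the compactness of each $K_m$ is indispensable.
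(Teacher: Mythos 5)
Your proof is correct, but it necessarily takes a different route from the paper, because the paper does not prove this lemma at all: it simply cites \cite[Theorem 1.3.9]{vdVW96}, i.e.\ Prohorov's theorem in the asymptotic-tightness formulation of van der Vaart and Wellner. What you supply is a self-contained proof along the classical compactification lines: embed the Polish space $E$ homeomorphically into a compact metric space $\hat{E}$, extract a weak$^{*}$ limit $\nu$ of the pushforwards using compactness of $\mcP(\hat{E})$, and use asymptotic tightness to rule out escape of mass from $\iota(E)$. The two delicate points are handled correctly: first, the identity $\bigcap_{\delta>0}\overline{\iota(\overline{K_{m}^{\delta}})}=\iota(K_{m})$, which indeed hinges on compactness of $K_{m}$ (a point of the intersection is a $\hat{E}$-limit of points $\iota(x_{k})$ with $\dist(x_{k},K_{m})\to0$, and compactness forces a subsequence of the $x_{k}$ to converge in $E$ to a point of $K_{m}$, so the limit lies in $\iota(K_{m})$ and not in $\hat{E}\setminus\iota(E)$); second, the upgrade from weak$^{*}$ convergence on $\hat{E}$ to weak convergence on $E$, which works because a continuous $f$ and the Tietze extension $g$ of $f\circ\iota^{-1}|_{\iota(K_{m})}$ agree on $K_{m}$ and hence are uniformly close on the enlargement $K_{m}^{\delta}$ for $\delta$ small (continuity near a compact set), while the mass outside $K_{m}^{\delta}$ is asymptotically below $1/m$. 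One technical remark: for $\mu:=(\iota^{-1})_{*}\nu$ you invoke that $\iota$ is a Borel isomorphism onto its image (Lusin--Souslin); this can be avoided entirely by pushing forward the restriction of $\nu$ to the $\sigma$-compact carrier $S=\bigcup_{m}\iota(K_{m})$, on which $\iota^{-1}$ is continuous on each compact piece. What the paper's citation buys is generality and brevity --- the cited result covers nets of possibly non-measurable maps with outer probabilities --- whereas your argument buys a transparent, elementary proof tailored exactly to the Borel-measure/Polish-space setting in which the lemma is applied here.
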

Let $\mu^{\ve}$, $\ve>0$ be the unique invariant measure associated
to \eqref{eq:nonlocal_plp}.
\begin{prop}
\label{prop:tightness}Let $\ve_{n}\searrow0$ as $n\to\infty$ and
set $\mu_{n}:=\mu_{\ve_{n}}$. Then $\mu_{n}$ is asymptotically tight
on $E:=L_{\operatorname{av}}^{p}(\Ocal)$.
\end{prop}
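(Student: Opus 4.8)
The plan is to upgrade the uniform energy bound for the invariant measures, furnished by Theorem~\ref{thm:nonlocal_ergodicity}, into $L_{\operatorname{av}}^{p}$-precompactness by means of a Bourgain--Brezis--Mironescu/Ponce-type compactness principle for the rescaled nonlocal functionals $\vp^{\ve_n}$. First I would record a uniform energy bound. Applying \eqref{eq:ipm_mass_bound} with kernel $J^{\ve_n}$ and functional $\vp^{\ve_n}$, and inspecting Step~3 of the proof of Theorem~\ref{thm:nonlocal_ergodicity}, the constant there depends only on $\|B\|_{L_{2}(H)}$ through the identity $2(A^{\d}(v),v)_{H}=-p\vp^{\d}(v)$ of \cite[Lemma 6.5]{AVMRT10}, which is insensitive to the kernel; hence
\[
\int_{E}\vp^{\ve_n}(u)\,d\mu_n(u)\le M
\]
with $M=M(\|B\|_{L_{2}(H)})$ independent of $n$. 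I would combine this with the uniform nonlocal Poincar\'e inequality for the rescaled kernels, i.e.~$\|u\|_{p}^{p}\le C\,\vp^{\ve}(u)$ for $u\in L_{\operatorname{av}}^{p}(\Ocal)$ with $C$ independent of $\ve$ for $\ve$ small (cf.~\cite[Proposition 6.19]{AVMRT10} and \eqref{eq:av-zero}).

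Given $\eta>0$, I would then fix $R$ (depending only on $\eta$) so large that $M/R^{p}<\eta$, and set
\[
K_\eta:=\overline{\bigl\{u\in W_{\operatorname{av}}^{1,p}(\Ocal):\ \|u\|_{W^{1,p}}^{p}\le(C+p)R^{p}\bigr\}}^{\,L_{\operatorname{av}}^{p}},
\]
which is compact in $E=L_{\operatorname{av}}^{p}(\Ocal)$ by the Rellich--Kondrachov theorem. Writing $B_n:=\{u\in E:\ \vp^{\ve_n}(u)\le R^{p}\}$, Chebyshev's inequality and the uniform energy bound give $\mu_n(B_n^{c})\le M/R^{p}<\eta$ for every $n$, while on $B_n$ the uniform Poincar\'e inequality yields $\|u\|_{p}^{p}\le CR^{p}$.

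The core step is the inclusion: for each $\d>0$ there is an $N$ such that $B_n\subseteq K_\eta^{\d}$ for all $n\ge N$. I would argue by contradiction. If this failed, there would be a subsequence and $u_n\in B_n$ with $\dist_{L^{p}}(u_n,K_\eta)\ge\d$. By the previous paragraph $(u_n)$ is bounded in $L_{\operatorname{av}}^{p}$ and satisfies $\sup_n\vp^{\ve_n}(u_n)\le R^{p}$; the compactness theorem for the rescaled functionals (cf.~\cite{AVMRT10}) then forces $(u_n)$ to be relatively compact in $L^{p}$, with every limit point $u$ lying in $W_{\operatorname{av}}^{1,p}(\Ocal)$. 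The $\liminf$ half of the Mosco convergence $\vp^{\ve_n}\to\vp$ (cf.~\cite[Proposition 5.2]{GT15} and \cite[Appendix B]{GT15}) gives $\tfrac1p\|\nabla u\|_{p}^{p}=\vp(u)\le\liminf_n\vp^{\ve_n}(u_n)\le R^{p}$, and passing to the limit in $\|u_n\|_{p}^{p}\le CR^{p}$ gives $\|u\|_{p}^{p}\le CR^{p}$. Hence $\|u\|_{W^{1,p}}^{p}\le(C+p)R^{p}$, so $u\in K_\eta$; but then $\dist_{L^{p}}(u_n,K_\eta)\to0$ along the convergent subsequence, a contradiction.

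Combining the two estimates, for $n\ge N$ we have $(K_\eta^{\d})^{c}\subseteq B_n^{c}$, whence
\[
\limsup_{n\to\infty}\mu_n\bigl((K_\eta^{\d})^{c}\bigr)\le\limsup_{n\to\infty}\mu_n(B_n^{c})\le\frac{M}{R^{p}}<\eta,
\]
which is exactly asymptotic tightness; note that $K_\eta$ was chosen before $\d$, as the definition requires. The main obstacle is the compactness input: one must verify that a uniform bound on $\vp^{\ve_n}$ genuinely upgrades to $L^{p}$-precompactness, i.e.~that the Bourgain--Brezis--Mironescu compactness theorem applies to the kernels $J^{\ve_n}$ with the normalization $C_{J,p}$ built into $J^{\ve}$, together with the $\ve$-uniformity of the nonlocal Poincar\'e constant; both are precisely the rescaling features encoded in $J^{\ve}$.
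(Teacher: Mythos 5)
Your proof is correct and follows essentially the same route as the paper's: a uniform bound $\int\vp^{\ve_{n}}\,d\mu_{n}\lesssim\|B\|_{L_{2}(H)}^{2}$ from Theorem \ref{thm:nonlocal_ergodicity}, sublevel sets of the nonlocal energies shown (for large $n$, by contradiction combining the compactness theorem of \cite{AVMRT10} with the Mosco $\liminf$ inequality) to lie in the $\d$-enlargement of a fixed $L_{\operatorname{av}}^{p}$-compact set chosen independently of $\d$, and Chebyshev's inequality. The only cosmetic differences are that the paper takes $K_{\eta}$ to be a sublevel set of the local functional $\vp$ rather than the closure of a $W_{\operatorname{av}}^{1,p}$-ball, and leaves implicit the $L^{p}$-boundedness input to the compactness theorem, which you supply explicitly via the $\ve$-uniform nonlocal Poincar\'e inequality.
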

\begin{proof}
Let $\eta>0$ and $C:=\norm{B}_{L_{2}(H)}^{2}$. Recall $\vp_{\ve}\le K\vp$
for all $\ve\in(0,1]$ for some constant $K>0$. Then, by Poincar\'{e}'s
inequality, 
\[
K_{\eta}:=\left\{ x\in L_{\operatorname{av}}^{p}:\,\vp(x)\le\frac{2C}{\eta K}\right\} 
\]
 is a bounded set in $W_{\operatorname{av}}^{1,p}(\Ocal)$ and hence
compact in $L_{\operatorname{av}}^{p}(\Ocal)$. For $\d>0$, let $K_{\eta}^{\d}$
be the open $\d$-enlargement of $K_{\eta}$ in $E$. Let 
\[
G_{n}:=\left\{ x\in L_{\operatorname{av}}^{p}:\,\vp^{\ve_{n}}(x)\le\frac{2C}{\eta}\right\} 
\]
 for some $\ve_{n}\searrow0$, $n\to\infty$. Since $\vp_{\ve}\le K\vp$
for all $\ve\in(0,1]$, it holds that $G_{n}\supset K_{\eta}$ for
$n\in\N$. 

We claim that for each $\d>0$ there exists an $n_{0}\in\N$ such
that $G_{n}\subset K_{\eta}^{\d}$ for all $n\ge n_{0}$. We argue
by contradiction. If there exists $\d_{0}>0$, such that for all $n\in\N$
it holds that $G_{n}\not\subset K_{\eta}^{\d_{0}}$, then we can find
a sequence $x_{n}\in G_{n}\setminus K_{\eta}^{\d_{0}}$ such that
$\operatorname{dist}_{E}(x_{n},K_{\eta})\ge\d_{0}$ for every $n$.
By the definition of $G_{n}$ and \cite[Theorem 6.11 (2.)]{AVMRT10},
$\{x_{n}\}$ is relatively compact in $L_{\operatorname{av}}^{p}(\Ocal)$.
Hence, there exists a subsequence (denoted by $\{x_{n}\}$) such that
$x_{n}\to\bar{{x}}$ in $L_{\operatorname{av}}^{p}(\Ocal)$ and $\bar{x}\in W_{\operatorname{av}}^{1,p}(\Ocal)$.
By the Mosco convergence of $\vp_{\ve}\to\vp$ on $L^{p}$ (cf.~\cite[Proposition 5.2]{GT15})
we obtain that
\[
\vp(\bar{x})\le\liminf_{n\to\infty}\vp^{\ve_{n}}(x_{n})\le\frac{2C}{\eta},
\]
and thus $\bar{x}\in K_{\eta}$. Hence,
\[
\d_{0}\le\operatorname{dist}_{E}(x_{n},K_{\eta})\le\norm{\bar{x}-x_{n}}_{L_{\operatorname{av}}^{p}(\Ocal)}\underset{n\to\infty}{{\longrightarrow}}0,
\]
the desired contradiction.

Now, by Theorem \ref{thm:nonlocal_ergodicity}, for each $n\ge n_{0}(\d)$,
\[
\begin{aligned}\mu_{n}((K_{\eta}^{\d})^{c}) & \le\mu_{n}(G_{n}^{c})\le\frac{\eta}{2C}\int\vp^{\ve_{n}}(z)\,\mu_{n}(dz)\\
 & \le\frac{\eta}{2}<\eta.
\end{aligned}
\]
The proof is completed by taking the limsup as $n\to\infty$.
\end{proof}

\subsection{Proof of Theorem \ref{thm:convergence_ipm}}

We aim to apply Lemma \ref{lem:weak-convergence-invariant-subsets}
with $E=L_{\operatorname{av}}^{p}(\Ocal)$, $H=L_{\operatorname{av}}^{2}(\mcO)$.
Since $p\in(1,2)$, we have that $H\subseteq E$. Let $\mcG$ be the
space of cylindrical functions on $E$, that is, 
\[
\mcG=\mcF C_{b}^{1}(E).
\]
Let $\ve_{n}\to0$, set $\mu_{n}:=\mu_{\ve_{n}}$ and let $t\ge0$
be arbitrary, fixed. By Proposition \ref{prop:tightness} $\mu_{n}$
is asymptotically tight and thus has a weakly$^{*}$ convergent subsequence
(again denoted by $\mu_{n}$) such that $\mu_{n}\rightharpoonup\nu$.
The uniform \emph{$e$}-property for $P_{t}^{n}:=P_{t}^{\ve_{n}}$
on $E$ has been verified in Lemma \ref{lem:e-property} and by Theorem
\ref{thm:weakly-pointwise} we have
\[
P_{t}^{n}F(x)\to P_{t}F(x)\quad\text{for }n\to\infty
\]
for all $F\in\mcG$, $t\ge0$, $x\in H$. An application of Lemma
\ref{lem:weak-convergence-invariant-subsets} thus yields that $\nu$
is $\mcG$-invariant.

We show next that this implies that $\nu$ is an invariant measure
for $P_{t}$. First note that $\mcG$ is an algebra (w.r.t. pointwise
multiplication) of bounded real-valued functions on $E$ that contains
the constant functions. By \cite[II.3 a), p.54]{MR}, $\mcG$ separates
points of $E$, which by \cite[Theorem 6.8.9]{bogachev2007} implies
that $\mcG$ generates the Borel $\sigma$-algebra $\mcB(E)$. Set
\[
\mcH:=\mcH(\nu,t):=\left\{ F\in\mcB_{b}(E)\;:\;\int_{E}P_{t}F\,d\nu=\int_{E}F\,d\nu\right\} .
\]
Clearly, $1\in\mcH$ and $\mcH$ is closed under monotone convergence
by the Markov property and Beppo-Levi's monotone convergence lemma.
Further, $\mcH$ is closed under uniform convergence by Lebesgue's
dominated convergence theorem and the Markov property. We have already
shown $\mcG\subset\mcH$. Hence, by the monotone class theorem \cite[Theorem 2.12.9 (ii)]{bogachev2007},
$\mcB_{b}(E)\subset\mcH$ and therefore $\mcB_{b}(E)=\mcH$. Since
$\nu(H)=1$ this implies that $\nu$ is an invariant measure for $P_{t}$.
By Theorem \ref{thm:local_ergodicity} there is a unique invariant
measure $\mu$ for $P_{t}$. Thus $\mu=\nu$ and by uniqueness, the
whole sequence $\mu_{n}$ converges weakly$^{*}$ to $\mu$.

\appendix

\section{Notation\label{sec:Notation}}

We work with generic constants $C\ge0$, $c>0$ that are allowed to
change value from line to line and we write 
\[
A\lesssim B
\]
if there is a constant $C\ge0$ such that $A\le CB$. For a metric
space $(E,d)$, $R>0$, $x\in E$ we let $B_{R}(x)$ denote the open
ball of radius $R$ in $E$ centered at $x$. Moreover, we let $\mcB(E)$
denote the Borel sigma algebra and $\mcB_{b}(E)$ the space of bounded,
Borel-measurable functions on $E$. The $(d-1)$-dimensional unit
sphere in $\R^{d}$ is denoted by $S^{d-1}$. For notational convenience
we set 
\[
a^{[m]}:=|a|^{m-1}a\quad\text{for }a\in\R,m\ge0
\]
and
\[
|\text{\ensuremath{\xi}}|^{-1}\text{\ensuremath{\xi}}:=\begin{cases}
|\text{\ensuremath{\xi}}|^{-1}\text{\ensuremath{\xi}} & \text{if }\xi\in\R^{d}\setminus\{0\}\\
\bar{B}_{1}(0) & \text{if }\xi=0.
\end{cases}
\]

For $m\ge1$ we set $L^{m}(\mcO)$ to be the usual Lebesgue spaces
with norm $\|\cdot\|_{L^{m}}$ and we shall often use the shorthand
notation $L^{m}:=L^{m}(\mcO),\|\cdot\|_{m}:=\|\cdot\|_{L^{m}(\mcO)}$.
We let $B_{R}^{m}(x)$ be the open ball in $L^{m}$ of radius $R>0$
centered at $x\in L^{m}$. We further define $L_{\operatorname{av}}^{m}:=L_{\operatorname{av}}^{m}(\mcO)$
to be the space of all functions in $L^{m}$ with zero average, that
is,
\[
L_{\operatorname{av}}^{m}(\mcO):=\{v\in L^{m}(\mcO):\int_{\mcO}vd\xi=0\}
\]
and $H_{\operatorname{av}}^{k}:=H^{k}\cap L_{\operatorname{av}}^{2}$,
where $H^{k}$ are the usual Sobolev spaces. For a function $v\in L^{m}(\mcO)$
we define its extension to all of $\R^{d}$ by
\[
\bar{v}(\xi)=\begin{cases}
v(\xi) & \text{if }\xi\in\mcO\\
0 & \text{otherwise.}
\end{cases}
\]

Let $J:\R^{d}\to\R$ be a nonnegative, continuous, radial function
with compact support, $J(0)>0$, $\int_{\R^{d}}J(z)\,dz=1$. We then
consider the following nonlocal averaged Sobolev-type spaces: For
$\eps>0$, $m\ge1$, let $V_{\ve}:=L_{J^{\ve}}^{m}(\Ocal)$ be equal
to $L_{\operatorname{av}}^{m}(\Ocal)$ with the topology coming from
the norm
\begin{align*}
\norm{v}_{J^{\ve}}^{m} & :=\frac{C_{J,m}}{2m\eps^{d}}\int_{\Ocal}\int_{\Ocal}J\left(\frac{\xi-\zeta}{\eps}\right)\lrabs{\frac{v(\zeta)-v(\xi)}{\eps}}^{m}\,d\zeta\,d\xi\\
 & =\frac{C_{J,m}}{2m}\int_{\Ocal}\int_{\R^{d}}J(z)1_{\Ocal}(\xi+\eps z)\lrabs{\frac{\bar{v}(\xi+\eps z)-v(\xi)}{\eps}}^{m}\,dz\,d\xi,
\end{align*}
where $C_{J,m}$ is a normalization constant given by
\[
C_{J,m}^{-1}:=\frac{1}{2}\int_{\R^{d}}J(z)|z_{d}|^{m}\,dz.
\]
For notational convenience we set
\[
J^{\ve}(\xi):=\frac{C_{J,m}}{\eps^{d+m}}J\left(\frac{\xi}{\eps}\right)\quad\forall\xi\in\R^{d}.
\]
By \cite[Proposition 6.25]{AVMRT10} the norm $\norm{v}_{J^{\ve}}$
is equivalent to $\norm{v}_{m}$. In particular, $L_{J^{\ve}}^{m}(\Ocal)$
is a reflexive Banach space for all $m\in(1,\infty)$. Moreover, by
\cite{Brez2002},
\begin{equation}
\norm{\cdot}_{V_{\ve}}=\norm{\cdot}_{J^{\ve}}\le C\norm{\cdot}_{W^{1,m}},\label{eq:veps_norm_ineq}
\end{equation}
for some constant $C>0$ independent of $\ve>0$.

We say that a function $X\in L^{1}([0,T]\times\O;H)$ is $\F_{t}$-progressively
measurable if $X1_{[0,t]}\in L^{1}(\Bcal([0,t])\otimes\mcF_{t};H)$
for all $t\ge0$.

Let $E$ be a Banach space. For a Feller semigroup $P_{t}$ on $\mcB_{b}(E)$
we define the dual semigroup on the space of probability measures
$\mcM_{1}(E)$ on $E$ by
\[
P_{t}^{*}\mu(B):=\int_{E}P_{t}1_{B}(x)d\mu(x)
\]
and the time averages
\[
Q_{T}(x,B):=\frac{1}{T}\int_{0}^{T}P_{t}1_{B}(x)dt,\quad\forall B\in\mcB(E).
\]
We further set
\[
Q_{T}\mu(B):=\int_{E}Q_{T}(x,B)d\mu(x).
\]
We say that a probability measure $\mu$ on $E$ is invariant for
$P_{t}$ if $P_{t}^{*}\mu=\mu$ for all $t\ge0$. For an invariant
probability measure $\mu$ we define its basin of attraction by 
\[
\mcT(\mu):=\{x\in E:\,Q_{T}(x,\text{\textperiodcentered})=\frac{1}{T}\int_{0}^{T}P_{t}(x,\cdot)dt\rightharpoonup^{*}\mu\text{ for }T\to\infty\}\subset E.
\]
We say that $P_{t}$ satisfies the $e$-property if, for some constant
$C>0$,
\[
\|P_{t}F(x)-P_{t}F(y)\|_{E}\le C\Lip(F)\|x-y\|_{E}\quad\forall x,y\in\E,\,F\in\Lip(E).
\]

For a Banach space $E$ we define the space of cylindrical functions
on $E$ by 
\[
\mcF C_{b}^{1}(E):=\{f(l_{1},\ldots,l_{k})\,:\,k\in\N,\,l_{1},\ldots,l_{k}\in E^{\ast},\,f\in C_{b}^{1}(\R^{k})\}.
\]

\def\cprime{$'$}

\end{document}